\pgfplotsset{compat=newest}
\definecolor{ColMaWi}{rgb}{0.3372,0.6667,0.1098}
\newtheorem{theorem}{Theorem}[section]
\newtheorem{lemma}[theorem]{Lemma}
\newtheorem{proposition}[theorem]{Proposition}
\theoremstyle{definition}
\newtheorem{definition}[theorem]{Definition}
\newtheorem{example}[theorem]{Example}
\newtheorem{corollary}[theorem]{Corollary}
\newtheorem{conj}[theorem]{Conjecture}
\numberwithin{equation}{section}
\newcommand{\leqnomode}{\tagsleft@true}
\newcommand{\reqnomode}{\tagsleft@false}
\DeclareMathOperator{\argg}{arg}
\DeclareMathOperator{\Int}{Int}
\DeclareMathOperator{\Ext}{Ext}
\newcommand{\R}[1][]{\ensuremath{\mathbb{R}^#1}}
\newcommand{\C}{\ensuremath{\mathbb{C}}}
\newcommand{\Hol}{\ensuremath{\mathcal{O}}}
\newcommand{\N}{\ensuremath{\mathbb{N}}}
\renewcommand{\bf}{\bfseries}
\begin{document}

\reqnomode

%%%%%%%%%%%%%%%%%%%%%%%%%%%%%%%%%%%%%%%%%%%%%%%%%%%%%%%%%%%%
%%%%%%%%%%%%%%%%%%%%%%%%%%%%%%%%%%%%%%%%%%%%%%%%%%%%%%%%%%%%
\noindent                                             %%%%%%
\begin{picture}(150,36)                               %%%%%%
\put(5,20){\tiny{Submitted to}}                       %%%%%%
\put(5,7){\textbf{Topology Proceedings}}              %%%%%%
\put(0,0){\framebox(140,34)}                          %%%%%%
\put(2,2){\framebox(136,30)}                          %%%%%%
\end{picture}                                         %%%%%%
%%%%%%%%%%%%%%%%%%%%%%%%%%%%%%%%%%%%%%%%%%%%%%%%%%%%%%%%%%%%
%%%%%%%%%%%%%%%%%%%%%%%%%%%%%%%%%%%%%%%%%%%%%%%%%%%%%%%%%%%%

\vspace{0.5in}

\title[Basins and elliptic sectors in holomorphic flows]
{Basins of Equilibria and geometry of Global Sectors in Holomorphic Flows}

% First author:
\author{Nicolas Kainz}
\address{Institute of Numerical Mathematics, Ulm University, Germany}
\email{nicolas.kainz@uni-ulm.de}

% Second author:
\author{Dirk Lebiedz}
\address{Institute of Numerical Mathematics, Ulm University, Germany}
\email{dirk.lebiedz@uni-ulm.de}

% General info:
\subjclass[2020]{Primary 30A99, 30C10, 30C15, 30D30; Secondary 32M25, 37F10, 37F75}

\keywords{Holomorphic dynamical system, entire vector field, center, period annulus, focus, node, basin of attraction/stability, elliptic sector, canonical region}

\thanks{\textit{Acknowledgement:} We thank Jörn Dietrich for mathematical discussions and collaborative solution-finding. We thank the referee for helpful comments, suggestions, and additional literature references.}

\begin{abstract}
%In this follow-up paper we investigate the global topology and geometry of dynamical systems $\dot{x} = F(x)$ with entire vector field $F$ by exploiting and extending the local structure of simple and higher-order equilibria. We provide a step-by-step analysis to show several topological properties of the basins of centers, nodes, and foci, while excluding isolated equilibria at the boundaries of the latter two. We propose a definition of global elliptic sectors and introduce the notion of a sector-forming orbit based on the geometry within a finite elliptic decomposition in multiple equilibria. The structure of heteroclinic regions between two equilibria is characterized.
In this follow-up paper, we investigate the global geometry and topology of dynamical systems $\dot{x} = F(x)$ with entire vector field $F$, building on and constructively extending the local structure of simple and higher-order equilibria. We provide a step-by-step analysis to reveal topological properties of the basins of centers, nodes, and foci, while excluding isolated equilibria at the boundaries of the latter two. We propose a definition of global elliptic sectors and introduce the concept of sector-forming orbits based on the geometry within a finite elliptic decomposition of multiple equilibria. Finally, we characterize the structure of heteroclinic regions connecting two equilibria.
\end{abstract}

\maketitle

\section{\bf Introduction}
In our previous work \cite{kainz2024local}, we examined the local structure of simple and higher-order equilibria in holomorphic vector fields, i.e. in \textit{holomorphic dynamical systems} of the form
\begin{align}\label{eq:planarODE}
    \dot{x}=\frac{\mathrm{d}x}{\mathrm{d}t}=F(x),\quad x\in\C{},\;t\in\R{}
\end{align}
with $F\in\Hol(\C{})$. Building on these local insights, this follow-up paper shifts the perspective to the global phase portrait of entire vector fields and develops an understanding of how local structures influence and shape the global geometry of \eqref{eq:planarODE}. Throughout this paper, we use the terminology and notion of orbits and flows in the context of dynamical systems and the Jordan curve Theorem from \cite[Chapter 2]{kainz2024local}. In particular, if $\Gamma\subset\C{}$ is a closed Jordan curve, we denote the two connecting components resulting from the Jordan curve Theorem by $\Int(\Gamma)$ (the bounded interior of $\Gamma$) and $\Ext(\Gamma)$ (the unbounded exterior of $\Gamma$).

As examined in \cite{kainz2024local}, each equilibrium of \eqref{eq:planarODE} falls into one of the following categories:
\begin{itemize}
    \item[(i)] A \textbf{center} (simple equilibrium), where all orbits in a neighborhood are closed periodic orbits surrounding the center.
    \item[(ii)] A \textbf{focus} or \textbf{node} (simple equilibrium), which is attracting or repelling such that all nearby orbits tend to the equilibrium either in positive (attracting) or negative (repelling) time.
    \item[(iii)] A multiple equilibrium of order $m\in\mathbb{N}\setminus\{1\}$, possessing a \textbf{finite elliptic decomposition} of order $2m-2$. This geometric structure is defined in \cite[Definition 4.1]{kainz2024local} and illustrated in Section 3.1.
\end{itemize}

With this in mind, we focus on the following two central questions to reveal the relation between local and global dynamics:

\begin{itemize}
    \item[(A)] How can we globally define the "region of influence" of an equilibrium based on its known local structure, and what are the topological properties of the resulting set?  
    \item[(B)] How can these "regions of influence" and their geometric structure be embedded into the existing theory of canonical (strip, annular, radial, and spiral) regions in general smooth planar dynamical systems, cf. \cite{neumann1975classification,markus1954global,markus1954globalintegrals,neumann1976global}?
\end{itemize}

For each of the three cases (i)--(iii) described above, we provide a step-by-step analysis to address questions (A) and (B). In addition, we emphasize the significance and specific influence of the holomorphy of the vector field in \eqref{eq:planarODE} on the possible geometric configurations of the global phase space. This becomes apparent both in the results stated in theorems and in the arguments within proofs. In particular, the central tool is \cite[Corollary 5.1]{kainz2024local}, which can be formulated in the case of entire vector fields as follows:

\begin{theorem}[Poincaré-Bendixson for Holomorphic Flows]\label{thm:central_tool}
    Let $F\in\Hol{}(\C)$, $F\not\equiv 0$, be an entire function and $K\subset\C$ compact with $\xi\in K$ such that $\Gamma_{+(-)}(\xi)\subset K$. Then either $\Gamma(\xi)$ is a periodic orbit with exactly one equilibrium, a center, in its interior, or the positive (negative) limit set of $\Gamma(\xi)$ consists of exactly one equilibrium. Additionally, if $\Gamma(\xi)$ is periodic, the interior of $\Gamma(\xi)$ (except for the center) is filled entirely with periodic orbits all having the center in its interior.
\end{theorem}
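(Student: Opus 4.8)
The plan is to derive this statement from the classical Poincaré–Bendixson machinery in the plane, together with the rigidity that holomorphy imposes on the local phase portrait near equilibria — precisely the trichotomy (i)–(iii) recalled above. I would treat the positive-time case $\Gamma_+(\xi)\subset K$; the negative-time case follows by reversing time, which for a holomorphic field amounts to replacing $F$ by $-F$, still entire. Since $K$ is compact and forward-invariant under the orbit closure in the relevant sense, the $\omega$-limit set $\omega(\xi)$ is a nonempty, compact, connected subset of $K$. The zero set of $F$ is discrete (as $F\not\equiv 0$ is entire), so $K$ contains only finitely many equilibria; this is the key finiteness input that lets classical Poincaré–Bendixson apply and forces $\omega(\xi)$ to be either (a) a single equilibrium, (b) a periodic orbit, or (c) a graphic — a union of equilibria and connecting (homo/heteroclinic) orbits.

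The main work is excluding case (c) and analyzing case (b). For case (c): a graphic would require an equilibrium admitting a hyperbolic or parabolic sector — an orbit that is homoclinic, or a heteroclinic chain — but by the classification recalled in the excerpt, a simple equilibrium is either a center (only closed orbits nearby) or a focus/node (locally all orbits enter/leave), and a multiple equilibrium of order $m$ has a finite elliptic decomposition, i.e. its sectors are elliptic, not hyperbolic. I would argue that none of these local pictures supports the saddle-type connections a nontrivial graphic needs: near a center or a focus/node there are no separatrices forming a loop, and an elliptic sector consists of orbits homoclinic to the same equilibrium but filling a full sector rather than bounding one, so a would-be graphic through such a point is actually surrounded by genuine orbits, contradicting that $\omega(\xi)$ is a limit set disjoint from nearby orbits unless it is itself a periodic orbit or a point. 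Hence $\omega(\xi)$ is either a single equilibrium or a periodic orbit $\gamma$.

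If $\omega(\xi)=\gamma$ is a periodic orbit, then by the Jordan curve theorem $\Int(\gamma)$ is a bounded region contained in $K$ (using forward invariance and the fact that orbits cannot cross $\gamma$), so it contains at least one equilibrium, and the index argument — the index of a periodic orbit is $+1$, while for a holomorphic field each equilibrium has strictly positive index — shows the equilibria inside sum to index one; I would then use the local classification once more to see that a focus, node, or higher-order point cannot be the \emph{only} equilibrium enclosed by a periodic orbit of the flow (a focus/node would force spiralling, contradicting periodicity of nearby orbits; a multiple equilibrium would inject elliptic sectors whose boundary orbits cannot all be periodic), leaving a single center. Finally, to show $\Int(\gamma)\setminus\{\text{center}\}$ is filled with periodic orbits each enclosing the center, I would run the standard argument: pick any point $p$ in that region; its forward orbit stays in the compact set $\overline{\Int(\gamma)}$, so by the first part $\omega(p)$ is a point or a periodic orbit; it cannot be the center (a center is not an attractor — nearby orbits are closed) and it cannot be any other equilibrium (there are none), so $\omega(p)$ is a periodic orbit $\gamma'$, and symmetrically $\alpha(p)$ is a periodic orbit, forcing $\gamma'$ to equal the orbit of $p$ itself; a Jordan-curve argument then shows $\gamma'$ must enclose the center, since otherwise the nested region between $\gamma'$ and the center would contain an equilibrium-free annulus whose boundary dynamics are incompatible. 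The main obstacle I anticipate is making the exclusion of hyperbolic sectors fully rigorous in the higher-order case — i.e. verifying that the finite elliptic decomposition from \cite[Definition 4.1]{kainz2024local} genuinely precludes any orbit that could serve as an edge of a polycycle — and cleanly handling the degenerate possibility that $\omega(\xi)$ is a multiple equilibrium approached along one of its elliptic-sector orbits, which needs the local normal form to rule out.
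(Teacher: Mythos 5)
The paper itself does not prove this statement---it is imported verbatim from \cite[Corollary 5.1]{kainz2024local}---so there is no in-paper argument to match yours against; judged on its own, your proposal has one genuine gap at its core. The theorem's first alternative is that $\Gamma(\xi)$ \emph{itself} is a periodic orbit, not merely that $\omega(\xi)$ is one, and your argument only ever controls $\omega(\xi)$: nothing in it excludes the limit-cycle scenario in which $\Gamma(\xi)$ spirals onto a periodic orbit $\gamma\neq\Gamma(\xi)$. The inputs you allow yourself (discreteness of the zeros of $F$, absence of saddles/hyperbolic sectors, strictly positive index of each zero) do not rule this out: a focus encircled by an isolated periodic orbit is compatible with all three. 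What kills it is a genuinely complex-analytic ingredient that never appears in your sketch---either the period/residue identity $T=\oint_\gamma \frac{dz}{F(z)}=2\pi i\,\mathrm{Res}\bigl(\tfrac1F,a\bigr)$, which forces $F'(a)=2\pi i/T$ to be purely imaginary and hence $a$ to be a center, or (equivalently) the fact that $\Phi(T,\cdot)$ is holomorphic in the initial condition and equals the identity on $\gamma$, so by the identity theorem it is the identity near $\gamma$ and \emph{all} nearby orbits are periodic. This is precisely Broughan's no-limit-cycle theorem cited in the paper, and it is what simultaneously upgrades ``$\omega(\xi)$ is a periodic orbit'' to ``$\Gamma(\xi)$ is periodic,'' identifies the unique enclosed simple equilibrium as a center, and yields the foliation of $\Int(\Gamma(\xi))$ by periodic orbits.

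Because this ingredient is missing, two of your concluding steps are circular or non sequiturs as written: excluding a focus/node inside $\gamma$ ``because it contradicts periodicity of nearby orbits'' presupposes the very foliation you are trying to establish, and ``$\alpha(p)$ and $\omega(p)$ both periodic forces $\Gamma(p)$ to be periodic'' is simply false for general planar flows (an orbit may spiral between two limit cycles); both become correct only after the no-limit-cycle rigidity is in hand. By contrast, your reduction to the generalized Poincar\'e--Bendixson trichotomy and your exclusion of graphics via the local classification (no hyperbolic sectors; every orbit entering a neighborhood of an elliptic-sector orbit is itself homoclinic, so a polycycle cannot be approached by a spiralling orbit) is the right idea and can be made rigorous with the FED properties, e.g.\ along the lines of Lemma \ref{lemma:globalSector_nested_orbits}---as you yourself flag. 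The fix, then, is not more care with sectors but adding the holomorphy-of-the-flow/residue argument as the central lemma.
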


The analysis of questions (A) and (B) also sets the stage for studying holomorphic and meromorphic flows of vector fields derived from complex functions pertinent to number theory, such as the Riemann $\zeta$- and $\xi$-function \cite{broughan2005xi,broughan2004zeta,poppe2024sensitivities,lebiedz2020hamiltonian,schleich2018newtonxi} and their polynomial approximations. Some ideas are motivated by previous works, in which we analyzed normally attracting or repelling (slow) invariant manifolds of equilibria within the context of multiscale dynamical systems \cite{heiter2018,Lebiedz2005b,Lebiedz2006a,Lebiedz2011a,Lebiedz2013a,Lebiedz2016,Lebiedz2022}. In the following, we provide an outline of our paper.

We start with a short chapter about the center basin, also called period annulus. We use the well-known theory about closed orbits examined in \cite{andronov1973qualitativetheory,markus1954global,neumann1976global} in order to provide complete proofs of several topological properties that are already stated in \cite[Theorem 4.1]{broughan2003structure} and \cite[Theorem 3.3]{broughan2003holomorphic}.%, cf. \cite[Theorem 4.1]{broughan2003structure} and \cite[Theorem 3.3]{broughan2003holomorphic}, and provide detailed proofs of several topological properties.

Subsequently, as the main focus of this paper, we examine the topological structure of equilibria of order $m\ge 2$. Since a precise definition of a \textit{global} elliptic sector is lacking in the literature, cf. \cite{andronov1973qualitativetheory,broughan2003structure,brickman1977conformal,garijo2006local,dumortier2006qualitative,perko2013differential}, we introduce the concept of sector-forming orbits to provide an appropriate global characterization. We show that the intuitive notion of the geometric structure of a global elliptic sector can indeed be made rigorous. We conclude this section by showing that, in the sense of Markus and Neumann \cite{neumann1975classification,markus1954global}, it naturally forms a strip canonical region, thus addressing question (B).

%we propose an appropriate characterization by introducing the concept of sector-forming orbits. We demonstrate that the intuitive idea of the geometric structure of a global elliptic sector is indeed rigorously valid.%We support the proof of the openness with geometric visualization, cf.~Figure \ref{fig:construction_of_J}, and point out the conditions under which the local structure of a finite elliptic decomposition can be transferred to the global phase portrait, an issue already addressed in \cite[Example 4.7, Example 4.8]{kainz2024local}.

Building on these results, we analyze the topology of the basin of nodes and foci. The authors of \cite[Theorem 4.3]{broughan2003structure} explicitly do not exclude the scenario of an isolated equilibrium on the boundary of the basin. % -- specifically, an equilibrium that is not attached to an orbit lying outside the basin.
We address this aspect in detail in Proposition \ref{prop:nf_no_isolated_equilibria} with a rigorous proof in the appendix.

Finally, we define and investigate heteroclinic regions as one geometric possibility between two adjacent elliptic sectors, observing topological properties similar to the preceding cases. This leads to the exclusion of the existence of a cycle in the phase space formed by equilibria and heteroclinic orbits connecting these equilibria, cf.~Corollary \ref{cor:hetreg_no_cycle_with_heteroclinic_regions}. We conclude this paper with an example that illustrates the occurrence of several elliptic sectors and heteroclinic regions together with their separatrices in the phase space.

Throughout the paper, we selectively focus on explicit constructions that extend local structures to global ones, without a priori resorting to potentially known or conjectured global results, because we consider constructive proofs to offer a fruitful setting for developing general approaches to questions of type (A) and (B). Nevertheless, we aim to contextualize our framework within previous research from different but related perspectives.

\section{\bf Basin of centers}
Throughout this chapter, we consider an entire function $F\in\Hol{}(\C{})$, $F\not\equiv0$, and a center $a\in\C{}$ of \eqref{eq:planarODE}.
\begin{definition}\label{def:center}
    We define the \textit{center basin} or \textit{period annulus} $\mathcal{V}$ of $F$ in $a$ as
    \begin{align*}
        \mathcal{V}:=\{a\}\cup\left\{x\in\C:\Gamma(x)\text{ is periodic with }a\in\Int(\Gamma)\right\}.
    \end{align*}
    The boundary of $a$ is defined as the boundary of $\mathcal{V}$.
\end{definition}
A well-established theory exists regarding the local and global geometry of the period annulus $\mathcal{V}$. Markus and Neumann, cf. \cite{neumann1975classification, markus1954global, neumann1976global}, provide a thorough analysis of annular (parallel) canonical regions. In the following, we highlight which of the known properties extend to the holomorphic setting and examine how holomorphy further refines both the global geometry and the proof techniques. This approach leads to complete proofs of the topological results stated in \cite[Theorem 4.1]{broughan2003structure} and \cite[Theorem 3.3]{broughan2003holomorphic}.

\begin{proposition}\label{prop:center_invariant}
    $\mathring{\mathcal{V}}$ is nonempty. $\mathcal{V}$ and $\partial\mathcal{V}$ are flow-invariant. It holds
    \begin{align}\label{eq:center_invariant}
		\mathcal{V}=\bigcup_{\mathclap{x\in\mathcal{V}}} \overline{\Int(\Gamma(x))}\text{.}
    \end{align}
\end{proposition}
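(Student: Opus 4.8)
The plan is to establish the three assertions in the order stated, leaning on Theorem \ref{thm:central_tool} as the principal engine.

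\textbf{Step 1: $\mathring{\mathcal{V}}\neq\emptyset$.} By the local structure of a center (case (i)), every orbit in a punctured neighborhood $U\setminus\{a\}$ of $a$ is a closed periodic orbit encircling $a$. Hence $U\subset\mathcal{V}$ for a sufficiently small $U$, which already gives an open subset of $\mathcal{V}$ containing $a$, so $\mathring{\mathcal{V}}\neq\emptyset$. (Here one should quote the precise local-center statement from \cite{kainz2024local}; the nested family of periodic orbits shrinking to $a$ is what is needed.)

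\textbf{Step 2: Flow-invariance of $\mathcal{V}$ and the identity \eqref{eq:center_invariant}.} Flow-invariance of $\mathcal{V}$ is immediate: if $x\in\mathcal{V}\setminus\{a\}$, then $\Gamma(x)$ is a periodic orbit with $a\in\Int(\Gamma(x))$, and $\Gamma(\varphi_t(x))=\Gamma(x)$ for every $t$, so the whole orbit lies in $\mathcal{V}$; and $a$ is itself an equilibrium. For the identity \eqref{eq:center_invariant}, the inclusion $\mathcal{V}\subset\bigcup_{x\in\mathcal{V}}\overline{\Int(\Gamma(x))}$ is trivial since $x\in\overline{\Int(\Gamma(x))}$ (with $a\in\overline{\Int(\Gamma(a))}$ read as the degenerate case). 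For the reverse inclusion, fix $x\in\mathcal{V}\setminus\{a\}$ and let $y\in\overline{\Int(\Gamma(x))}$; I must show $y\in\mathcal{V}$. If $y\in\Gamma(x)$ this is clear. Otherwise $y\in\Int(\Gamma(x))$, which is a compact (after taking closure) flow-invariant region — invariant because $\Gamma(x)$ is a periodic orbit, so no trajectory can cross it, hence $\overline{\Int(\Gamma(x))}$ contains $\Gamma_{\pm}(y)$. Apply Theorem \ref{thm:central_tool} with $K=\overline{\Int(\Gamma(x))}$ and $\xi=y$: either $\Gamma(y)$ is periodic with a center $a'$ in its interior, and moreover (again by the theorem) $\Int(\Gamma(x))\setminus\{a'\}$ — note $\Int(\Gamma(x))$ is itself swept out, since it is the interior of a periodic orbit lying in $K$ — is entirely filled with periodic orbits around $a'$; since the only equilibrium available inside is $a$ (there is exactly one equilibrium in $\Int(\Gamma(x))$, namely the center $a$, by the same corollary applied to $\Gamma(x)$), we get $a'=a$ and $a\in\Int(\Gamma(y))$, i.e. $y\in\mathcal{V}$. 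If instead the limit set of $\Gamma(y)$ were a single equilibrium, that equilibrium would have to be $a$, but then $y$ would lie on a separatrix accumulating at the center $a$ — impossible, since a punctured neighborhood of $a$ consists solely of closed orbits surrounding it, which no nonperiodic orbit can enter and leave. Hence this case does not occur, and $y\in\mathcal{V}$, proving \eqref{eq:center_invariant}.

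\textbf{Step 3: Flow-invariance of $\partial\mathcal{V}$.} Since $\mathcal{V}$ is flow-invariant and the flow $\varphi_t$ is a homeomorphism of $\C$ for each fixed $t$ (the solution map of an entire ODE is a homeomorphism on its domain; here we also use that complete orbits are used in defining $\mathcal{V}$, or restrict to the maximal flow-invariant considerations from \cite{kainz2024local}), it maps $\overline{\mathcal{V}}$ onto $\overline{\mathcal{V}}$ and $\mathring{\mathcal{V}}$ onto $\mathring{\mathcal{V}}$, hence $\partial\mathcal{V}=\overline{\mathcal{V}}\setminus\mathring{\mathcal{V}}$ onto itself. One subtlety: a point of $\partial\mathcal{V}$ might a priori have a non-complete orbit (escaping to $\infty$ in finite time), but points on $\partial\mathcal{V}$ can only have orbits whose trajectories stay in $\partial\mathcal{V}$, and the invariance argument via the homeomorphism $\varphi_t$ only needs the orbit through the point to be defined, which the flow-invariance of $\overline{\mathcal{V}}$ handles pointwise in $t$.

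\textbf{Anticipated main obstacle.} The delicate point is Step 2, specifically ruling out that a point $y\in\Int(\Gamma(x))$ lies on an orbit whose $\alpha$- or $\omega$-limit set is the single equilibrium $a$. The clean way is to invoke the local center picture: a full punctured neighborhood of $a$ is foliated by closed orbits surrounding $a$, so any orbit entering this neighborhood is trapped on one such closed orbit and cannot have $a$ as a limit set unless it is the equilibrium itself. I would make sure the local result cited from \cite{kainz2024local} is stated strongly enough to give this "no separatrix at a center" fact directly; if not, it follows from Theorem \ref{thm:central_tool} applied to a small periodic orbit around $a$. The bookkeeping of "exactly one equilibrium in $\Int(\Gamma(x))$" must also be handled carefully — it is exactly the content of Theorem \ref{thm:central_tool} applied to the periodic orbit $\Gamma(x)\subset\mathcal{V}$.
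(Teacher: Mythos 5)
Your proposal is correct and follows essentially the same route as the paper: the local center structure from \cite{kainz2024local} gives $a\in\mathring{\mathcal{V}}$, standard invariance facts handle $\mathcal{V}$ and $\partial\mathcal{V}$ (the paper cites \cite[Lemma 6.4]{teschl2012ordinary} where you argue directly), and Theorem \ref{thm:central_tool} yields \eqref{eq:center_invariant}. The only difference is economy: the paper invokes the final sentence of Theorem \ref{thm:central_tool} applied to the periodic orbit $\Gamma(x)$ itself, which immediately says $\Int(\Gamma(x))$ is filled with periodic orbits around the unique interior center $a$, subsuming your pointwise case analysis at $y$ and the separate ``no separatrix at a center'' argument.
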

\begin{proof}
    Definition 3.1 in \cite{kainz2024local} ensures $a\in\mathring{\mathcal{V}}\not=\emptyset$. The flow-invariance of $\mathcal{V}$ and $\partial\mathcal{V}=\overline{\mathcal{V}}\cap\overline{\C\setminus\mathcal{V}}$ follow from \cite[Lemma 6.4]{teschl2012ordinary}. By using the holomorphy of $F$, we can apply Theorem \ref{thm:central_tool} to conclude that all orbits in $\Int(\Gamma(x))$, with $x\in\mathcal{V}$, are periodic with $a$ in the interior, which shows \eqref{eq:center_invariant}.
\end{proof}
This Proposition shows that $\mathcal{V}$ is indeed a parallel annular region, cf. \cite[p. 90]{neumann1976global}. In general, equation \eqref{eq:center_invariant} does not hold for arbitrary planar dynamical systems. In fact, there are examples where one or more limit cycles appear within closed orbits, with spiraling behavior in a neighborhood of the limit cycle. However, holomorphy rules out the existence of limit cycles and similar phenomena within periodic orbits, cf. \cite[Theorem 3.2]{broughan2003holomorphic}.

The following result demonstrates that, once again due to holomorphy of $F$, no equilibria can lie on the boundary of $a$.
\begin{proposition}\label{prop:center_no_Equilibria_on_boundary}
    It holds $\partial\mathcal{V}\cap F^{-1}(\{0\})=\emptyset$.
\end{proposition}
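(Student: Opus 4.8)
The plan is to argue by contradiction: suppose there exists an equilibrium $b \in \partial\mathcal{V}\cap F^{-1}(\{0\})$. Since $\partial\mathcal{V}$ is flow-invariant (Proposition \ref{prop:center_invariant}), the point $b$ together with the whole orbit structure near it must be compatible with being a boundary point of a period annulus. I would first use the fact that $b$ is a limit of points $x_n\in\mathcal{V}$ whose periodic orbits $\Gamma(x_n)$ each enclose $a$; by equation \eqref{eq:center_invariant}, $b\in\overline{\Int(\Gamma(x_n))}$ is impossible for the interior (those points are in $\mathcal{V}$), so $b$ must be approached "from the annulus side" in a controlled way. The key structural input is the classification of equilibria recalled in the introduction: $b$ is either a center, a focus/node, or a higher-order equilibrium with a finite elliptic decomposition.

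Next I would rule out each case. If $b$ were a center, it would have its own period annulus, a neighborhood filled with closed orbits; but then points arbitrarily close to $b$ lie on small closed orbits that do \emph{not} enclose $a$ (since $a\neq b$ and the orbits shrink to $b$), contradicting that every neighborhood of $b$ meets $\mathcal{V}$, i.e. meets orbits enclosing $a$ — one would need to check that a periodic orbit close to $b$ and a periodic orbit close to $a$ enclosing $b$ cannot coexist, which follows because closed orbits of a planar flow are either nested or disjoint, and Theorem \ref{thm:central_tool} forces the interior of any periodic orbit to contain exactly one equilibrium. If a tiny orbit around $b$ and a large orbit around $a$ both lie in $\mathcal{V}\cup\{\text{center }b\}$, the large one would enclose two equilibria, contradiction. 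If $b$ were a focus or node, then nearby orbits spiral into (or out of) $b$ and are not periodic, so a punctured neighborhood of $b$ contains non-periodic orbits; but such a neighborhood must also contain points of $\mathcal{V}$, whose orbits are periodic and enclose $a$ — and a periodic orbit enclosing $a$ that passes near $b$ would, by Theorem \ref{thm:central_tool} again, have only $a$ in its interior, yet the spiraling behavior of the focus/node near $b$ is incompatible with $b$ lying in the exterior of all these shrinking periodic orbits while still being their limit. Making this precise is the crux.

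The main obstacle, and where I would spend the most care, is the higher-order case (iii): $b$ has a finite elliptic decomposition of order $2m-2$, consisting of elliptic sectors whose orbits are homoclinic loops at $b$. Here the contradiction should come from combining two facts: (1) each elliptic-sector orbit is a homoclinic loop based at $b$, and its interior contains at least one equilibrium by Theorem \ref{thm:central_tool} (indeed the loop together with $b$ bounds a region, and the flow on that loop returns to $b$, so the enclosed compact region must contain an equilibrium); and (2) the periodic orbits of $\mathcal{V}$ accumulating at $b$ must thread between these elliptic sectors. One shows a periodic orbit $\Gamma(x_n)$ passing very close to $b$ either enters an elliptic sector — impossible, since it would then be forced to be homoclinic to $b$ rather than periodic — or it must enclose $b$; but if it encloses $b$ it encloses an equilibrium other than $a$ (namely $b$), contradicting Theorem \ref{thm:central_tool}. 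The delicate point is verifying that a closed orbit passing through an arbitrarily small neighborhood of $b$ genuinely must either enclose $b$ or enter one of the hyperbolic/elliptic sectors; this uses the local sectorial decomposition from \cite[Definition 4.1]{kainz2024local} and a Jordan-curve argument on how $\Gamma(x_n)$ crosses the sector boundaries (the separatrices at $b$).

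Finally I would assemble the cases: in all three, the assumption $b\in\partial\mathcal{V}\cap F^{-1}(\{0\})$ forces some periodic orbit in $\mathcal{V}$ to enclose an equilibrium distinct from $a$, contradicting the "exactly one equilibrium, a center" conclusion of Theorem \ref{thm:central_tool}. Hence $\partial\mathcal{V}\cap F^{-1}(\{0\})=\emptyset$. I expect the write-up to lean on a single unifying lemma — roughly, "any periodic orbit in $\mathcal{V}$ has only $a$ in its interior, and points of $\partial\mathcal{V}$ are limits of such orbits, so a boundary equilibrium $b\neq a$ would have to be separated from $a$ by all these nested orbits, placing it in their common exterior, which is incompatible with $b$ being their accumulation point" — with the local structure of $b$ only entering to preclude the degenerate possibility that $b$ is itself accumulated by periodic orbits not enclosing $a$.
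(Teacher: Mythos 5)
Your proposal is correct and takes essentially the same route as the paper: argue by contradiction, run through the trichotomy center / focus--node / higher-order equilibrium with a FED, and play each local structure off against the fact that every neighborhood of the boundary point contains points of $\mathcal{V}$ whose orbits are periodic, enclose only the center $a$, and have themselves as limit sets (Theorem \ref{thm:central_tool}). The two places you flag as delicate are dispatched immediately in the paper from the cited local definitions: for a focus/node, \emph{every} orbit through a small ball tends to the equilibrium (\cite[Definition 3.1]{kainz2024local}), and for a FED equilibrium every orbit through a small ball has it in its positive or negative limit set (\cite[Definition 4.1 c) (ii)]{kainz2024local}), both of which a periodic orbit in $\mathcal{V}$ violates, so no separate threading or nesting argument is needed.
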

\begin{proof}
    The case $\partial\mathcal{V}=\emptyset$ is trivial, so let $x\in\partial\mathcal{V}$ and assume this is an equilibrium. For all $\varepsilon>0$ there exists $y\in\mathcal{B}_\varepsilon(x)\cap\mathcal{V}$ such that $\Gamma(y)$ neither has only $x$ in its limit sets nor is a periodic orbit with $x$ in its interior. Note that $x\not=a$, since $a\in\mathring{\mathcal{V}}$. Hence, $x$ cannot be a center, focus or node. By using the holomorphy of $F$, we can apply \cite[Theorem 4.4]{kainz2024local} to conclude that $x$ has a FED. In particular, by \cite[Definition 4.1]{kainz2024local}, there exists $r>0$ such that $x\in w_+(\Gamma(z))\cup w_-(\Gamma(z))$ for all $z\in\mathcal{B}_r(x)$. But the orbits in $\mathcal{B}_r(x)\cap\mathcal{V}\not=\emptyset$ do not tend to $x$, contradicting the property c) (ii) in \cite[Definition 4.1]{kainz2024local}. Thus, $x$ cannot be an equilibrium.
\end{proof}
\begin{theorem}\label{thm:center_open}
    $\mathcal{V}$ is open and unbounded. All orbits on $\partial\mathcal{V}$ are unbounded.
\end{theorem}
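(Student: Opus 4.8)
The plan is to establish the three assertions in sequence, using the previous propositions as building blocks. First I would show $\mathcal{V}$ is open: given $x\in\mathcal{V}$ with $x\neq a$, the orbit $\Gamma(x)$ is a periodic orbit (a closed Jordan curve) with $a\in\Int(\Gamma(x))$. By continuous dependence on initial conditions and the fact that $\Gamma(x)$ is a closed orbit, a whole neighborhood of $\Gamma(x)$ consists of points whose orbits are periodic and encircle $a$ — this is the standard ``annular collar'' around a closed orbit, which here is clean because Theorem \ref{thm:central_tool} forbids limit cycles, so nearby orbits cannot spiral. Concretely, one picks a transversal segment through $x$; the Poincaré return map is defined and, by holomorphy/Theorem \ref{thm:central_tool}, is the identity on the side toward $\mathcal{V}$, so nearby orbits close up. Hence $x\in\mathring{\mathcal{V}}$. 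Combined with $a\in\mathring{\mathcal{V}}$ from Proposition \ref{prop:center_invariant}, this gives $\mathcal{V}=\mathring{\mathcal{V}}$.

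Next, unboundedness of $\mathcal{V}$: suppose for contradiction that $\mathcal{V}$ is bounded. Then $\overline{\mathcal{V}}$ is compact, and by \eqref{eq:center_invariant} together with flow-invariance of $\partial\mathcal{V}$ (Proposition \ref{prop:center_invariant}), any $x\in\partial\mathcal{V}$ has $\Gamma(x)\subset\partial\mathcal{V}\subset\overline{\mathcal{V}}$, a compact set. Applying Theorem \ref{thm:central_tool} to $\Gamma_+(x)$ (or $\Gamma_-(x)$) inside this compact set: either $\Gamma(x)$ is periodic with a center in its interior, or its limit set is a single equilibrium. By Proposition \ref{prop:center_no_Equilibria_on_boundary} there are no equilibria on $\partial\mathcal{V}$, so the limit set cannot be an equilibrium on the boundary; and if $\Gamma(x)$ were periodic with center $a'$ in its interior, then $a'\in\mathcal{V}$ would force (by \eqref{eq:center_invariant}) $\overline{\Int(\Gamma(x))}\subset\mathcal{V}$, making $\Gamma(x)$ an interior orbit, not a boundary orbit — contradiction. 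One must also handle $\partial\mathcal{V}=\emptyset$: then $\mathcal{V}$ is both open and closed in $\C$, hence $\mathcal{V}=\C$, which is unbounded, contradicting boundedness. So $\mathcal{V}$ is unbounded.

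Finally, orbits on $\partial\mathcal{V}$ are unbounded: let $x\in\partial\mathcal{V}$ and suppose $\Gamma(x)$ is bounded. Then $\overline{\Gamma(x)}$ is compact and, by flow-invariance of $\partial\mathcal{V}$, contained in $\partial\mathcal{V}$. Exactly the dichotomy from Theorem \ref{thm:central_tool} applies as in the previous paragraph, and both alternatives are excluded by Proposition \ref{prop:center_no_Equilibria_on_boundary} and by \eqref{eq:center_invariant} respectively. Hence $\Gamma(x)$ is unbounded.

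The main obstacle I anticipate is the openness argument: one needs to rule out that orbits arbitrarily close to a periodic orbit $\Gamma(x)$ fail to be periodic or fail to encircle $a$, i.e. one needs the ``flow box / transversal'' setup together with the crucial input that holomorphy (via Theorem \ref{thm:central_tool}) prevents spiraling and limit cycles on the interior side. Making precise that the return map is the identity on the $\mathcal{V}$-side — rather than merely continuous — is the technical heart; the unboundedness claims are then relatively short contrapositive arguments built on the already-established $\eqref{eq:center_invariant}$, Proposition \ref{prop:center_no_Equilibria_on_boundary}, and Theorem \ref{thm:central_tool}.
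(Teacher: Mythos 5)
Your overall route is viable and genuinely different from the paper's (the paper simply invokes the classical description in \cite[\S 12.1]{andronov1973qualitativetheory} of the boundary of an annular region as equilibria plus saddle separatrices, kills saddles by holomorphy and equilibria by Proposition \ref{prop:center_no_Equilibria_on_boundary}, and concludes $\mathcal{V}\cap\partial\mathcal{V}=\emptyset$), but as written it has two concrete gaps. First, in the openness step, ``the return map is the identity on the side toward $\mathcal{V}$'' is the trivial half: Theorem \ref{thm:central_tool} already states that $\Int(\Gamma(x))$ is filled with periodic orbits around $a$, so the inner side lies in $\mathcal{V}$ for free. What openness actually requires is the \emph{outer} side of $\Gamma(x)$, and identity on the inner side does not propagate outward by continuity alone; you need either real-analyticity of the Poincar\'e map plus the identity theorem, or a direct argument: if $P(y)\neq y$ for outer points $y$ arbitrarily close to $x$, the monotone forward (or backward) iterates of $P$ on the transversal converge to a fixed point $y^*$, the corresponding semi-orbit of $y$ is bounded, and its limit set contains the regular point $y^*$, contradicting Theorem \ref{thm:central_tool} (whose alternatives are ``periodic'' or ``limit set a single equilibrium''). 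So Theorem \ref{thm:central_tool} does carry the argument, but it must be applied to an outer spiralling orbit, not to the $\mathcal{V}$-side; your sketch points at the wrong side of the transversal.

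Second, in both unboundedness arguments you dismiss the periodic alternative of Theorem \ref{thm:central_tool} by asserting that the enclosed center $a'$ lies in $\mathcal{V}$. That is automatic only if $a'=a$. If $a'\neq a$, then $a'\notin\mathcal{V}$ (every periodic orbit in $\mathcal{V}$ has exactly one equilibrium, namely $a$, in its interior, again by Theorem \ref{thm:central_tool}), so \eqref{eq:center_invariant} gives nothing and this case is simply not excluded by your argument. It can be handled with your own part one: the openness proof is center-independent, so the period annulus $\mathcal{V}'$ of $a'$ is open, contains $x$, and is disjoint from $\mathcal{V}$, contradicting $x\in\partial\mathcal{V}$; this is precisely the role played in the paper by Lemma \ref{lem:no_center_on_boundary_of_invariant_sets} and by the corresponding step in the proof of Theorem \ref{thm:globalSector_unbounded}. (Also, even when $a'=a$, \eqref{eq:center_invariant} only yields $\overline{\Int(\Gamma(x))}\subset\mathcal{V}$; to conclude $x\notin\partial\mathcal{V}$ you are tacitly using the openness already proved, which is fine but should be stated.) With these two repairs your proof goes through and is self-contained, whereas the paper's proof is shorter but leans on the cited structure theorem for boundaries of annular regions.
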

\begin{proof}
    Since $\mathcal{V}$ consists of closed periodic orbits, we can use the already known theory about annular regions of analytic planar dynamical systems. In \cite[§12.1, p. 203]{andronov1973qualitativetheory}, it is shown that $\partial\mathcal{V}$ consists of equilibria as well as separatrices of saddle points. However, the holomorphy of $F$ prevents the existence of saddles, cf. \cite[Theorem 3.2]{kainz2024local}. Hence, by Proposition \ref{prop:center_no_Equilibria_on_boundary}, $\partial\mathcal{V}$ consists only of unbounded separatrices tending to infinity in both time directions. In particular, each of these separatrices connects two saddles at Infinity, cf. \cite{kotus1982global}, and is in particular not a periodic orbit, i.e. $\mathcal{V}\cap\partial\mathcal{V}=\emptyset$.
\end{proof}
We have thus verified that $\mathcal{V}$ is a canonical region whose boundary consists of separatrices, cf. \cite[Theorem 5.2]{markus1954global}. Furthermore, there exists a global smooth transversal in $\mathcal{V}\setminus\{a\}$, that is, an open smooth arc crossing each closed orbit in $\mathcal{V}$ exactly once. The definition of annular canonical regions leads immediately to the following result. However, we present a alternative short proof by applying a version of the Seifert–Van Kampen theorem, combined with the results obtained above.
\begin{theorem}\label{thm:center_simply_connected}
    $\mathcal{V}$ is connected, path-connected and simply connected.
\end{theorem}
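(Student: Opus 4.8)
The plan is to build everything out of the core structural equation \eqref{eq:center_invariant}, namely $\mathcal{V}=\bigcup_{x\in\mathcal{V}}\overline{\Int(\Gamma(x))}$, together with the fact (Theorem \ref{thm:center_open}) that $\mathcal{V}$ is open. First I would establish \emph{connectedness and path-connectedness}: each set $\overline{\Int(\Gamma(x))}$ is the closed Jordan domain bounded by a periodic orbit, hence compact, connected, and even path-connected (it is homeomorphic to a closed disk by the Jordan--Schoenflies theorem); moreover every one of these sets contains the center $a$, since $a\in\Int(\Gamma(x))$ for all $x\in\mathcal{V}\setminus\{a\}$ by Definition \ref{def:center}, and trivially $a\in\overline{\Int(\Gamma(a))}$ read as $\{a\}$. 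Thus $\mathcal{V}$ is a union of path-connected sets all sharing the common point $a$, so $\mathcal{V}$ is path-connected, hence connected.

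For \emph{simple connectedness} I would use the van Kampen / nerve-type argument the authors advertise. Write $\mathcal{V}$ as a nested exhaustion: for $x\in\mathcal{V}\setminus\{a\}$ set $D_x:=\overline{\Int(\Gamma(x))}$, each a closed topological disk, hence simply connected with $\pi_1(D_x)=1$. Equation \eqref{eq:center_invariant} presents $\mathcal{V}$ as the union of these disks. The key geometric fact, which follows from Theorem \ref{thm:central_tool} (all orbits interior to a periodic orbit are themselves periodic around $a$) together with the Jordan curve theorem, is that the family $\{D_x\}$ is totally ordered by inclusion up to the equivalence of lying on the same orbit: for $x,y\in\mathcal{V}$, either $D_x\subseteq D_y$ or $D_y\subseteq D_x$, because two distinct periodic orbits around $a$ cannot cross, so one lies in the interior of the other. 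Consequently $\mathcal{V}$ is an increasing union of simply connected open sets (one can fatten each $D_x$ slightly, using openness of $\mathcal{V}$, to an open disk $U_x$ still contained in $\mathcal{V}$ and still totally ordered by inclusion). A direct colimit / Seifert--van Kampen argument over this directed system, or equivalently the elementary fact that an increasing union of simply connected open subsets of the plane is simply connected, then yields $\pi_1(\mathcal{V})=1$.

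Alternatively, and perhaps more cleanly, I would invoke the classification of planar domains: an open connected subset of $\mathbb{C}$ (equivalently of $S^2$) is simply connected if and only if its complement in $S^2$ is connected. Here $\partial\mathcal{V}$ consists, by Theorem \ref{thm:center_open}, entirely of unbounded separatrices tending to infinity in both time directions, so $S^2\setminus\mathcal{V}$ is the union of these closed unbounded orbits together with the point at infinity $\infty$, and every such orbit has $\infty$ in its closure (in $S^2$); hence $S^2\setminus\mathcal{V}$ is connected, giving simple connectedness at once. I would present the van Kampen argument as the main line, since it is the one flagged in the text, and mention the complement criterion as a remark.

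The main obstacle I anticipate is making the ``increasing union'' step fully rigorous: one must verify that the totally ordered family $\{D_x\}$ is \emph{cofinal} in the appropriate sense and that passing to the direct limit commutes with $\pi_1$ (this is where one genuinely needs a van Kampen statement for directed unions, or the compactness of loops: any loop in $\mathcal{V}$ has compact image, hence lies in finitely many $D_x$, hence in a single largest one, where it is nullhomotopic). Handling the degenerate contribution of the center point $a$ (so that $\{a\}=\overline{\Int(\Gamma(a))}$ is consistent with the notation) and confirming that openness of $\mathcal{V}$ really does let us thicken each closed disk $D_x$ to an open one inside $\mathcal{V}$ without breaking the inclusion order are the remaining routine but necessary checks.
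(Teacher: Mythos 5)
Your main argument is essentially the paper's: the paper likewise uses openness to write $\mathcal{V}=\bigcup_{x\in\mathcal{V}}\Int(\Gamma(x))$ as a union of open, simply connected Jordan interiors all containing $a$, gets connectedness and path-connectedness from the common point, and concludes simple connectedness by the Seifert--van Kampen theorem for infinite open covers; your nestedness/loop-compactness refinement merely makes the van Kampen hypotheses (path-connected intersections, cofinality) explicit. Only your side remark needs care: $S^2\setminus\mathcal{V}$ is not just the boundary separatrices plus $\infty$ (it may contain other basins, equilibria, and full two-dimensional regions), though the complement-connectedness criterion can still be salvaged by noting that any component of the complement avoiding $\infty$ would have its boundary inside $\partial\mathcal{V}$ and hence contain a bounded boundary orbit, contradicting Theorem \ref{thm:center_open}.
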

\begin{proof}
    The interior of any periodic orbit is clearly connected. Hence, by Proposition \ref{prop:center_invariant}, $\mathcal{V}$ can be interpreted as union of connected sets, all having the point $a$ in common. Thus, also the union must be connected, cf. \cite[Theorem 23.3]{munkres2000topology}. By \cite[Proposition 12.25]{sutherland2009introduction}, $\mathcal{V}$ is even path-connected. By openness of $\mathcal{V}$ we get
    \begin{align*}
		\mathcal{V}=\bigcup_{\mathclap{x\in\mathcal{V}}} \Int(\Gamma(x))\text{.}
    \end{align*}
    All the sets $\Int(\Gamma(x))$, with $x\in\mathcal{V}$, are open and simply connected. Hence, by applying the Seifert-van Kampen theorem for infinite open covers, cf. \cite[Theorem 1.20]{hatcher2005algebraic}, we conclude that also $\mathcal{V}$ is simply connected.
    % Let $L\subset\mathcal{V}$ be a loop (closed path). By openness of $\mathcal{V}$, cf. Theorem \ref{thm:center_open}, for all $x\in L$ there exists a $y_x\in\Ext(\Gamma(x))\cap\mathcal{V}\not=\emptyset$ such that $x\in\Int(\Gamma(y_x))\subset\mathcal{V}$. Hence, the system $\left\{\Int(\Gamma(y_x)):x\in L\right\}$ is an open cover of $L$. Since $L$ is compact, there exist finitely many points $x_1,\ldots,x_\ell\in L$, $\ell\in\N{}$, such that
    % \begin{align*}
    %     L\subset\bigcup_{k=1}^\ell\Int(\Gamma(y_{x_k}))\subset\mathcal{V}\text{.}
    % \end{align*}
    % Since $\ell$ is finite, $y\in\left\{y_{x_k}:1\le k\le\ell\right\}$ must exist such that $\Gamma(y)$ is the outermost periodic orbit having the other $\ell-1$ periodic orbits in its interior. By equation \eqref{eq:center_invariant}, we conclude $L\subset\Int(\Gamma(y))\subset\mathcal{V}$ with $y\in\mathcal{V}$. Since $\Int(\Gamma(y))$ is clearly simply connected, $L$ can be continuously (homotopically) deformed to a point in $L\subset\mathcal{V}$. Since $\mathcal{V}$ is already path-connected, it follows that $\mathcal{V}$ is simply connected.
\end{proof}
We conclude this chapter with a lemma that will play an important role at several points in what follows.
\begin{lemma}\label{lem:no_center_on_boundary_of_invariant_sets}
    Let $A\subset\C$ be a flow-invariant set that admits no closed orbits. Then $\mathcal{V}\cap\partial A=\emptyset$.
\end{lemma}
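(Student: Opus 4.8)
The plan is to argue by contradiction. Suppose $x \in \mathcal{V} \cap \partial A$. Since $A$ is flow-invariant, so is its closure $\overline{A}$ and its boundary $\partial A = \overline{A} \cap \overline{\C \setminus A}$ (using \cite[Lemma 6.4]{teschl2012ordinary} as in the proof of Proposition \ref{prop:center_invariant}); hence the whole orbit $\Gamma(x)$ lies in $\partial A$. Now I distinguish whether $x = a$ or $x \in \mathcal{V} \setminus \{a\}$. In the latter case $\Gamma(x)$ is a periodic orbit with $a$ in its interior, and this periodic orbit is contained in $\partial A$; in the former case I first note that, since $a \in \mathring{\mathcal{V}}$, any sufficiently small periodic orbit $\Gamma(y)$ around $a$ with $y \in \mathcal{V}$ is contained in the neighborhood of $x = a$, and I can instead work with such a $\Gamma(y) \subset \mathcal{V} \setminus \{a\}$ — but this $\Gamma(y)$ need not lie in $\partial A$. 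So the cleaner route is: reduce to the case $x \neq a$ directly by observing that if $a \in \partial A$ then, since $\mathring{\mathcal{V}}$ is an open neighborhood of $a$ meeting both $A$ and $\C \setminus A$, there is a point $x' \in \mathring{\mathcal{V}} \cap \partial A$ with $x' \neq a$ (the boundary of $A$ cannot be the single point $a$ inside the open set $\mathring{\mathcal{V}}$, as $\partial A$ is closed and $A \cap \mathring{\mathcal{V}}$, $(\C\setminus A) \cap \mathring{\mathcal{V}}$ are both nonempty open, forcing their common boundary within $\mathring{\mathcal{V}}$ to be nonempty and not reducible to an isolated point). Thus WLOG $x \in \mathcal{V} \setminus \{a\}$.

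With $x \in \mathcal{V} \setminus \{a\}$, the orbit $\Gamma(x)$ is a closed periodic orbit contained in the flow-invariant set $\partial A$. But then $\partial A \subseteq \overline{A}$ contains a closed orbit, so $\overline{A}$ admits a closed orbit — and here I would need to be slightly careful, since the hypothesis is that $A$ (not $\overline{A}$) admits no closed orbits. To close this gap, note that $\Gamma(x) \subset \partial A$ means every point of $\Gamma(x)$ is a limit of points of $A$; take the periodic orbit $\Gamma(x)$ and consider orbits $\Gamma(y_n)$ through points $y_n \in A$ with $y_n \to x$. For $n$ large, $y_n \in \mathcal{V} \setminus \{a\}$ as well (since $\mathcal{V}$ is open by Theorem \ref{thm:center_open} and $x \in \mathcal{V}$, $x \neq a$), so $\Gamma(y_n)$ is a periodic orbit through a point of $A$; by flow-invariance of $A$, the entire periodic orbit $\Gamma(y_n)$ lies in $A$. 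This directly contradicts the assumption that $A$ admits no closed orbits, completing the proof.

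The main obstacle I anticipate is the $x = a$ case and the $A$-versus-$\overline{A}$ subtlety: one must ensure that the closed orbit produced genuinely lies in $A$ itself, not merely in its closure, and that the reduction away from the center $a$ is airtight. Both are handled by exploiting that $\mathcal{V}$ is open (Theorem \ref{thm:center_open}) and that $a$ is an interior point with a full punctured neighborhood of genuine periodic orbits, so that any point of $A$ near $a$ — or near any point of $\partial A \cap \mathcal{V}$ — lies on an honest periodic orbit which flow-invariance then forces into $A$. Once this is set up, the contradiction is immediate and no further machinery (beyond flow-invariance of closures/boundaries and Theorem \ref{thm:central_tool}, already invoked for $\mathcal{V}$) is required.
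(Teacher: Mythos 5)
Your core mechanism is the right one and is essentially the paper's own argument (the paper's entire proof is one sentence: a nonempty $\mathcal{V}\cap\partial A$ would contradict the openness of $\mathcal{V}$): a point of $\partial A$ lying in the open set $\mathcal{V}$ has points of $A$ arbitrarily close to it; such points lie in $\mathcal{V}\setminus\{a\}$, hence on periodic orbits, and flow-invariance of $A$ forces those whole periodic orbits into $A$, contradicting the hypothesis that $A$ admits no closed orbits. Your final step carries this out correctly, and the preliminary observations (invariance of $\partial A$, $\Gamma(x)\subset\partial A$) are harmless though not actually needed.

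The flaw is in your reduction to the case $x\neq a$. You assert that $A\cap\mathring{\mathcal{V}}$ and $(\C\setminus A)\cap\mathring{\mathcal{V}}$ are ``nonempty open''; but $A$ is an arbitrary flow-invariant set, not an open one, so $A\cap\mathring{\mathcal{V}}$ need not be open, and the conclusion you draw --- that $\partial A\cap\mathring{\mathcal{V}}$ must contain some $x'\neq a$ --- can genuinely fail: for $A=\{a\}$ (or $\{a\}$ together with any invariant set far from $a$) one has $\partial A\cap\mathcal{V}=\{a\}$. The clean treatment of $x=a$ is simply your main-case argument again: since $a\in\partial A$, either $A$ contains points arbitrarily close to $a$ but distinct from $a$ --- such points lie in $\mathcal{V}\setminus\{a\}$, hence on closed orbits contained in $A$, contradiction --- or $a$ is an isolated point of $A$. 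In that residual degenerate situation no contradiction is available, and the statement only survives if the stationary orbit $\{a\}$ is counted as a closed orbit; the paper's one-line proof is equally silent here, and the case never occurs in the paper's applications, where $A$ never contains the relevant center. So keep your final argument, but replace the ``WLOG $x\neq a$'' justification by this direct handling of points of $A$ near $a$.
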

\begin{proof}
    If the set $\mathcal{V}\cap\partial A$ was not empty, it would contradict the openness of $\mathcal{V}$, cf. Theorem \ref{thm:center_open}.
\end{proof}
%We now have all the necessary tools to place the center basin in the context of canonical regions with global transversals.
% \begin{theorem}\label{thm:center_unbounded}
%     All orbits on the boundary of $a$ are unbounded. $\mathcal{V}$ is unbounded.
% \end{theorem}
% \begin{proof}
%     If the boundary of $a$ is empty, then $\mathcal{V}$ is open, closed and nonempty, i.e. $\mathcal{V}=\C{}$ and thus unbounded. Assume $\partial\mathcal{V}\not=\emptyset$ and let $x\in\partial\mathcal{V}$. Suppose that $\Gamma(x)\subset\partial\mathcal{V}$ is bounded. By Proposition \ref{prop:center_no_Equilibria_on_boundary}, we get $w_\pm(\Gamma(x))\cap F^{-1}(\{0\})=\emptyset$. Hence, by Theorem \ref{thm:central_tool}, $\Gamma(x)$ must be periodic. By Theorem \ref{thm:central_tool}, $\Gamma(x)$ has exactly one equilibrium, a center $\tilde{a}$, in $\Int(\Gamma(x))$. If $\tilde{a}\not=a$, $x\in\partial\mathcal{V}$ would be a point in the basin in $\tilde{a}$, which is a contradiction to the openness of this basin, cf. Theorem \ref{thm:center_open}. If $a=\tilde{a}$, we get $x\in\mathcal{V}$, which is again a contradiction, since $\mathcal{V}\cap\partial\mathcal{V}=\emptyset$. Thus, $\Gamma(x)$ and $\mathcal{V}$ must be unbounded.
% \end{proof}

\section{\bf Global elliptic sectors}
In this chapter, we consider an entire function $F\in\Hol{}(\C)$, $F\not\equiv0$, and an equilibrium $a\in\C$ of \eqref{eq:planarODE} of order $m\in\N\setminus\{1\}$. Our aim is to define and understand the global structure of the "areas of influence" associated with such an equilibrium. We begin by providing a brief overview of earlier research on this topic.

\subsection{Survey of related research}

In \cite[§17]{andronov1973qualitativetheory} and \cite[Chapter 1.5]{dumortier2006qualitative}, the authors describe and analyze several types of curvilinear sectors in general planar dynamical systems, providing detailed proofs of their geometric properties. A global analysis of planar dynamical systems is given in \cite{neumann1975classification, markus1954global, markus1954globalintegrals, neumann1976global}, where canonical regions and global transversals are classified. The structure of multiple equilibria is also discussed there. However, these works do not specifically address the special case of holomorphic vector fields in \eqref{eq:planarODE}.

In \cite{brickman1977conformal}, the authors classify holomorphic flows up to conformal equivalence in a neighborhood of an equilibrium, providing representatives for each equivalence class. This yields several model flows together with their corresponding local phase portraits. A related investigation is carried out in \cite{garijo2004normal}, where the authors describe various normal forms depending on the order and residue of the equilibrium. For example, the local structure of an equilibrium $a$ of order $m\in\N\setminus\{1\}$ is conformally equivalent to \eqref{eq:planarODE} with
\begin{align*}
    F(x):=x^n-cx^{2n-1}\text{,}
\end{align*}
where $c\in\C{}$ is the residuum of $\frac{1}{F}$ at $a$. However, all these analyses concern the local phase portrait.

We have likewise analyzed the local structure of multiple equilibria in our recent paper \cite{kainz2024local}, where we established the existence of a finite elliptic decomposition (FED) of order $2m-2$ in the multiple equilibrium $a$, cf. \cite[Proposition 4.3]{kainz2024local} and \cite[Theorem 4.4]{kainz2024local}. In particular, we showed that each local elliptic sector of this decomposition has adjacent definite directions given by
\begin{align*}
    \mathcal{E}(F,m)=\left\{\frac{\ell\pi-\arg(F^{(m)}(a))}{m-1}\mod 2\pi:\ell\in\mathbb{Z}\right\}\subset[0,2\pi)\text{.}
\end{align*}
The geometry of a local elliptic sector is defined in \cite[Definition 4.1 c)]{kainz2024local} and illustrated in Figure \ref{fig:local_elliptic_sector}. A finite elliptic decomposition can be obtained by cyclically copying the geometry in Figure \ref{fig:local_elliptic_sector} around the equilibrium. In the proof of \cite[Theorem 4.4]{kainz2024local}, we established step by step the various geometric objects required for a finite elliptic decomposition without invoking the theory of topological/conformal equivalence.

\begin{figure}[ht]
    \begin{center}
        \begin{tikzpicture}
             %\begin{axis}[axis lines=box, xmin=-0.2, xmax=2, ymin=-0.36, ymax=1.9, scale = 0.95, grid=both]
             \begin{axis}[axis lines=none, ticks=none, xmin=-0.2, xmax=2, ymin=-0.36, ymax=1.9, scale = 1]
                \pgfmathsetmacro{\L}{0.75};
                \pgfmathsetmacro{\LGray}{0.85*\L}
                \pgfmathsetmacro{\LArrows}{0.6*\L}
                \pgfmathsetmacro{\P}{1.3};
                
                \addplot[draw=none, fill=blue!30, opacity=0.9] table[col sep=comma] {homoclinicOrbit.csv};
                
                \addplot[color=ColMaWi, name path = homoclinicOrbit, postaction={decorate}, decoration={markings, mark= at position 0.25 with {\arrow{Classical TikZ Rightarrow}}, mark= at position 0.55 with {\arrow{Classical TikZ Rightarrow}}, mark= at position 0.8 with {\arrow{Classical TikZ Rightarrow}}}, line width=\L pt, line join = round, line cap = round] table[col sep=comma] {homoclinicOrbit.csv};
                \node at (0.9,0.7) [right] {\textcolor{ColMaWi}{$\Xi$}};
                \fill (0,0) circle (2pt) node[left,yshift=-1.4mm] {$a$};
                \path[name path=path1] (0.890285394129103,0.408775954806926) .. controls (1.25,0.1) .. (1.2,-0.2);
                \path[name path=path2] (0.530612668121931,0.747386591257611) .. controls (0.25,1.35) .. (-0.3,1.3);
                \draw[name path=sep1, postaction={decorate}, decoration={markings, mark= at position 0.3 with {\arrow{Classical TikZ Rightarrow}}, mark= at position 0.85 with {\arrow{Classical TikZ Rightarrow}}}, line width=\L pt, line join = round, line cap = round] (0,0) .. controls (1.2,-0.2) .. (1.9,0.05);
                \draw[name path=sep2, postaction={decorate}, decoration={markings, mark= at position 0.3 with {\arrowreversed{Classical TikZ Rightarrow}}, mark= at position 0.87 with {\arrowreversed{Classical TikZ Rightarrow}}}, line width=\L pt, line join = round, line cap = round] (0,0) .. controls (-0.15,0.9) .. (0.15,1.8);
                \fill[name intersections={of=sep1 and path1, by={in1}}] (in1) circle (\P pt) node[below,yshift=-0.3mm] {$p_1$};
                \fill[name intersections={of=sep2 and path2, by={in2}}] (in2) circle (\P pt) node[left] {$p_2$};
                
                \input{homoclinicOrbitE1Lambda1}; %Teil des homoclinic orbit bis E1 + Lambda1
                \addplot [blue!30, opacity=0.9] fill between [of=top1 and sep1];
                \path[fill=white] (in1) --  (1.3,1) -- (1.9,0.05) -- (1.6,-0.25) -- cycle;
                
                \input{homoclinicOrbitE2Lambda2}; %Teil des homoclinic orbit bis E1 + Lambda1
                \addplot [blue!30, opacity=0.9] fill between [of=top2 and sep2];
                
                \draw[red,line width=\LGray pt, line join = round, line cap = round] (0.890285394129103,0.408775954806926) .. controls (1.2,0.2) .. (in1);
                \draw[red,line width=\LGray pt, line join = round, line cap = round] (0.530612668121931,0.747386591257611) .. controls (0.35,1.2) .. (in2);
                \draw[-{Classical TikZ Rightarrow}, line width=\LArrows pt, line join = round, line cap = round] (0.89,0.12) .. controls (1.14,0.32) .. (1.18,0.6);
                \draw[-{Classical TikZ Rightarrow}, line width=\LArrows pt, line join = round, line cap = round] (0.98,-0.04) .. controls (1.4,0) .. (1.6,0.22);
                \draw[-{Classical TikZ Rightarrow}, line width=\LArrows pt, line join = round, line cap = round] (0.4,1.5) .. controls (0.17,1.33) .. (0.07,0.95);
                \draw[-{Classical TikZ Rightarrow}, line width=\LArrows pt, line join = round, line cap = round] (0.8,0.98) .. controls (0.51,1.02) .. (0.24,0.74);
                \fill (0.890285394129103,0.408775954806926) circle (\P pt) node[left,yshift=1mm,xshift=0.3mm] {$E_1$};
                \fill (0.530612668121931,0.747386591257611) circle (\P pt) node[below,xshift=0.8mm] {$E_2$};
                \node at (1.15,0.22) [right] {\textcolor{red}{$\Lambda_1$}};
                \node at (0.33,1.19) [right] {\textcolor{red}{$\Lambda_2$}};
                \node at (1.88,0.04) [above] {$\Gamma_1$};
                \node at (0.14,1.79) [right] {$\Gamma_2$};
                \fill (in1) circle (\P pt);
                \fill (in2) circle (\P pt);
                
                \draw[name path=sep1, postaction={decorate}, decoration={markings, mark= at position 0.3 with {\arrow{Classical TikZ Rightarrow}}, mark= at position 0.85 with {\arrow{Classical TikZ Rightarrow}}}, line width=\L pt, line join = round, line cap = round] (0,0) .. controls (1.2,-0.2) .. (1.9,0.05);
                \draw[name path=sep2, postaction={decorate}, decoration={markings, mark= at position 0.3 with {\arrowreversed{Classical TikZ Rightarrow}}, mark= at position 0.87 with {\arrowreversed{Classical TikZ Rightarrow}}}, line width=\L pt, line join = round, line cap = round] (0,0) .. controls (-0.15,0.9) .. (0.15,1.8);
            \end{axis}
        \end{tikzpicture}
    \end{center}
    \caption{Geometrical objects of a local elliptic sector (blue) in a multiple equilibrium $a$. $\Gamma_1$ and $\Gamma_2$ (black) are the characteristic orbits. $\Lambda_1$ and $\Lambda_2$ (red) are the transversals or paths without contact. $\Xi=\Gamma(E_1)$ (green) is the homoclinic orbit. The arrows indicate the direction of the vector field.}
    \label{fig:local_elliptic_sector}
\end{figure}

With this in mind, we now extend the analysis of the local structure to a global perspective by introducing the notion of a global elliptic sector. We provide detailed complete proofs of the topological results given in \cite[Theorem 4.2]{broughan2003structure}. Furthermore, we embed this global elliptic sector as a canonical region into the global theory of Markus and Neumann, cf. \cite{markus1954global, markus1954globalintegrals, neumann1976global}.% To do this, we need at first a precise definition of a global elliptic sector.

\subsection{The geometry of global elliptic sectors}
In this chapter, we provide a step-by-step analysis of the global elliptic sector to obtain a complete topological and geometrical characterization. We introduce the notion of the global elliptic sector in an intuitive manner by "blowing up" the local structure in an appropriate way.
\begin{definition}\label{def:globalSector}
    \leavevmode
    \begin{itemize}
        \item[(i)] Let $\Xi\subset\C\setminus\{a\}$ be a homoclinic orbit in $a$, i.e. $w_+(\Gamma)=w_-(\Gamma)=\{a\}$. $\Xi$ is a \textit{sector-forming orbit} in $a$, if for all $z\in\Int(\Xi\cup\{a\})$ the orbit $\Gamma(z)$ is also homoclinic in $a$.\footnote{A construction of a parameterization for the closed Jordan curve $\Xi\cup\{a\}$ with compact time interval can be found in \cite[Remark 4.2]{kainz2024local}. }
        \item[(ii)] Let $\Xi$ be a sector-forming orbit in $a$. The \textit{global elliptic sector} $\mathcal{S}(\Xi)$ of $F$ in $a$ with respect to $\Xi$ is
        \begin{align*}
            \mathcal{S}(\Xi):=\underbrace{\Xi\cup\Int(\Xi\cup\{a\})}_{=\overline{\Int(\Xi\cup\{a\})}\setminus\{a\}}\cup\,\mathcal{S}^\prime(\Xi)
        \end{align*}
        with
        \begin{align*}
             \mathcal{S}^\prime(\Xi):=\big\{x\in\C:\,&\Gamma(x)\text{ is homoclinic in }a,\\
             &\Xi\subset\Int(\Gamma(x)\cup\{a\})\big\}\text{.}
        \end{align*}
        \item[(iii)] Let $\Gamma_1,\Gamma_2$, be two homoclinic orbits in $a$ and $\Gamma_1\not=\Gamma_2$. $\Gamma_1$ and $\Gamma_2$ are called \textit{nested} if either $\Gamma_1\subset\Int(\Gamma_2\cup\{a\})$ or $\Gamma_2\subset\Int(\Gamma_1\cup\{a\})$. $\Gamma_1$ and $\Gamma_2$ are called \textit{mutually exterior} if they are not nested, i.e. $\Gamma_1\subset\Ext(\Gamma_2\cup\{a\})$ and $\Gamma_2\subset\Ext(\Gamma_1\cup\{a\})$.\footnote{This terminology is in the sense of \cite[\S 17, 3.]{andronov1973qualitativetheory}.}
    \end{itemize}
\end{definition}
% \begin{remark}
%     Let $\Gamma\subset\C$ be homoclinic in $a$.  Hence, all sets in Definition \ref{def:globalSector} are indeed well-defined.
% \end{remark}
Apriori the global elliptic sector in this constructive Definition depends on the choice of the sector-forming orbit. We will later see the motivation for this type of construction and demonstrate that the global elliptic sector is in an appropriate sense independent of the choice of a particular sector-forming orbit -- as can be intuitively deduced from the idea of the structure of an equilibrium of order $m\ge2$, cf. Figures 2 and 3 in \cite{kainz2024local}.
% \begin{lemma}\label{lem:existence_of_sector_forming_orbit}
%     Let $\theta_+,\theta_-\in\mathcal{E}(F,m)$ be two adjacent definite directions with $\lambda(\theta_\pm)=\mp 1$. Then there exists a sector-forming orbit $\Xi$ in $a$ with definite directions $\theta_+,\theta_-$.
% \end{lemma}
\begin{lemma}\label{lem:existence_of_sector_forming_orbit}
    There exists a sector-forming orbit $\Xi$ in $a$.
\end{lemma}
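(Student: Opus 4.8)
The plan is to obtain $\Xi$ directly from the finite elliptic decomposition of $a$ and then to verify the sector-forming property of Definition~\ref{def:globalSector}~(i) by a single application of Theorem~\ref{thm:central_tool} on the Jordan domain that $\Xi$ bounds together with $a$.

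Since $F\not\equiv0$ its zeros are isolated, so I would first fix $r>0$ with $\overline{\mathcal{B}_r(a)}\cap F^{-1}(\{0\})=\{a\}$ and such that $a$ has a finite elliptic decomposition of order $2m-2\ge2$ in $\mathcal{B}_r(a)$, cf.\ \cite[Theorem~4.4]{kainz2024local}. In particular $a$ possesses at least one local elliptic sector and hence, shrinking $r$ if necessary, a homoclinic orbit $\Xi$ in $a$ with $\Xi\subset\mathcal{B}_r(a)$, cf.\ \cite[Definition~4.1]{kainz2024local}; by \cite[Remark~4.2]{kainz2024local} the set $\Xi\cup\{a\}$ is a closed Jordan curve. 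Since $\C\setminus\mathcal{B}_r(a)$ is connected, unbounded and disjoint from $\Xi\cup\{a\}$, it lies in $\Ext(\Xi\cup\{a\})$; therefore $\Int(\Xi\cup\{a\})\subset\mathcal{B}_r(a)$ and $K:=\overline{\Int(\Xi\cup\{a\})}\subset\overline{\mathcal{B}_r(a)}$ is compact with $K\cap F^{-1}(\{0\})=\{a\}$.

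Next I would note that $\Xi\cup\{a\}$, being the union of a single orbit and an equilibrium, is flow-invariant, so by uniqueness of solutions any orbit disjoint from $\Xi\cup\{a\}$ never meets it and each of the connected sets $\Int(\Xi\cup\{a\})$ and $\Ext(\Xi\cup\{a\})$ is flow-invariant. Hence for every $z\in\Int(\Xi\cup\{a\})$ we have $\Gamma(z)\subset\Int(\Xi\cup\{a\})\subset K$, in particular $\Gamma_+(z),\Gamma_-(z)\subset K$, so Theorem~\ref{thm:central_tool} applies. The orbit $\Gamma(z)$ cannot be periodic: otherwise $\Gamma(z)$ would be a Jordan curve inside the Jordan domain $\Int(\Xi\cup\{a\})$, so $\Int(\Gamma(z))\subset\Int(\Xi\cup\{a\})\subset K$ (again because the complement of a Jordan domain is connected and unbounded), and Theorem~\ref{thm:central_tool} would place a center inside $K$, which is impossible since $a$ is the only equilibrium of $K$ and has order $m\ge2$. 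Thus Theorem~\ref{thm:central_tool} gives that the positive and the negative limit set of $\Gamma(z)$ each consist of exactly one equilibrium, necessarily the unique equilibrium $a$ of $K$, so $\Gamma(z)$ is homoclinic in $a$. As $z\in\Int(\Xi\cup\{a\})$ was arbitrary and $\Xi$ is itself homoclinic in $a$, the orbit $\Xi$ is a sector-forming orbit.

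I expect the only genuinely delicate point to be the first step: one needs a homoclinic loop around $a$ whose enclosed Jordan domain contains no equilibrium besides $a$. Both inputs here are consequences of holomorphy --- isolated zeros give the ``no other equilibrium'' part, and the finite elliptic decomposition of \cite{kainz2024local} supplies the homoclinic loop. Everything afterwards is just invariance of the two Jordan components together with Theorem~\ref{thm:central_tool}, the exclusion of a periodic orbit inside $K$ being the one substep that requires its own short argument.
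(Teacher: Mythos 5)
Your proof is correct. The paper obtains the sector-forming property essentially for free: it takes $\Xi=\Gamma(E_1)$ from a local elliptic sector of the FED and simply cites \cite[Definition 4.1 c)]{kainz2024local}, which already stipulates that every orbit through $\overline{\Int(\Xi\cup\{a\})}\setminus\{a\}$ is homoclinic in $a$, so no further dynamical argument is needed. You start from the same object (a homoclinic loop supplied by the FED) but re-derive the interior property from Theorem \ref{thm:central_tool}: after arranging, via isolatedness of the zeros of $F$, that $K=\overline{\Int(\Xi\cup\{a\})}$ contains no equilibrium besides $a$, flow-invariance of the two Jordan components together with the holomorphic Poincar\'e--Bendixson theorem forces every orbit through $\Int(\Xi\cup\{a\})$ to be homoclinic in $a$, the periodic alternative being excluded because the only available equilibrium $a$ has order $m\ge 2$ and hence is not a center. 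What your route buys: it uses only the existence of some homoclinic loop near $a$ rather than the full package of properties of a local elliptic sector, and it actually proves the stronger statement that any homoclinic orbit whose closed interior region contains no other equilibrium is sector-forming, which anticipates facts the paper only establishes later (Proposition \ref{prop:globalSector_interior}, Corollary \ref{cor:globalSector_final_corrollaries_1}). What it costs: a mild extra appeal to the FED (equivalently, the homoclinic loop) being available inside an arbitrarily small ball so that $\Xi$ and its interior avoid the other zeros of $F$; this is exactly how the local theory of \cite{kainz2024local} is set up, so the step is legitimate, but it is worth stating explicitly since the paper's one-line verification does not need it.
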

\begin{proof}
    Since there exists a FED in $a$, we can choose two adjacent definite directions $\theta_+,\theta_-\in\mathcal{E}(F,m)$ and the orbit $\Xi:=\Gamma(E_1)$, where $E_1$ is the point given by \cite[Definition 4.1 c)]{kainz2024local}, cf. Figure \ref{fig:local_elliptic_sector}. Then $w_+(\Xi)=w_-(\Xi)=\{a\}$, $\arg(\Phi(t,E_1)-a)=\theta_{+}$ for $t\to\infty$ and $\arg(\Phi(t,E_1)-a)=\theta_{-}$ for $t\to-\infty$, i.e. $\Xi$ is homoclinic and tends to $a$ in the definite direction $\theta_\pm$ for $t\to\pm\infty$. This orbit is part of a local elliptic sector $S$ between $\theta_+$ and $\theta_-$. In addition, all orbits in $\overline{\Int(\Xi\cup\{a\})}\setminus\{a\}\subset S$ are homoclinic in $a$, cf. \cite[Definition 4.1 c)]{kainz2024local}.
\end{proof}
The above Lemma ensures the existence of a sector-forming orbit between two arbitrarily chosen adjacent definite directions. In the following Propositions and Theorems we present various topological properties of the global elliptic sector.
% \begin{proposition}\label{prop:globalSector_invariant}
%     Let $\mathcal{S}:=\mathcal{S}(\theta_+,\theta_-,\Xi)$ be a global elliptic sector with adjacent definite directions $\theta_+,\theta_-\in\mathcal{E}(F,m)$, $\lambda(\theta_\pm)=\mp 1$, and a sector-forming orbit $\Xi$. $\mathring{\mathcal{S}}$ is nonempty. $\mathcal{S}$ and $\partial\mathcal{S}$ are flow-invariant. It holds
%     \begin{align*}
% 		\mathcal{S}= \bigcup_{\mathclap{x\in\mathcal{S}}}\Gamma(x)\text{.}
%     \end{align*}
% \end{proposition}
\begin{proposition}\label{prop:globalSector_invariant}
    Let $\mathcal{S}:=\mathcal{S}(\Xi)$ be a global elliptic sector generated by the sector-forming orbit $\Xi$. $\mathring{\mathcal{S}}$ is nonempty. $\mathcal{S}$ and $\partial\mathcal{S}$ are flow-invariant. It holds
    \begin{align*}
		\mathcal{S}=\bigcup_{\mathclap{x\in\mathcal{S}}}\Gamma(x)\text{.}
    \end{align*}
\end{proposition}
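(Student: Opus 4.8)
The overall strategy mirrors the proof of Proposition~\ref{prop:center_invariant}. That $\mathring{\mathcal{S}}$ is nonempty is immediate: by the footnote to Definition~\ref{def:globalSector}, $\Xi\cup\{a\}$ is a closed Jordan curve, so $\Int(\Xi\cup\{a\})$ is a nonempty open set, and it is contained in $\mathcal{S}$; hence $\mathring{\mathcal{S}}\supseteq\Int(\Xi\cup\{a\})\neq\emptyset$. Next, note that the displayed identity $\mathcal{S}=\bigcup_{x\in\mathcal{S}}\Gamma(x)$ is merely a reformulation of flow-invariance of $\mathcal{S}$: the inclusion $\supseteq$ is trivial since $x\in\Gamma(x)$, and $\subseteq$ says exactly that $\Gamma(x)\subseteq\mathcal{S}$ for all $x\in\mathcal{S}$. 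So the whole proposition reduces to showing that $\mathcal{S}$ is flow-invariant; the invariance of $\partial\mathcal{S}$ then follows from \cite[Lemma 6.4]{teschl2012ordinary} exactly as in Proposition~\ref{prop:center_invariant}, using that closures and complements of invariant sets are invariant and $\partial\mathcal{S}=\overline{\mathcal{S}}\cap\overline{\C\setminus\mathcal{S}}$.

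To establish invariance of $\mathcal{S}$, I would first record that $a\notin\mathcal{S}$: the first summand $\overline{\Int(\Xi\cup\{a\})}\setminus\{a\}$ excludes it, and points of $\mathcal{S}'(\Xi)$ have orbits homoclinic in $a$, hence different from $\{a\}$. Thus every $x\in\mathcal{S}$ lies on a non-trivial orbit, which in particular never meets $a$. Now fix $x\in\mathcal{S}$ and split into the three pieces of Definition~\ref{def:globalSector}(ii). If $x\in\Xi$, then $\Gamma(x)=\Xi\subseteq\mathcal{S}$. If $x\in\mathcal{S}'(\Xi)$, then for every admissible $t$ one has $\Gamma(\Phi(t,x))=\Gamma(x)$, so the two defining conditions of $\mathcal{S}'(\Xi)$ (namely $\Gamma(x)$ homoclinic in $a$, and $\Xi\subseteq\Int(\Gamma(x)\cup\{a\})$) hold verbatim for $\Phi(t,x)$; hence $\Gamma(x)\subseteq\mathcal{S}'(\Xi)\subseteq\mathcal{S}$.

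The only case with genuine content is $x\in\Int(\Xi\cup\{a\})$. Here I would argue that $\C\setminus(\Xi\cup\{a\})$ is flow-invariant: the set $\Xi\cup\{a\}$ is a union of whole orbits (the homoclinic orbit $\Xi$ and the singleton orbit $\{a\}$), and the connected set $\Gamma(x)$ cannot intersect it, since sharing a point with $\Xi$ would force $\Gamma(x)=\Xi$ and sharing the point $a$ would force $\Gamma(x)=\{a\}$, both contradicting $x\notin\Xi\cup\{a\}$. Consequently $\Gamma(x)$ stays within one connected component of $\C\setminus(\Xi\cup\{a\})$, which by the Jordan curve theorem is $\Int(\Xi\cup\{a\})$; therefore $\Gamma(x)\subseteq\Int(\Xi\cup\{a\})\subseteq\mathcal{S}$. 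This exhausts all cases and proves the claim.

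I do not expect a real obstacle; the main points requiring care are that $\Xi\cup\{a\}$ genuinely is a closed Jordan curve with a well-defined bounded interior — which is precisely what the parameterization of \cite[Remark 4.2]{kainz2024local} provides — and the routine bookkeeping of checking invariance separately on each of the three pieces comprising $\mathcal{S}$. Everything else rests on the elementary facts that distinct orbits are disjoint and that a connected set disjoint from a Jordan curve lies entirely in one of its two complementary regions.
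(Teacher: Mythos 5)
Your proposal is correct and follows essentially the same route as the paper: the paper notes $\mathring{\mathcal{S}}\neq\emptyset$ via a point of $\Int(\Xi\cup\{a\})$ and declares the rest "analogous to Proposition \ref{prop:center_invariant}" (invariance of $\mathcal{S}$ because it is a union of orbits, boundary invariance via \cite[Lemma 6.4]{teschl2012ordinary}), which is exactly what you carry out. Your explicit case analysis, in particular the Jordan-curve/connectedness argument showing orbits through $\Int(\Xi\cup\{a\})$ stay in that interior, is a correct and welcome filling-in of the step the paper leaves implicit.
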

\begin{proof}
    By choosing a point in $\Int(\Xi\cup\{a\})$, we have $\mathring{\mathcal{S}}\not=\emptyset$. The rest of the proof is analogous to the proof of Proposition \ref{prop:center_invariant}.
\end{proof}
% \begin{proposition}\label{prop:globalSector_Equilibria_on_boundary}
%     Let $\mathcal{S}:=\mathcal{S}(\theta_+,\theta_-,\Xi)$ be a global elliptic sector with adjacent definite directions $\theta_+,\theta_-\in\mathcal{E}(F,m)$, $\lambda(\theta_\pm)=\mp 1$, and a sector-forming orbit $\Xi$. It holds $\partial\mathcal{S}\cap F^{-1}(\{0\})=\{a\}$.
% \end{proposition}
\begin{proposition}\label{prop:globalSector_Equilibria_on_boundary}
    Let $\mathcal{S}:=\mathcal{S}(\Xi)$ be a global elliptic sector generated by the sector-forming orbit $\Xi$. It holds $\partial\mathcal{S}\cap F^{-1}(\{0\})=\{a\}$.
\end{proposition}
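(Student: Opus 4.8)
The plan is to prove the two inclusions separately. For "$\supseteq$", I need $a\in\partial\mathcal{S}$; this follows from the description $\Xi\cup\Int(\Xi\cup\{a\})=\overline{\Int(\Xi\cup\{a\})}\setminus\{a\}$ in Definition 3.4(ii): the point $a$ is a limit of points of $\Int(\Xi\cup\{a\})\subset\mathcal{S}$ but $a\notin\mathcal{S}$ (every point of $\mathcal{S}$ lies on a homoclinic orbit in $a$, hence is not an equilibrium, and $a$ itself is explicitly excluded). So $a\in\overline{\mathcal{S}}\setminus\mathcal{S}\subset\partial\mathcal{S}$, and since $a$ is an equilibrium, $a\in\partial\mathcal{S}\cap F^{-1}(\{0\})$.

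For the reverse inclusion "$\subseteq$", I take $x\in\partial\mathcal{S}$ with $F(x)=0$ and must show $x=a$. Suppose $x\neq a$. I will classify $x$ using the trichotomy (i)--(iii) from the introduction together with the earlier propositions. First, $x$ cannot be a center: by Proposition 3.5, $\mathcal{S}$ is flow-invariant and (as all its orbits are homoclinic in $a$) contains no closed orbits, so Lemma 2.8 gives $\mathcal{V}\cap\partial\mathcal{S}=\emptyset$, ruling out a center at $x$. Next, $x$ cannot be a focus or node: near such an $x$ all orbits tend to $x$ in forward (or all in backward) time, so a whole neighborhood of $x$ would consist of orbits with $x$ in a limit set; but points of $\mathcal{S}$ arbitrarily close to $x$ lie on orbits homoclinic to $a$ (with $\Xi$ in their interior), and such an orbit cannot also have the distinct equilibrium $x$ in its limit set — its limit sets are exactly $\{a\}$. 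Hence $x$ is a multiple equilibrium of order $\ge 2$ and, by \cite[Theorem 4.4]{kainz2024local}, possesses a FED: there is $r>0$ so that every $z\in\mathcal{B}_r(x)$ has $x\in w_+(\Gamma(z))\cup w_-(\Gamma(z))$. This is the same contradiction as in the proof of Proposition 2.4 — pick $z\in\mathcal{B}_r(x)\cap\mathcal{S}\neq\emptyset$ (nonempty since $x\in\partial\mathcal{S}$); then $\Gamma(z)$ is homoclinic in $a\neq x$, so $x$ is in neither limit set of $\Gamma(z)$, contradicting property c)(ii) of \cite[Definition 4.1]{kainz2024local}. Therefore $x=a$.

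The main obstacle, and the step deserving the most care, is the focus/node case and, more subtly, making sure the FED argument is applied to the right set: I must confirm that $\mathcal{B}_r(x)\cap\mathcal{S}$ is genuinely nonempty for the chosen $r$ — which holds because $x\in\partial\mathcal{S}$ means every ball around $x$ meets $\mathcal{S}$ — and that the orbit through such a point is homoclinic to $a$ rather than lying on $\Xi\cup\{a\}$ in some degenerate way; since $x\neq a$ and $x$ is an equilibrium while $\Xi$ is not, there is no conflict. One should also note that the possibility $x\in\partial\mathcal{S}$ being itself on the boundary "at the seam" where two local elliptic sectors of a FED meet is harmless: the argument only uses that $\Gamma(z)$ for $z$ near $x$ in $\mathcal{S}$ is homoclinic to $a$, which is built into the definition of $\mathcal{S}$ via $\mathcal{S}'(\Xi)$ and the interior $\Int(\Xi\cup\{a\})$. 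I expect the write-up to mirror closely the proof of Proposition 2.4, so the new content is essentially the verification that $\mathcal{S}$ contains no closed orbits (for the Lemma 2.8 step) and the clean exclusion of foci/nodes via limit-set incompatibility.
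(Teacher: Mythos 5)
Your proposal is correct and follows essentially the same route as the paper's proof: exclude a center, focus or node at $x\in\partial\mathcal{S}\setminus\{a\}$ because nearby orbits of $\mathcal{S}$ are homoclinic to $a\neq x$, then invoke the FED from \cite[Theorem 4.4]{kainz2024local} and the property $x\in w_+(\Gamma(z))\cup w_-(\Gamma(z))$ for $z\in\mathcal{B}_r(x)\cap\mathcal{S}\neq\emptyset$ to reach a contradiction, and finally note $a\in\overline{\mathcal{S}}\setminus\mathcal{S}\subset\partial\mathcal{S}$ for the reverse inclusion. Your detour through Lemma \ref{lem:no_center_on_boundary_of_invariant_sets} for the center case is a harmless variant of the paper's direct ``orbits in $\mathcal{S}$ are not periodic'' argument.
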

\begin{proof}
    Let $x\in\partial\mathcal{S}\setminus\{a\}$. Suppose $x$ is an equilibrium. Since the orbits in $\mathcal{S}$ all tend to $a\not=x$ in both time directions and are not periodic, $x$ cannot be a center, focus or node. Hence, as in the proof of Proposition \ref{prop:center_no_Equilibria_on_boundary}, $x$ has a FED and there exists $r>0$ such that $x\in w_+(\Gamma(z))\cup w_-(\Gamma(z))$ for all $z\in\mathcal{B}_r(x)$ due to the holomorphy of $F$, cf. \cite[Theorem 4.4]{kainz2024local}. Since $\mathcal{B}_r(x)\cap\mathcal{S}\not=\emptyset$, we get the contradiction $x=a$. Thus, $x$ cannot be an equilibrium. This proves $\partial\mathcal{S}\cap F^{-1}(\{0\})\subset\{a\}$. Moreover, $\Xi\subset\mathcal{S}$ tends to $a$, i.e. $a\in\overline{\mathcal{S}}$. Since $a\not\in\mathcal{S}$, we conclude $a\in\overline{\mathcal{S}}\setminus\mathcal{S}\subset\partial\mathcal{S}$ and thus $\partial\mathcal{S}\cap F^{-1}(\{0\})=\{a\}$.
\end{proof}
\begin{lemma}\label{lemma:globalSector_nested_orbits}
    Let $\Gamma$ be a homoclinic orbit in $a$ and $x\in\Gamma$. Let $\varepsilon>0$. Then there exists $\delta>0$ such that all orbits $\hat{\Gamma}$ through $\mathcal{B}_\delta(x)$ satisfy the following properties:
    \begin{itemize}
        \item[(i)] $\hat{\Gamma}$ is homoclinic in $a$.
        \item[(ii)] There exists a set $H\subset\mathcal{U}_\varepsilon(\Gamma)$ such that $\partial H=\Gamma\cup\hat{\Gamma}\cup\{a\}$.\footnote{The set $H$ looks similar to a crescent moon \leftmoon.}
        \item[(iii)] If $\Gamma\not=\hat{\Gamma}$, then $\Gamma$ and $\hat{\Gamma}$ are nested.
    \end{itemize}
\end{lemma}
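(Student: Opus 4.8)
The plan is to use continuous dependence on initial conditions together with Theorem \ref{thm:central_tool} to control the behavior of nearby orbits, and then use the Jordan curve theorem to extract the nested crescent $H$.

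First, I would parameterize the closed Jordan curve $\Gamma \cup \{a\}$ on a compact time interval, as indicated in the footnote to Definition \ref{def:globalSector}(i) (citing \cite[Remark 4.2]{kainz2024local}). Concretely, pick $x \in \Gamma$ as the "base point" at time $0$; since $\Gamma$ is homoclinic, there is a compact arc $J = \Phi([-T,T], x)$ whose endpoints $\Phi(\pm T, x)$ lie in a small neighborhood of $a$ inside a local elliptic sector, and such that $J \cup \{a\}$ together with the two short pieces of orbit from $\Phi(\pm T, x)$ into $a$ form $\Gamma \cup \{a\}$. For (i): choose a small closed disk $\mathcal{B}_\rho(a)$ contained in the union of the local elliptic sectors of the FED at $a$ and small enough that $\mathcal{U}_\varepsilon(\Gamma)$ contains $\mathcal{B}_\rho(a)$; by the FED structure (\cite[Definition 4.1 c)]{kainz2024local}) together with Theorem \ref{thm:central_tool} applied to a suitable compact set, every orbit entering $\mathcal{B}_\rho(a)$ sufficiently close to where $\Gamma$ enters is itself homoclinic in $a$. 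By continuous dependence on initial conditions on the compact interval $[-T,T]$, there is $\delta > 0$ so that every orbit $\hat\Gamma$ through $\mathcal{B}_\delta(x)$ stays within $\varepsilon$ of $J$ up to time $\pm T$ and then enters $\mathcal{B}_\rho(a)$ close to $\Gamma$; combining these gives that $\hat\Gamma$ is homoclinic, and moreover $\hat\Gamma \subset \mathcal{U}_\varepsilon(\Gamma)$.

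For (ii) and (iii): once $\hat\Gamma \neq \Gamma$, both $\Gamma \cup \{a\}$ and $\hat\Gamma \cup \{a\}$ are closed Jordan curves lying in the thin tube $\mathcal{U}_\varepsilon(\Gamma)$. By flow-invariance, $\hat\Gamma$ cannot cross $\Gamma$, so $\hat\Gamma$ lies entirely in $\Int(\Gamma \cup \{a\})$ or entirely in $\Ext(\Gamma \cup \{a\})$ — this is precisely nestedness, giving (iii). (One has to check the two curves share only the point $a$; they cannot share any other point since orbits are disjoint, and at $a$ itself the two orbits approach along definite directions in $\mathcal{E}(F,m)$, so a short extra argument via the footnoted parameterization shows $\Gamma \cup \{a\}$ and $\hat\Gamma \cup \{a\}$ meet only at $a$ transversally enough to bound a region.) For (ii), take $H$ to be the open region between the two Jordan curves: if $\hat\Gamma \subset \Int(\Gamma \cup \{a\})$, set $H := \Int(\Gamma\cup\{a\}) \setminus \overline{\Int(\hat\Gamma \cup \{a\})}$, and symmetrically otherwise. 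Then $\partial H = \Gamma \cup \hat\Gamma \cup \{a\}$, and since both bounding curves lie in $\mathcal{U}_\varepsilon(\Gamma)$ and the tube is "thin," $H \subset \mathcal{U}_\varepsilon(\Gamma)$ as well — this last containment is where one must be slightly careful and may need to shrink $\varepsilon$ relative to the geometry near $a$, using that the local elliptic sector pinches down to $a$.

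The main obstacle I expect is the behavior near the equilibrium $a$: continuous dependence on initial conditions is only uniform on compact time intervals, so it controls $\hat\Gamma$ only along the finite arc $J$, and one needs the local FED structure (definite directions, the shape of local elliptic sectors) to "close up" the orbit at $a$ and to guarantee that the crescent $H$ does not bulge outside $\mathcal{U}_\varepsilon(\Gamma)$ near $a$. Handling the junction at $a$ carefully — ensuring the two Jordan curves meet only there and that the enclosed region stays in the prescribed tube — is the delicate part; the rest is a routine Jordan-curve-theorem bookkeeping argument.
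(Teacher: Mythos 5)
Your treatment of (i) matches the paper's: continuous dependence of the flow on a compact time interval $[-\tau,\tau]$ carries the nearby orbit into the angular regions $A_\pm$ around the definite directions $\theta_\pm\in\mathcal{E}(F,m)$, and the local FED structure (Proposition 4.3 of the previous paper) then forces $\hat{\Gamma}$ to be homoclinic. That part is fine.

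For (ii) and (iii), however, there is a genuine gap. You argue that since $\hat{\Gamma}$ cannot cross $\Gamma$, it lies entirely in $\Int(\Gamma\cup\{a\})$ or entirely in $\Ext(\Gamma\cup\{a\})$, and you call this ``precisely nestedness.'' It is not: in the exterior case, nestedness additionally requires $\Gamma\subset\Int(\hat{\Gamma}\cup\{a\})$, and the alternative — the mutually exterior configuration of Definition \ref{def:globalSector}(iii), with each curve in the other's exterior — is perfectly compatible with non-crossing. Indeed, it actually occurs for homoclinic orbits lying in different local elliptic sectors (petals) at $a$; ruling it out for orbits through $\mathcal{B}_\delta(x)$ is the whole content of (iii). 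Your construction of $H$ then presupposes the nested case (``if $\hat\Gamma\subset\Int(\Gamma\cup\{a\})$ \dots and symmetrically otherwise''), so (ii) and (iii) become circular: in the mutually exterior case your candidate $H$ would have boundary $\Gamma\cup\{a\}$ only, not $\Gamma\cup\hat{\Gamma}\cup\{a\}$. The paper proceeds in the opposite order: it first obtains, via \cite[\S 16.7, Lemma 15]{andronov1973qualitativetheory}, a region $H\subset\mathcal{U}_\varepsilon(\Gamma)$ with $\partial H=\Gamma\cup\hat{\Gamma}\cup\{a\}$ (this is also where the ``crescent stays in the tube'' issue you flag near $a$ is settled, which your sketch only asserts), and then deduces (iii) as in \cite[\S 16.10, Lemma 18]{andronov1973qualitativetheory}: if $\Gamma$ and $\hat{\Gamma}$ were mutually exterior, the unique complementary region bounded by both curves would be the unbounded common exterior, contradicting $H\subset\mathcal{U}_\varepsilon(\Gamma)$. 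To repair your argument you would need to supply exactly this boundedness/tube argument (or an equivalent local one near $a$) rather than deriving nestedness from the non-crossing dichotomy.
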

\begin{proof}
    Due to the holomorphy of $F$, we can apply \cite[Proposition 4.3 (ii)]{kainz2024local} to conclude the existence of two angles $\theta_+,\theta_-\in\mathcal{E}(F,m)$ such that $\Gamma$ tends to $a$ in the definite directions $\theta_+$ and $\theta_-$. By \cite[Proposition 4.3 (iii)]{kainz2024local} there exist $r,\Theta_\pm>0$ such that all orbits through the open set $\{z\in\C{}:|z-a|<r,|\arg(z-a)-\theta_\pm|<\Theta_\pm\}=:A_\pm$ tend to $a$ in the definite direction $\theta_\pm$ for $t\to\pm\infty$, i.e. are in particular homoclinic. Moreover, there exists $\tau>0$ such that $\Phi(\pm\tau,x)\in A_\pm$. Choose $\rho>0$ small enough such that $\mathcal{B}_{\rho}(\Phi(\pm\tau,x))\subset A_\pm$. By continuity of the flow, cf. \cite[Chapter 2.4, Theorem 4]{perko2013differential}, there exists a $\delta>0$ such that $|\Phi(\pm\tau,x)-\Phi(\pm\tau,y)|<\rho$ for all $y\in\mathcal{B}_{\delta}(x)$. This shows that all orbits $\hat{\Gamma}$ passing the circle $\mathcal{B}_{\delta}(x)$ are in particular homoclinic in $a$. Furthermore, by \cite[§16.7, Lemma 15]{andronov1973qualitativetheory}, we can reduce $\delta$ such that there exists a region $H\subset\mathcal{U}_\varepsilon(\Gamma)$, whose boundary is $\partial H=\Gamma\cup\hat{\Gamma}\cup\{a\}$. By following the argument in the proof of \cite[§16.10, Lemma 18]{andronov1973qualitativetheory}, we conclude that $\Gamma$ and $\hat{\Gamma}$ are nested if $\Gamma\not=\hat{\Gamma}$. Otherwise $H$ would be unbounded and could not lie within the bounded set $\mathcal{U}_\varepsilon(\Gamma)$.
\end{proof}
\begin{theorem}\label{thm:globalSector_open}
    Let $\mathcal{S}:=\mathcal{S}(\Xi)$ be a global elliptic sector generated by the sector-forming orbit $\Xi$ and $\mathcal{S}^\prime:=\mathcal{S}^\prime(\Xi)$. Then $\mathcal{S}$ is open. In particular, for all $x\in\mathcal{S}$ there exists $y\in\mathcal{S}\cap\Ext(\Gamma(x)\cup\{a\})$ such that $\Gamma(x)\subset\Int(\Gamma(y)\cup\{a\})$.
\end{theorem}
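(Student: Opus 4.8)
The plan is to establish openness first and then deduce the ``in particular'' clause from it together with Lemma~\ref{lemma:globalSector_nested_orbits}. Observe at the outset that $\Gamma(x)$ is homoclinic in $a$ for \emph{every} $x\in\mathcal{S}$: by the sector-forming property of $\Xi$ when $x\in\Int(\Xi\cup\{a\})$, and by Definition~\ref{def:globalSector} otherwise. To show $\mathcal{S}$ is open, fix $x\in\mathcal{S}$. If $x\in\Int(\Xi\cup\{a\})$ there is nothing to do, since that set is open and contained in $\mathcal{S}$. So assume $x\in\Xi\cup\mathcal{S}'$ and set $\Gamma:=\Gamma(x)$. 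Fixing any $p_0\in\Xi$, we have $p_0\notin\Gamma\cup\{a\}=\overline{\Gamma}$, so $\operatorname{dist}(p_0,\Gamma)>0$; choose $\varepsilon>0$ with $p_0\notin\mathcal{U}_\varepsilon(\Gamma)$. Applying Lemma~\ref{lemma:globalSector_nested_orbits} to $\Gamma$, the point $x$, and this $\varepsilon$ yields $\delta>0$ such that every orbit $\hat\Gamma$ meeting $\mathcal{B}_\delta(x)$ is homoclinic in $a$ and, if $\hat\Gamma\ne\Gamma$, is nested with $\Gamma$ and co-bounds a region $H\subseteq\mathcal{U}_\varepsilon(\Gamma)$ with $\partial H=\Gamma\cup\hat\Gamma\cup\{a\}$. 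It then suffices to prove $\hat\Gamma\subseteq\mathcal{S}$ for each such $\hat\Gamma$, since that gives $\mathcal{B}_\delta(x)=\bigcup\hat\Gamma\subseteq\mathcal{S}$.

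For the verification that $\hat\Gamma\subseteq\mathcal{S}$ I would use that two distinct homoclinic orbits in $a$ are either nested or mutually exterior — the Jordan curves $\hat\Gamma\cup\{a\}$ and $\Xi\cup\{a\}$ meet only at $a$, so this is immediate from the Jordan curve theorem. The cases $\hat\Gamma=\Gamma$ and $\hat\Gamma=\Xi$ are trivial. If $\Gamma\subseteq\Int(\hat\Gamma\cup\{a\})$, then $\Xi\subseteq\Int(\hat\Gamma\cup\{a\})$ — because $\Xi=\Gamma$ when $x\in\Xi$, and $\Xi\subset\Int(\Gamma\cup\{a\})\subseteq\Int(\hat\Gamma\cup\{a\})$ when $x\in\mathcal{S}'$ — hence $\hat\Gamma\in\mathcal{S}'$. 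If instead $\hat\Gamma\subseteq\Int(\Gamma\cup\{a\})$: when $x\in\Xi$ this gives $\hat\Gamma\subseteq\Int(\Xi\cup\{a\})\subseteq\mathcal{S}$; when $x\in\mathcal{S}'$, the Jordan curve $\hat\Gamma\cup\{a\}$ splits $\Int(\Gamma\cup\{a\})$ into $\Int(\hat\Gamma\cup\{a\})$ and the region $H$, and the connected set $\Xi\subset\Int(\Gamma\cup\{a\})$, being disjoint from $\hat\Gamma\cup\{a\}$ and not contained in $H$ (since $p_0\in\Xi$ but $p_0\notin\mathcal{U}_\varepsilon(\Gamma)\supseteq H$), must lie in $\Int(\hat\Gamma\cup\{a\})$, so again $\hat\Gamma\in\mathcal{S}'$. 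This completes the proof that $\mathcal{S}$ is open.

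For the ``in particular'' clause, fix $x\in\mathcal{S}$; then $\Gamma(x)$ is homoclinic in $a$. Applying Lemma~\ref{lemma:globalSector_nested_orbits} to $\Gamma(x)$ and $x$, and shrinking the resulting $\delta$ so that $\mathcal{B}_\delta(x)\subseteq\mathcal{S}$ (possible by openness), we obtain $\delta>0$ with $\mathcal{B}_\delta(x)\subseteq\mathcal{S}$ and with every orbit through $\mathcal{B}_\delta(x)$ other than $\Gamma(x)$ nested with $\Gamma(x)$. Since $x\in\Gamma(x)=\partial\Ext(\Gamma(x)\cup\{a\})$, the ball $\mathcal{B}_\delta(x)$ contains a point $y\in\Ext(\Gamma(x)\cup\{a\})$; then $y\in\mathcal{S}\cap\Ext(\Gamma(x)\cup\{a\})$, the orbit $\Gamma(y)$ is distinct from $\Gamma(x)$ (as $y\notin\Gamma(x)$) and nested with it, and $\Gamma(y)\not\subseteq\Int(\Gamma(x)\cup\{a\})$ because $y$ lies in the exterior. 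Hence $\Gamma(x)\subseteq\Int(\Gamma(y)\cup\{a\})$, as required.

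The main obstacle is the sub-case $\hat\Gamma\subseteq\Int(\Gamma\cup\{a\})$ with $x\in\mathcal{S}'$: one must rule out that a nearby orbit $\hat\Gamma$ slips in ``between'' $\Xi$ and $\Gamma=\Gamma(x)$ in such a way as to become mutually exterior to $\Xi$, which would leave $\hat\Gamma$ outside $\mathcal{S}$. This is exactly where the quantitative choice of $\varepsilon$ matters — it confines the crescent-shaped region $H$ to a thin tube around $\Gamma$ that cannot contain the fixed reference point $p_0\in\Xi$ — so the apparent difficulty is dispatched by combining Lemma~\ref{lemma:globalSector_nested_orbits} with that choice. The remaining ingredients (that $\Gamma(x)$ is always homoclinic, the elementary nesting facts for Jordan curves, and $x\in\partial\Ext(\Gamma(x)\cup\{a\})$) are routine.
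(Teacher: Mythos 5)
Your proof is correct and takes essentially the same route as the paper's: the same three-case split ($x\in\Int(\Xi\cup\{a\})$, $x\in\Xi$, $x\in\mathcal{S}'$), the same appeal to Lemma~\ref{lemma:globalSector_nested_orbits}, and the same resolution of the critical sub-case by confining the crescent $H$ to $\mathcal{U}_\varepsilon(\Gamma(x))$ so that it cannot contain $\Xi$ -- your choice of $\varepsilon$ via the reference point $p_0\in\Xi$ is in fact a slightly cleaner variant of the paper's choice, and you make explicit the ``in particular'' clause that the paper leaves implicit. One cosmetic slip: when $x\in\Xi$ you have $\Gamma=\Xi$, so the blanket assertion $p_0\notin\Gamma\cup\{a\}$ fails there, but harmlessly, since that case of your argument never uses $p_0$ or the specific $\varepsilon$.
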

\begin{proof}
    Let $x\in\mathcal{S}$. If $x$ lies in the open set $\Int(\Xi\cup\{a\})$, we are done. If $x\in\Xi$, we can choose $\varepsilon>0$ such that $\mathcal{B}_\varepsilon(x)\cap F^{-1}(\{0\})=\emptyset$ and apply Lemma \ref{lemma:globalSector_nested_orbits}. %In this case, we can choose $y\in\mathcal{B}_\delta(x)\cap\Ext(\Xi\cup\{a\})$ to conclude $\Xi\subset\Int(\Gamma(y)\cup\{a\})$.
    Hence, it remains to proof the Theorem for the case $x\in\mathcal{S}^\prime$, which is open. We have $\Xi\subset\Int(\Gamma(x)\cup\{a\})$ and choose $\varepsilon>0$ sufficiently small such that $\mathcal{B}_\varepsilon(x)\subset\mathcal{S}^\prime$. By applying Lemma \ref{lemma:globalSector_nested_orbits}, we get $\delta>0$ such that the orbits $\Gamma(x)$ and $\Gamma(y)$, with $y\in\mathcal{B}_\delta(x)$, are nested. The case $y\in\mathcal{B}_\delta(x)\cap\Ext(\Gamma(x)\cup\{a\})$ leads to $\Xi\subset\Int(\Gamma(x)\cup\{a\})\subset\Int(\Gamma(y)\cup\{a\})$. Let $y\in\mathcal{B}_\delta(x)\cap\Int(\Gamma(x)\cup\{a\})$. We have to show $\Xi\subset\Int(\Gamma(y)\cup\{a\})$. If $\Xi\subset\Ext(\Gamma(y)\cup\{a\}$, then $\Xi$ and $\Gamma(y)$ are mutually exterior and $\Xi\subset\mathcal{U}_\varepsilon(\Gamma(x))$. This implies in particular $\text{dist}(\Xi,x)<\varepsilon$, which is a contradiction to our choice of $\varepsilon$.
\end{proof}
The openness of the global elliptic sector now allows us to establish its unboundedness. At this stage, the holomorphy of $F$ plays a crucial role. For general vector fields, a global elliptic sector may in fact be bounded -- particularly when its boundary contains a saddle whose two separatrices both connect to the multiple equilibrium $a$, cf. \cite[Chapter 2]{brickman1977conformal}.
\begin{theorem}\label{thm:globalSector_unbounded}
    Let $\mathcal{S}:=\mathcal{S}(\Xi)$ be a global elliptic sector generated by the sector-forming orbit $\Xi$. All orbits on $\partial\mathcal{S}\setminus\{a\}$ are unbounded. $\mathcal{S}$ is unbounded.
\end{theorem}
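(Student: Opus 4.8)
The plan is to prove the two assertions in turn, relying on the facts already established for $\mathcal{S}:=\mathcal{S}(\Xi)$: it is open (Theorem \ref{thm:globalSector_open}), flow-invariant with $\mathcal{S}=\bigcup_{x\in\mathcal{S}}\Gamma(x)$ and $\partial\mathcal{S}$ flow-invariant (Proposition \ref{prop:globalSector_invariant}), $\partial\mathcal{S}\cap F^{-1}(\{0\})=\{a\}$ (Proposition \ref{prop:globalSector_Equilibria_on_boundary}), and every orbit in $\mathcal{S}$ is homoclinic in $a$ (so $\mathcal{S}$ admits no closed orbit). I will first show every orbit $\Gamma\subset\partial\mathcal{S}\setminus\{a\}$ is unbounded, and then deduce unboundedness of $\mathcal{S}$.

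First I would assume, for contradiction, that $\Gamma\subset\partial\mathcal{S}\setminus\{a\}$ is bounded, so $\overline{\Gamma}$ is compact and Theorem \ref{thm:central_tool} applies to $\Gamma$. If $\Gamma$ is periodic, it lies in the period annulus $\mathcal{V}$ of the center in its interior; since $\mathcal{V}$ is open (Theorem \ref{thm:center_open}) and $\mathcal{S}$ is flow-invariant without closed orbits, Lemma \ref{lem:no_center_on_boundary_of_invariant_sets} gives $\mathcal{V}\cap\partial\mathcal{S}=\emptyset$, contradicting $\emptyset\neq\Gamma\subseteq\mathcal{V}\cap\partial\mathcal{S}$. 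Otherwise $w_+(\Gamma)$ and $w_-(\Gamma)$ are single equilibria; as $\partial\mathcal{S}$ is closed and flow-invariant, these equilibria lie in $\partial\mathcal{S}\cap F^{-1}(\{0\})=\{a\}$, so $\Gamma$ is homoclinic in $a$, and it remains to exclude this case.

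For the homoclinic case, pick $x\in\Gamma$. Since $\overline{\Int(\Xi\cup\{a\})}\setminus\{a\}\subseteq\mathcal{S}=\mathring{\mathcal{S}}$ and $x\in\partial\mathcal{S}$, necessarily $x\in\Ext(\Xi\cup\{a\})$, and by connectedness of $\Gamma$ (disjoint from $\Xi\cup\{a\}$) the whole orbit satisfies $\Gamma\subset\Ext(\Xi\cup\{a\})$. Fix $\xi_0\in\Xi$; then $d_0:=\operatorname{dist}(\xi_0,\Gamma\cup\{a\})>0$ because $\Gamma\cup\{a\}$ is a compact Jordan curve not containing $\xi_0$. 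I apply Lemma \ref{lemma:globalSector_nested_orbits} at $x$ with $\varepsilon:=d_0/2$, shrinking the resulting $\delta>0$ so that also $\mathcal{B}_\delta(x)\subset\Ext(\Xi\cup\{a\})$: every orbit through $\mathcal{B}_\delta(x)$ is then homoclinic in $a$ and, if distinct from $\Gamma$, nested with $\Gamma$, with an associated crescent region $H\subset\mathcal{U}_\varepsilon(\Gamma)$ and $\partial H=\Gamma\cup\hat\Gamma\cup\{a\}$. Because $x\in\partial\mathcal{S}$ there is $y\in\mathcal{B}_\delta(x)\cap\mathcal{S}$; put $\hat\Gamma:=\Gamma(y)$, which differs from $\Gamma$ (equality would force $y\in\partial\mathcal{S}$). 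Since $y\in\Ext(\Xi\cup\{a\})$ it cannot lie in $\Xi\cup\Int(\Xi\cup\{a\})$, hence $y\in\mathcal{S}^\prime(\Xi)$, i.e. $\Xi\subset\Int(\hat\Gamma\cup\{a\})$. Now distinguish the two nesting possibilities. If $\hat\Gamma\subset\Int(\Gamma\cup\{a\})$ then $\Int(\hat\Gamma\cup\{a\})\subseteq\Int(\Gamma\cup\{a\})$, so $\Xi\subset\Int(\Gamma\cup\{a\})$ and $\Gamma\subseteq\mathcal{S}^\prime(\Xi)\subseteq\mathcal{S}$, contradicting $\Gamma\subset\partial\mathcal{S}$ and openness of $\mathcal{S}$. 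If instead $\Gamma\subset\Int(\hat\Gamma\cup\{a\})$, then $H=\Int(\hat\Gamma\cup\{a\})\setminus\overline{\Int(\Gamma\cup\{a\})}$; since $\Gamma$ and $\Xi$ are homoclinic orbits in $a$ with $\Gamma\subset\Ext(\Xi\cup\{a\})$, Definition \ref{def:globalSector}(iii) forces either $\Xi\subset\Int(\Gamma\cup\{a\})$ — giving again $\Gamma\in\mathcal{S}^\prime(\Xi)$, a contradiction — or $\Xi\subset\Ext(\Gamma\cup\{a\})$, whence $\Xi\subseteq\Int(\hat\Gamma\cup\{a\})\setminus\overline{\Int(\Gamma\cup\{a\})}=H\subseteq\mathcal{U}_\varepsilon(\Gamma)$ and therefore $\operatorname{dist}(\xi_0,\Gamma)<\varepsilon=d_0/2<d_0$, contradicting the choice of $d_0$. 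This exhausts all cases, so $\Gamma$ is unbounded. I expect this homoclinic case — in particular the distance estimate that prevents $\Xi$ from being "swallowed" into the thin tube $\mathcal{U}_\varepsilon(\Gamma)$, combined with the Jordan-curve nesting bookkeeping — to be the main obstacle of the proof.

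Finally, for unboundedness of $\mathcal{S}$ I would first note $\partial\mathcal{S}\neq\{a\}$: otherwise $\overline{\mathcal{S}}=\mathcal{S}\cup\{a\}$ is closed, so $U:=\mathbb{C}\setminus(\mathcal{S}\cup\{a\})$ is open; it is nonempty since the finite elliptic decomposition in $a$ provides, by \cite[Definition 4.1]{kainz2024local} (see Figure \ref{fig:local_elliptic_sector}), a characteristic orbit $\Gamma_0$ tending to $a$ in only one time direction, hence not homoclinic in $a$ and thus disjoint from the flow-invariant set $\mathcal{S}$, so $\emptyset\neq\Gamma_0\subseteq U$; but then $\mathbb{C}\setminus\{a\}=\mathcal{S}\sqcup U$ would be a disjoint union of two nonempty open sets, contradicting connectedness of $\mathbb{C}\setminus\{a\}$. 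Hence there is $p\in\partial\mathcal{S}\setminus\{a\}$; by the first part $\Gamma(p)$ is unbounded, and $\Gamma(p)\subseteq\partial\mathcal{S}$ by flow-invariance, so $\partial\mathcal{S}$ is unbounded. Since $\partial\mathcal{S}\subseteq\overline{\mathcal{S}}$, a bounded $\mathcal{S}$ would make $\partial\mathcal{S}$ bounded; therefore $\mathcal{S}$ is unbounded.
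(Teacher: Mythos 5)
Your argument follows essentially the same route as the paper's proof: assume a bounded orbit on $\partial\mathcal{S}\setminus\{a\}$, apply Theorem \ref{thm:central_tool}, rule out the periodic case via the center basin (Theorem \ref{thm:center_open} resp.\ Lemma \ref{lem:no_center_on_boundary_of_invariant_sets}), use Proposition \ref{prop:globalSector_Equilibria_on_boundary} to force the orbit to be homoclinic in $a$, and then exclude the homoclinic case with Lemma \ref{lemma:globalSector_nested_orbits} and the openness of $\mathcal{S}$. Where the paper is terse (``by openness of $\mathcal{S}$, this is impossible''), you supply the actual Jordan-curve bookkeeping: locating $\Gamma$ in $\Ext(\Xi\cup\{a\})$, identifying the nearby orbit $\hat\Gamma\subset\mathcal{S}'(\Xi)$, and splitting on the two nesting configurations, with the $\varepsilon=d_0/2$ distance estimate preventing $\Xi$ from lying in the crescent $H\subset\mathcal{U}_\varepsilon(\Gamma)$. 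That part is correct and is a genuine filling-in of the paper's one-line claim.

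The one step that is not justified is in your final paragraph: you assert that a characteristic orbit $\Gamma_0$ of the FED tends to $a$ in only one time direction and is therefore not homoclinic. The FED data from \cite[Definition 4.1, Proposition 4.3]{kainz2024local} only give that $\Gamma_0$ tends to $a$ in a definite direction in one time direction; nothing purely local prevents $\Gamma_0$ from returning to $a$ globally, so ``hence not homoclinic in $a$'' does not follow from what you cite (in the paper this fact is only available later, via Corollary \ref{cor:globalSector_final_corrollaries_1}, which itself depends on Proposition \ref{prop:globalSector_interior} and hence on this theorem). Fortunately the step is unnecessary: you do not need to exclude $\partial\mathcal{S}=\{a\}$, because in that case your own connectedness argument gives $\mathcal{S}=\C\setminus\{a\}$, which is already unbounded, so you may simply split into the two cases $\partial\mathcal{S}=\{a\}$ (done immediately) and $\partial\mathcal{S}\setminus\{a\}\not=\emptyset$ (unbounded boundary orbit inside $\overline{\mathcal{S}}$, so $\mathcal{S}$ unbounded). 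Alternatively, a correct witness for a point outside $\mathcal{S}\cup\{a\}$ would be a homoclinic orbit from a different local elliptic sector, whose closed interior lies in that other sector and hence can neither contain $\Xi$ nor lie inside $\Xi$. With either repair the proof is complete.
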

\begin{proof}
    Let $x\in\partial\mathcal{S}\setminus\{a\}$ and suppose that $\Gamma(x)\subset\partial\mathcal{S}\setminus\{a\}$ is bounded. The set $K:=\overline{\Gamma(x)}$ is compact and $\Gamma(x)\cup w_\pm(\Gamma(x))\subset K\subset\partial\mathcal{S}$. By using the holomorphy of $F$, we can apply Theorem \ref{thm:central_tool} to conclude that $\Gamma(x)$ either is a periodic orbit with exactly one equilibrium, a center $\tilde{a}$, in $\Int(\Gamma(x))$, or $w_+(\Gamma(x))$ and $w_-(\Gamma(x))$ each consist of exactly one equilibrium. If the first case occurs, by openness of the basin $\mathcal{V}$ in $\tilde{a}$, cf. Theorem \ref{thm:center_open}, there would be a point $y\in\mathcal{S}\cap\mathcal{V}$, $y\not=x$. This is impossible and the second case in Theorem \ref{thm:central_tool} must occur. Hence, by Proposition \ref{prop:globalSector_Equilibria_on_boundary}, $\Gamma(x)$ must be a homoclinic orbit with $w_\pm(\Gamma(x))=\{a\}$. Lemma \ref{lemma:globalSector_nested_orbits} gives us a homoclinic orbit $\hat{\Gamma}$ in $\mathcal{S}$ such that $\Gamma(x)$ and $\hat{\Gamma}$ are nested. However, by openness of $\mathcal{S}$, this is impossible. Thus, $\Gamma(x)$ cannot be homoclinic and must be unbounded. It follows that also $\mathcal{S}$ is unbounded.
\end{proof}
\begin{proposition}\label{prop:globalSector_interior}
    Let $\mathcal{S}:=\mathcal{S}(\Xi)$ be a global elliptic sector generated by the sector-forming orbit $\Xi$. It holds
    \begin{align}\label{eq:globalSector_interior}
	\mathcal{S}=\bigcup_{\mathclap{x\in\mathcal{S}}} \overline{\Int(\Gamma(x)\cup\{a\})}\setminus\{a\} =\bigcup_{\mathclap{x\in\mathcal{S}}}\Int(\Gamma(x)\cup\{a\})\text{.}
    \end{align}
    Moreover, all orbits in $\mathcal{S}$ are nested, i.e. for all $y_1,y_2\in\mathcal{S}$, $\Gamma(y_1)\not=\Gamma(y_2)$, there holds either $\Gamma(y_1)\subset\Int(\Gamma(y_2)\cup\{a\})$ or $\Gamma(y_2)\subset\Int(\Gamma(y_1)\cup\{a\})$.
\end{proposition}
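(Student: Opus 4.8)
The plan is to first establish the nestedness claim, since the two chains of equalities in \eqref{eq:globalSector_interior} will follow from it together with the openness result in Theorem \ref{thm:globalSector_open}. For the nestedness, let $y_1,y_2\in\mathcal{S}$ with $\Gamma(y_1)\neq\Gamma(y_2)$. Since $\mathcal{S}=\Xi\cup\Int(\Xi\cup\{a\})\cup\mathcal{S}'$, each of $\Gamma(y_1),\Gamma(y_2)$ is a homoclinic orbit in $a$ and each either equals $\Xi$, lies inside $\Int(\Xi\cup\{a\})$, or contains $\Xi$ in its interior (using that orbits inside $\Xi\cup\{a\}$ are themselves homoclinic by the sector-forming property, and that for $x\in\Int(\Xi\cup\{a\})$ one has $\Gamma(x)\subset\Int(\Xi\cup\{a\})$ by flow-invariance of the interior). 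In every combination, both orbits ``link'' through $\Xi$: either one of them is $\Xi$, or both are nested with $\Xi$, which already forces $\Gamma(y_1)$ and $\Gamma(y_2)$ to be comparable via the interiors once we know they are nested pairwise. The genuinely new input needed is that any two distinct homoclinic orbits in $\mathcal{S}$ are nested rather than mutually exterior; this I would obtain from Lemma \ref{lemma:globalSector_nested_orbits} by a connectedness argument: the set of points whose orbit is nested with $\Gamma(y_1)$ is open (Lemma \ref{lemma:globalSector_nested_orbits}(iii) applied along $\Gamma(y_2)$) and closed in $\mathcal{S}$, and $\mathcal{S}$ restricted to the relevant region is connected, so mutual exteriority cannot occur without producing a point violating the ``crescent'' containment in $\mathcal{U}_\varepsilon$, exactly as in the last step of the proof of Theorem \ref{thm:globalSector_open}.

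With pairwise nestedness in hand, the chain of equalities is essentially bookkeeping. For the inclusion $\mathcal{S}\subseteq\bigcup_{x\in\mathcal{S}}\Int(\Gamma(x)\cup\{a\})$, take $x\in\mathcal{S}$; by Theorem \ref{thm:globalSector_open} there is $y\in\mathcal{S}$ with $\Gamma(x)\subset\Int(\Gamma(y)\cup\{a\})$, and since $x\in\Gamma(x)\subset\Int(\Gamma(y)\cup\{a\})$ we are done (the case $x\in\Int(\Xi\cup\{a\})$ is immediate). The reverse inclusions $\bigcup_{x\in\mathcal{S}}\Int(\Gamma(x)\cup\{a\})\subseteq\bigcup_{x\in\mathcal{S}}\overline{\Int(\Gamma(x)\cup\{a\})}\setminus\{a\}\subseteq\mathcal{S}$ reduce to showing that for $x\in\mathcal{S}'$ one has $\overline{\Int(\Gamma(x)\cup\{a\})}\setminus\{a\}\subseteq\mathcal{S}$: every point $z$ in $\Int(\Gamma(x)\cup\{a\})$ lies either in $\Xi\cup\Int(\Xi\cup\{a\})$ or in the ``annular'' region between $\Xi$ and $\Gamma(x)$; in the latter case $\Gamma(z)$ is homoclinic (by Theorem \ref{thm:central_tool}, since $\overline{\Int(\Gamma(x)\cup\{a\})}$ is compact and contains no equilibrium other than $a$ by Proposition \ref{prop:globalSector_Equilibria_on_boundary}), and nestedness with $\Xi$ and $\Gamma(x)$ forces $\Xi\subset\Int(\Gamma(z)\cup\{a\})$, i.e. $z\in\mathcal{S}'$. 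Points of $\Gamma(x)$ itself lie in $\mathcal{S}$ by assumption.

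The main obstacle I expect is the ruling out of mutually exterior homoclinic orbits inside $\mathcal{S}$ — i.e. turning the local ``crescent'' statement of Lemma \ref{lemma:globalSector_nested_orbits} into a global dichotomy. The delicate point is that two homoclinic orbits could a priori be mutually exterior while both still containing $\Xi$ is impossible, but one containing $\Xi$ and the other contained in $\Int(\Xi\cup\{a\})$ is consistent only if they are in fact nested with $\Xi$ and hence with each other; one must carefully check that no configuration escapes this, using that $\Xi$ is common to the picture and that $\mathcal{S}$ is connected (which follows from \eqref{eq:globalSector_interior} once proved, so care is needed to avoid circularity — I would instead argue connectivity directly from $\mathcal{S}=\Xi\cup\Int(\Xi\cup\{a\})\cup\mathcal{S}'$ with every orbit of $\mathcal{S}'$ linked to $\Xi$). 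Handling this without circular dependence on \eqref{eq:globalSector_interior}, and making the ``annular region between $\Xi$ and $\Gamma(x)$'' precise via the crescent sets $H$ of Lemma \ref{lemma:globalSector_nested_orbits}, is where the real work lies.
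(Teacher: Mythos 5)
There is a genuine gap, and it sits at the two load-bearing points of your plan. First, in your argument for $\Int(\Gamma(x)\cup\{a\})\subset\mathcal{S}$ you apply Theorem \ref{thm:central_tool} after asserting that $\overline{\Int(\Gamma(x)\cup\{a\})}$ contains no equilibrium other than $a$ ``by Proposition \ref{prop:globalSector_Equilibria_on_boundary}''. That proposition only controls equilibria on $\partial\mathcal{S}$; whether the points of $\Int(\Gamma(x)\cup\{a\})$ belong to $\mathcal{S}$ or to its boundary is precisely what is being proved, so you cannot yet transfer its conclusion to this Jordan domain (for $x\in\mathcal{S}^\prime$ the definition of $\mathcal{S}^\prime$ does not exclude another equilibrium, a foreign center with its period annulus, etc., inside $\Gamma(x)\cup\{a\}$). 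Even granting a Poincar\'e--Bendixson dichotomy, you would still have to show the limit equilibrium is $a$ and that the resulting homoclinic orbit surrounds $\Xi$. The paper avoids all of this by a different mechanism which you never invoke: if some interior point were outside $\mathcal{S}$, then $\partial\mathcal{S}\cap\Int(\Gamma(x)\cup\{a\})\neq\emptyset$, and since such a boundary point is not $a$, Proposition \ref{prop:globalSector_Equilibria_on_boundary} (now legitimately applied) plus Theorem \ref{thm:globalSector_unbounded} force its orbit to be unbounded, contradicting confinement in the bounded Jordan domain. That unboundedness theorem is the tool that makes the reverse inclusion work non-circularly.

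Second, your nestedness-first strategy rests on an open-and-closed argument inside a connected set, and you correctly sense the danger: connectedness of $\mathcal{S}$ is obtained in the paper only \emph{after} \eqref{eq:globalSector_interior} (Theorem \ref{thm:globalSector_simply_connected}). Your proposed repair -- that $\mathcal{S}=\Xi\cup\Int(\Xi\cup\{a\})\cup\mathcal{S}^\prime$ is connected because every orbit of $\mathcal{S}^\prime$ is ``linked'' to $\Xi$ -- does not hold up: an orbit of $\mathcal{S}^\prime$ lies entirely in $\Ext(\Xi\cup\{a\})$ and its closure meets $\Xi\cup\Int(\Xi\cup\{a\})$ only at $a\notin\mathcal{S}$, so surrounding $\Xi$ does not give a path to $\Xi$ inside $\mathcal{S}$; a priori $\mathcal{S}^\prime$ could be a scattered family of loops with non-$\mathcal{S}$ points in between, and excluding that is again the content of \eqref{eq:globalSector_interior}. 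Likewise, closedness in $\mathcal{S}$ of the set of points nested with $\Gamma(y_1)$ is not justified, and in the case $y_1,y_2\in\Int(\Xi\cup\{a\})$ being each nested with $\Xi$ does not force them to be nested with each other. The paper resolves exactly this case with a different idea: it proves \eqref{eq:globalSector_interior} first, then observes that $\Gamma(y_1)$ is itself a sector-forming orbit, applies \eqref{eq:globalSector_interior} to the sector $\mathcal{S}(\Gamma(y_1))$, and derives the contradiction from there. To repair your proposal you would essentially have to reverse your order of proof and route the first half through Theorem \ref{thm:globalSector_unbounded}, which is what the paper does.
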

\begin{proof}
    This proof requires several case distinctions and is quite technical. A proof can be found in the appendix.
\end{proof}
% \begin{theorem}\label{thm:globalSector_simply_connected}
%     Let $\mathcal{S}:=\mathcal{S}(\theta_+,\theta_-,\Xi)$ be a global elliptic sector with adjacent definite directions $\theta_+,\theta_-\in\mathcal{E}(F,m)$, $\lambda(\theta_\pm)=\mp 1$, and a sector-forming orbit $\Xi$. $\mathcal{S}$ is connected, path-connected and simply connected.
% \end{theorem}
\begin{theorem}\label{thm:globalSector_simply_connected}
    Let $\mathcal{S}:=\mathcal{S}(\Xi)$ be a global elliptic sector generated by the sector-forming orbit $\Xi$. $\mathcal{S}$ is connected, path-connected and simply connected.
\end{theorem}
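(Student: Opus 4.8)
The plan is to mirror the structure of the proof of Theorem \ref{thm:center_simply_connected}, using the nestedness and the "filling" identity \eqref{eq:globalSector_interior} of Proposition \ref{prop:globalSector_interior} in place of the analogous facts for the period annulus. First I would establish connectedness and path-connectedness: by \eqref{eq:globalSector_interior} we may write $\mathcal{S}=\bigcup_{x\in\mathcal{S}}\bigl(\overline{\Int(\Gamma(x)\cup\{a\})}\setminus\{a\}\bigr)$, and each set in this union contains the sector-forming orbit $\Xi$ (indeed $\Xi\subset\Int(\Gamma(x)\cup\{a\})$ for $x\in\mathcal{S}'$, and $\Xi\subset\overline{\Int(\Xi\cup\{a\})}\setminus\{a\}$ for the "inner" part), so they all share, say, a fixed point of $\Xi$. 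Each piece $\overline{\Int(\Gamma(x)\cup\{a\})}\setminus\{a\}$ is connected — it is a closed Jordan domain with one boundary point removed, hence connected (in fact path-connected, being an open-ish subset of $\R[2]$ that is clearly arcwise connected). A union of connected sets with a common point is connected, cf. \cite[Theorem 23.3]{munkres2000topology}, and a union of path-connected sets with a common point is path-connected. Hence $\mathcal{S}$ is connected and path-connected.

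For simple connectedness I would again invoke Theorem \ref{thm:globalSector_open} to pass to the second form of \eqref{eq:globalSector_interior}, namely $\mathcal{S}=\bigcup_{x\in\mathcal{S}}\Int(\Gamma(x)\cup\{a\})$, which now expresses $\mathcal{S}$ as a union of \emph{open} sets $U_x:=\Int(\Gamma(x)\cup\{a\})$. Each $U_x$ is the interior of a Jordan curve, hence simply connected; and each $U_x$ contains $\Xi$, so in particular all the $U_x$ are path-connected and their pairwise (indeed arbitrary) intersections are nonempty. The key point needed to apply the Seifert–van Kampen theorem for infinite open covers, cf. \cite[Theorem 1.20]{hatcher2005algebraic}, is that \emph{all finite intersections} $U_{x_1}\cap\cdots\cap U_{x_k}$ are path-connected. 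Here the nestedness clause of Proposition \ref{prop:globalSector_interior} does the work: given $x_1,\dots,x_k$, the orbits $\Gamma(x_1),\dots,\Gamma(x_k)$ are totally ordered by the nesting relation, so there is a minimal one, say $\Gamma(x_j)$, with $U_{x_j}\subset U_{x_i}$ for all $i$; therefore $U_{x_1}\cap\cdots\cap U_{x_k}=U_{x_j}$, which is path-connected. With the covering by path-connected open sets, nonempty path-connected finite intersections, and each $U_x$ simply connected, the nerve version of Seifert–van Kampen gives $\pi_1(\mathcal{S})=0$, so $\mathcal{S}$ is simply connected.

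The main obstacle is ensuring the hypotheses of the infinite Seifert–van Kampen theorem are cleanly met — specifically the path-connectedness of all finite intersections — and this is exactly where I expect to lean hardest on Proposition \ref{prop:globalSector_interior}: without the total ordering of the orbits in $\mathcal{S}$ by nesting, a finite intersection of the $U_x$ need not be connected, and the argument would collapse. A secondary, more bookkeeping-level point is to double-check that $\Int(\Gamma(x)\cup\{a\})$ genuinely contains $\Xi$ for every $x\in\mathcal{S}$ (clear for $x\in\mathcal{S}'$ by definition, and for $x$ in the inner part one uses that $\Xi\cup\{a\}$ bounds that inner region), so that the common-point requirement and the nonemptiness of intersections are automatic. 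Everything else is routine point-set topology, and I would present it at roughly the same level of brevity as the proof of Theorem \ref{thm:center_simply_connected}.
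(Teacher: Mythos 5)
Your overall route is the paper's own: use \eqref{eq:globalSector_interior} to write $\mathcal{S}=\bigcup_{x\in\mathcal{S}}\Int(\Gamma(x)\cup\{a\})$ and feed this open cover of simply connected Jordan interiors into the Seifert--van Kampen theorem for infinite covers, exactly as in the proof of Theorem \ref{thm:center_simply_connected}. However, there is a genuine false step in your write-up: the claim that $\Xi\subset\Int(\Gamma(x)\cup\{a\})$ for \emph{every} $x\in\mathcal{S}$. This is true only for $x\in\mathcal{S}'$. For $x\in\Xi$ you get $\Int(\Gamma(x)\cup\{a\})=\Int(\Xi\cup\{a\})$, which is disjoint from $\Xi$; and for $x\in\Int(\Xi\cup\{a\})$ the orbit $\Gamma(x)$ is nested strictly inside $\Xi$, so again $U_x:=\Int(\Gamma(x)\cup\{a\})$ misses $\Xi$. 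This is precisely where the sector differs from the period annulus: in the center case every $\Int(\Gamma(x))$ contains the center $a$, whereas here $a$ sits on the \emph{boundary} of every $U_x$, and in fact $\bigcap_{x\in\mathcal{S}}U_x=\emptyset$ (for any $z\in\mathcal{S}$, the sets $U_x$ belonging to orbits nested inside $\Gamma(z)$ do not contain $z$). Consequently your common-point argument for connectedness and path-connectedness, your assertion that arbitrary intersections of the $U_x$ are nonempty, and the basepoint hypothesis of \cite[Theorem 1.20]{hatcher2005algebraic} all fail as stated. (The paper's one-line ``analogous'' proof glosses over the same point; your version makes the gap explicit by asserting the false containment.)

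The repair is already contained in the part of your plan that you got right, namely the total ordering by nesting from Proposition \ref{prop:globalSector_interior}. Since the family $\{U_x\}$ is totally ordered by inclusion, any two points of $\mathcal{S}$ lie in a common $U_x$, which gives connectedness and path-connectedness without any common point; and any loop in $\mathcal{S}$, being compact, is covered by finitely many $U_x$ and hence lies in the largest of them, which is the interior of a Jordan curve and therefore simply connected -- so $\pi_1(\mathcal{S})$ is trivial without invoking the common-basepoint form of van Kampen at all. If you prefer to keep the van Kampen formulation, restrict the cover to $\{U_y: y\in\mathcal{S}'\}$: this subfamily still covers $\mathcal{S}$ (by the second statement of Theorem \ref{thm:globalSector_open}, every $x\in\mathcal{S}$ lies in some $U_y$ with $\Xi\subset U_y$), every member contains $\Xi$, and pairwise intersections are again members by nesting, so a basepoint chosen on $\Xi$ makes all hypotheses of \cite[Theorem 1.20]{hatcher2005algebraic} legitimately satisfied.
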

\begin{proof}
    Analogous to the proof of Theorem \ref{thm:center_simply_connected}, this follows directly from \eqref{eq:globalSector_interior} together with the Seifert-van Kampen theorem for infinite open covers, cf. \cite[Theorem 1.20]{hatcher2005algebraic}.
    %Suppose, there exists a separation $(U,V)$ of $\mathcal{S}$, i.e. $\mathcal{S}\subset U\cup V$, where $U$ and $V$ are open, disjoint and have nonempty intersection with $\mathcal{S}$. Then $a$ can only be in one of these two sets. Assume w.l.o.g. $a\in U$. Choose $y\in V\cap\mathcal{S}$. Since $\Gamma(y)$ is connected, we get $\Gamma(y)\subset V$. Since $a\not\in\overline{V}$, it follows the contradiction $a\not\in w_+(\Gamma(y))\cup w_-(\Gamma(y))$. Hence, such a separation cannot exist and $\mathcal{S}$ is connected. By \cite[Proposition 12.25]{sutherland2009introduction}, $\mathcal{V}$ is even path-connected.
    %
    % Let $L\subset\mathcal{S}$ be a loop. Define $J_x:=\Gamma(x)\cup\{a\}$ for $x\in\mathcal{S}$. Due to the geometry described in Proposition \ref{prop:globalSector_interior}, we can use the same idea as in the proof of Theorem \ref{thm:center_simply_connected}. By Theorem \ref{thm:globalSector_open}, for all $x\in L$ exists $y_x\in\Ext(J_x)\cap\mathcal{S}$ such that $x\in\Int(J_{y_x})$. We can replace the periodic orbits $\Gamma(y_x)$, $x\in\mathcal{V}$, by the curves $J_{y_x}$, $x\in\mathcal{S}$. Thus, we can find a suitable finite open cover of $L$ and an outermost orbit $J\subset\mathcal{S}$ satisfying $a\in J$ and $L\subset\Int(J)$. By \eqref{eq:globalSector_interior}, we have $\Int(J\cup\{a\})\subset\mathcal{S}$, which is clearly simply connected. Hence $L$ can be continuously deformed to a point in $L\subset\mathcal{S}$. Since $\mathcal{S}$ is path-connected, it follows that $\mathcal{S}$ is even simply connected.
\end{proof}
We now address the independence of the global elliptic sector from a particular choice of the sector-forming orbit ensuring in some sense the well-definedness of the global elliptic sector.
\begin{lemma}\label{lem:globalSector_independent_sector-forming_orbit}
    Let $\mathcal{S}:=\mathcal{S}(\Xi)$ be a global elliptic sector generated by the sector-forming orbit $\Xi$. For all $x\in\mathcal{S}$ the orbit $\Gamma(x)$ is a sector-forming orbit in $a$ and $\mathcal{S}=\mathcal{S}(\Gamma(x))$, i.e. the global elliptic sector $\mathcal{S}$ does not depend on the particular choice of a sector-forming orbit.
\end{lemma}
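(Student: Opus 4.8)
The plan is to show the two set inclusions $\mathcal{S}(\Xi)\subseteq\mathcal{S}(\Gamma(x))$ and $\mathcal{S}(\Gamma(x))\subseteq\mathcal{S}(\Xi)$ after first verifying that $\Gamma(x)$ is itself a sector-forming orbit. For the latter: fix $x\in\mathcal{S}$. If $x\in\Xi$ then $\Gamma(x)=\Xi$ and there is nothing to do, so assume $x\in\mathcal{S}'$ or $x\in\Int(\Xi\cup\{a\})$. In either case $\Gamma(x)$ is homoclinic in $a$ by definition of $\mathcal{S}$. To see it is sector-forming I must check that every orbit through $\Int(\Gamma(x)\cup\{a\})$ is homoclinic in $a$. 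I would split according to whether $\Xi\subset\Int(\Gamma(x)\cup\{a\})$ (the case $x\in\mathcal{S}'$) or $\Gamma(x)\subset\overline{\Int(\Xi\cup\{a\})}$ (the case $x\in\Int(\Xi\cup\{a\})\cup\Xi$). In the first case, any $z\in\Int(\Gamma(x)\cup\{a\})$ lies either inside $\overline{\Int(\Xi\cup\{a\})}$, where $\Xi$ sector-forming gives homoclinicity, or in the crescent-shaped region between $\Xi$ and $\Gamma(x)$; for that region I invoke Proposition \ref{prop:globalSector_interior}, which says all orbits in $\mathcal{S}$ are nested and $\mathcal{S}=\bigcup_{w\in\mathcal{S}}\Int(\Gamma(w)\cup\{a\})$, so such a $z$ lies in $\mathcal{S}$ and hence $\Gamma(z)$ is homoclinic. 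In the second case $\Int(\Gamma(x)\cup\{a\})\subset\Int(\Xi\cup\{a\})$ and homoclinicity is immediate since $\Xi$ is sector-forming. Thus $\Gamma(x)$ is a sector-forming orbit.

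For the set equality, the cleanest route is to rewrite both sides using Proposition \ref{prop:globalSector_interior}: $\mathcal{S}(\Xi)=\bigcup_{w\in\mathcal{S}(\Xi)}\Int(\Gamma(w)\cup\{a\})$, and similarly once $\Gamma(x)$ is known to be sector-forming, $\mathcal{S}(\Gamma(x))=\bigcup_{w\in\mathcal{S}(\Gamma(x))}\Int(\Gamma(w)\cup\{a\})$. Here I would argue $\mathcal{S}(\Gamma(x))\subseteq\mathcal{S}(\Xi)$ first. Take $y\in\mathcal{S}(\Gamma(x))$. If $\Gamma(y)=\Gamma(x)$ then $y\in\mathcal{S}(\Xi)$ already. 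Otherwise $\Gamma(y)$ is homoclinic and either $\Gamma(y)\subseteq\Int(\Gamma(x)\cup\{a\})$ or $\Gamma(x)\subseteq\Int(\Gamma(y)\cup\{a\})$. In the first subcase, since $\Gamma(x)\in\mathcal{S}(\Xi)$ and $\mathcal{S}(\Xi)=\bigcup_{w}\Int(\Gamma(w)\cup\{a\})$ by Proposition \ref{prop:globalSector_interior}, we get $y\in\Int(\Gamma(x)\cup\{a\})\subseteq\mathcal{S}(\Xi)$. In the second subcase, I use that $\Xi\subseteq\overline{\Int(\Gamma(x)\cup\{a\})}$ (either $x\in\mathcal{S}'$, giving $\Xi\subset\Int(\Gamma(x)\cup\{a\})$, or $\Gamma(x)\subseteq\overline{\Int(\Xi\cup\{a\})}$; in the latter $\Gamma(x)\subseteq\Int(\Gamma(y)\cup\{a\})$ forces $\Xi\subseteq\Int(\Gamma(y)\cup\{a\})$ by nestedness of homoclinic orbits, since $\Xi$ and $\Gamma(y)$ cannot be mutually exterior when $\Gamma(x)$ sits inside $\Gamma(y)$ and intersects every neighborhood of $\Xi$), so $\Xi\subseteq\Int(\Gamma(y)\cup\{a\})$ and $y\in\mathcal{S}'(\Xi)\subseteq\mathcal{S}(\Xi)$. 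The reverse inclusion $\mathcal{S}(\Xi)\subseteq\mathcal{S}(\Gamma(x))$ is completely symmetric once $\Gamma(x)$ is established as sector-forming: one simply swaps the roles of $\Xi$ and $\Gamma(x)$, using that $\Xi\in\mathcal{S}(\Gamma(x))$ (which holds because $\Xi$ is homoclinic and nested with $\Gamma(x)$ in one of the two ways).

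The main obstacle I anticipate is the bookkeeping around the nestedness of homoclinic orbits — specifically ruling out the "mutually exterior" alternative at each branch point. The key leverage is the combination of Proposition \ref{prop:globalSector_interior} (all orbits of $\mathcal{S}$ are pairwise nested) with the observation that $\Gamma(x)$ always sits in a nested relation with $\Xi$ because $x\in\mathcal{S}$: if $x\in\Int(\Xi\cup\{a\})$ then $\Gamma(x)\subset\Int(\Xi\cup\{a\})$, while if $x\in\mathcal{S}'$ then $\Xi\subset\Int(\Gamma(x)\cup\{a\})$ by definition of $\mathcal{S}'$. Having pinned this down, every homoclinic orbit appearing in either $\mathcal{S}(\Xi)$ or $\mathcal{S}(\Gamma(x))$ is forced into a chain of nested curves through $\Xi$ and $\Gamma(x)$, and membership in the other set reduces to reading off the appropriate interior/exterior relation. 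I would present the argument by reducing everything to Proposition \ref{prop:globalSector_interior} and the definitions, rather than re-deriving nesting facts from scratch.
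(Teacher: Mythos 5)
Your proposal follows essentially the same route as the paper: both first use Proposition \ref{prop:globalSector_interior} to conclude that every orbit through a point of $\mathcal{S}$ is itself sector-forming, and then prove the set equality by a case analysis on the interior/exterior position of $\Gamma(y)$ relative to $\Xi$ and $\Gamma(x)$, with the nestedness part of Proposition \ref{prop:globalSector_interior} excluding the mutually exterior alternative; the paper handles the four inclusions by a w.l.o.g.\ reduction where you invoke symmetry, which is an inessential difference. One small slip to repair: in your second subcase (where $\Gamma(x)\subseteq\overline{\Int(\Xi\cup\{a\})}$ and $\Gamma(x)\subseteq\Int(\Gamma(y)\cup\{a\})$) it is \emph{not} forced that $\Xi\subseteq\Int(\Gamma(y)\cup\{a\})$, since $\Gamma(y)$ may lie strictly between $\Gamma(x)$ and $\Xi$, i.e.\ $\Gamma(y)\subset\Int(\Xi\cup\{a\})$; the justification via $\Gamma(x)$ ``intersecting every neighborhood of $\Xi$'' is only valid when $\Gamma(x)=\Xi$. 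The conclusion nevertheless survives, because in that missed branch $y\in\Int(\Xi\cup\{a\})\subseteq\mathcal{S}(\Xi)$ directly, so you should simply replace the claimed forcing by the trichotomy $\Gamma(y)\subseteq\overline{\Int(\Xi\cup\{a\})}\setminus\{a\}$, $\Xi\subset\Int(\Gamma(y)\cup\{a\})$, or mutual exteriority (the last being impossible since the two interiors share $\Gamma(x)$), each admissible branch landing $y$ in $\mathcal{S}(\Xi)$.
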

\begin{proof}
    Let $x\in\mathcal{S}$. By \eqref{eq:globalSector_interior}, all orbits in $\Int(\Gamma(x)\cup\{a\})$ are homoclinic and $\hat{\Xi}:=\Gamma(x)$ is indeed a sector-forming orbit in $a$.
    
    We define $\hat{\mathcal{S}}:=\mathcal{S}(\hat{\Xi})$. If $\hat{\Xi}=\Xi$, then obviously $\mathcal{S}=\hat{\mathcal{S}}$. We have to show $\mathcal{S}\subset\hat{\mathcal{S}}$ as well as $\hat{\mathcal{S}}\subset\mathcal{S}$ for the two cases $\hat{\Xi}\subset\Int(\Xi\cup\{a\})$ and $\Xi\subset\Int(\hat{\Xi}\cup\{a\})$. As these four proofs are all based on similar arguments, we assume w.l.o.g. $\hat{\Xi}\subset\Int(\Xi\cup\{a\})$ and show only $\mathcal{S}\subset\hat{\mathcal{S}}$. Let $y\in\mathcal{S}$. If $y\in\Xi\cup\hat{\Xi}\cup\Int(\hat{\Xi}\cup\{a\})$, then nothing is to show. Thus, assume $y\in\Ext(\hat{\Xi}\cup\{a\})\setminus\Xi$. If $y\in\Ext(\Xi\cup\{a\})$, then we have $\Xi\subset\Int(\Gamma(y)\cup\{a\})$ and thus $\hat{\Xi}\subset\Int(\Xi\cup\{a\})\subset\Int(\Gamma(y)\cup\{a\})$, i.e. $y\in\hat{\mathcal{S}}$. Additionally, if $y\in\Ext(\hat{\Xi}\cup\{a\})\cap\Int(\Xi\cup\{a\})$, then we can apply Proposition \ref{prop:globalSector_interior} to conclude $\hat{\Xi}\subset\Int(\Gamma(y)\cup\{a\})$, i.e. $y\in\hat{\mathcal{S}}$.
\end{proof}

\subsection{Final Corollaries}

Finally, at the end of this chapter, we supplement and enhance our topological study with several important corollaries, which confirm the intuitive idea of the geometrical structure of an equilibrium of order $m\ge2$, cf. \cite[Figure 2, Figure 3]{kainz2024local}.
\begin{corollary}\label{cor:globalSector_final_corrollaries_1}
    \it There exist exactly $2m-2$ distinct\footnote{This means that $\Gamma_1$ and $\Gamma_2$ are mutually exterior, if if they belong to two different global elliptic sectors.} global elliptic sectors in $a$, each located between two adjacent definite directions given by $\mathcal{E}(F,m)$. All homoclinic orbits are sector-forming orbits.%For all $x\in\C{}$ with $w_+(\Gamma(x))=w_-(\Gamma(x))=\{a\}$ there exist two adjacent definite directions $\theta_+,\theta_-\in\mathcal{E}(F,m)$ such that $\Gamma(x)$ is a sector-forming orbit for $\mathcal{S}(\theta_+,\theta_-,\Gamma(x))$.
\end{corollary}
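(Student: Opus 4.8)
The plan is to split the statement into three parts and drive them with the tools already at hand: Lemma~\ref{lem:existence_of_sector_forming_orbit} (existence of a sector-forming orbit between any adjacent pair of directions), Lemma~\ref{lem:globalSector_independent_sector-forming_orbit} (every orbit of a global elliptic sector generates the same sector), Proposition~\ref{prop:globalSector_interior} (orbits of a global elliptic sector are nested), Lemma~\ref{lemma:globalSector_nested_orbits} and \cite{kainz2024local} (local nesting, and local constancy of the pair of definite directions along homoclinic orbits), Theorem~\ref{thm:central_tool}, and the unboundedness of the "regions of influence" of all equilibria (Theorem~\ref{thm:center_open}, Theorem~\ref{thm:globalSector_unbounded}, and the analogous statement for node/focus basins, cf.\ \cite[Theorem~4.3]{broughan2003structure}), together with the FED of order $2m-2$ at $a$ from \cite{kainz2024local}. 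The three parts are: (a)~every homoclinic orbit in $a$ is a sector-forming orbit; (b)~there are at least $2m-2$ pairwise distinct global elliptic sectors; (c)~there are at most $2m-2$, each located between an adjacent pair of directions in $\mathcal{E}(F,m)$.

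For~(a), fix a homoclinic orbit $\Gamma$ in $a$. Since $\Gamma\cup\{a\}$ is flow-invariant, so is the bounded Jordan domain $\Int(\Gamma\cup\{a\})$. I first claim it contains no equilibrium besides the boundary point $a$: an interior equilibrium $b$ would be a center, a node/focus, or a multiple equilibrium, and its period annulus $\mathcal{V}(b)$, its basin, or a global elliptic sector of $b$ is in each case connected, unbounded, and disjoint from $\Gamma\cup\{a\}$ (it meets neither the non-equilibrium orbit $\Gamma$ nor contains the equilibrium $a\neq b$); being connected and disjoint from $\Gamma\cup\{a\}$, it would lie entirely inside the bounded set $\Int(\Gamma\cup\{a\})$, contradicting its unboundedness. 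Now take any $z\in\Int(\Gamma\cup\{a\})$; then $\overline{\Gamma(z)}$ is compact and contained in $\overline{\Int(\Gamma\cup\{a\})}$, so Theorem~\ref{thm:central_tool} applies. The periodic alternative would produce a center inside $\Int(\Gamma\cup\{a\})$ and is therefore excluded; hence $w_+(\Gamma(z))$ and $w_-(\Gamma(z))$ are single equilibria in $\overline{\Int(\Gamma\cup\{a\})}$, and the only equilibrium available there is $a$. Thus $w_\pm(\Gamma(z))=\{a\}$ for every $z\in\Int(\Gamma\cup\{a\})$, i.e.\ $\Gamma$ is a sector-forming orbit.

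For~(b), the FED supplies $2m-2$ local elliptic sectors $S_1,\dots,S_{2m-2}$, one between each adjacent pair of directions in $\mathcal{E}(F,m)$; by Lemma~\ref{lem:existence_of_sector_forming_orbit} choose in each $S_i$ a sector-forming orbit $\Xi_i$ with $\overline{\Int(\Xi_i\cup\{a\})}\setminus\{a\}\subset S_i$ whose two definite directions bound $S_i$. If $\mathcal{S}(\Xi_i)=\mathcal{S}(\Xi_j)$ with $i\neq j$, then $\Xi_j\subset\mathcal{S}(\Xi_i)$, so by Proposition~\ref{prop:globalSector_interior} the orbits $\Xi_i,\Xi_j$ are nested, say $\Xi_j\subset\Int(\Xi_i\cup\{a\})\subset S_i$; but also $\Xi_j\subset S_j$, whence $\Xi_j\subset S_i\cap S_j$, which is contained in a single characteristic (boundary) orbit of the FED and therefore cannot equal the interior homoclinic orbit $\Xi_j$ — a contradiction. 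Hence the $\mathcal{S}(\Xi_i)$ are pairwise distinct (indeed mutually exterior). For~(c), let $\mathcal{S}(\Xi)$ be any global elliptic sector; its generator $\Xi$ is by definition homoclinic in $a$. Applying the no‑interior‑equilibrium claim of~(a) to $\Xi$ itself, together with the unboundedness of the characteristic orbits of the FED (cf.~\cite{kainz2024local}), one sees that $\Xi$ cannot enclose a characteristic orbit, so the two definite directions of $\Xi$ must be an adjacent pair in $\mathcal{E}(F,m)$; consequently $\Xi$ lies near $a$ inside the corresponding local elliptic sector $S_i$. Then $\Xi$ and $\Xi_i$ are two homoclinic orbits lying near $a$ inside $S_i$ with the same pair of definite directions; their Jordan curves touch only at $a$ and their interior wedges near $a$ lie on the same side of $a$, so they are nested, and exactly as in~(b) (via Lemma~\ref{lem:globalSector_independent_sector-forming_orbit}) this yields $\mathcal{S}(\Xi)=\mathcal{S}(\Xi_i)$. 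Therefore every global elliptic sector is one of the $2m-2$ sectors $\mathcal{S}(\Xi_i)$, and with~(b) there are exactly $2m-2$. Finally, $\mathcal{S}(\Xi_i)$ is open and connected (Theorems~\ref{thm:globalSector_open} and~\ref{thm:globalSector_simply_connected}), and by Lemma~\ref{lemma:globalSector_nested_orbits} the pair of definite directions in which an orbit of $\mathcal{S}(\Xi_i)$ tends to $a$ is locally constant, hence constant and equal to the adjacent pair bounding $S_i$.

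The main obstacle is part~(a): excluding any "foreign" recurrence inside $\Int(\Gamma\cup\{a\})$ — a periodic orbit, a second equilibrium, or an orbit escaping to infinity — which is precisely where holomorphy enters decisively, through Theorem~\ref{thm:central_tool} and the unboundedness of the region of influence of every equilibrium. A secondary, purely local point is the input from \cite{kainz2024local} used in~(c): that the characteristic orbits of the FED are unbounded and that a homoclinic orbit at $a$ tends to $a$ along definite directions, which combine with~(a) to force those two directions to be an adjacent pair in $\mathcal{E}(F,m)$.
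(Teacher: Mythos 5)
Your overall route is genuinely different from the paper's (the paper never proves your step (a) directly; it builds the $2m-2$ sectors from the local FED objects $\Xi_i$ and uses nestedness), but two of your steps rest on inputs that are not available, and one of them is false as stated. First, in (a) you exclude an interior node/focus $b$ of $\Int(\Gamma\cup\{a\})$ by invoking the unboundedness of its basin $\mathcal{N}(b)$. At this point of the paper that fact is not at your disposal: it is Theorem \ref{thm:nf_unbounded}, which is proved \emph{after} and \emph{by means of} Corollary \ref{cor:globalSector_final_corrollaries_1} (the homoclinic case on $\partial\mathcal{N}$ is handled there precisely by citing this corollary), and it also needs Proposition \ref{prop:nf_no_isolated_equilibria}. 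Citing \cite[Theorem 4.3]{broughan2003structure} instead does not help, since the paper's explicit point is that that proof has a gap. So your node/focus case is circular relative to the paper's architecture. It can be repaired without unboundedness of $\mathcal{N}(b)$, e.g.\ by noting that $\mathcal{N}(b)$ is open and (by your connectedness argument) contained in the bounded invariant set $\Int(\Gamma\cup\{a\})$, taking an orbit through a point of $\partial\mathcal{N}(b)$, applying Theorem \ref{thm:central_tool} to its compact closure and then using the local structure of the limiting equilibrium (center/FED/node--focus) to force nearby points of $\mathcal{N}(b)$ to converge to an equilibrium different from $b$; but as written the step is a genuine gap.

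Second, in (c) the adjacency of the two definite directions of an arbitrary generator $\Xi$ is deduced from ``the unboundedness of the characteristic orbits of the FED (cf.\ \cite{kainz2024local})''. No such statement is proved there, and it is false in general: the FED is a purely local structure, and a characteristic orbit may well be bounded, e.g.\ heteroclinic to a node or focus, exactly as in the paper's example $F(x)=x^2(x-1)(x-\mathrm{i})(x-1-\mathrm{i})$, where orbits tending to $0$ along a definite direction continue to the attracting equilibria. Hence ``$\Xi$ cannot enclose a characteristic orbit'' does not follow; indeed, if $\Xi$ enclosed one, your own step (a) would only tell you that this orbit is homoclinic, which is no contradiction by itself. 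The adjacency claim is true, but needs a different argument, for instance: if $\Xi$ approached $a$ along non-adjacent directions, then near $a$ its interior would contain points of two distinct local elliptic sectors, hence (by invariance of $\Int(\Xi\cup\{a\})$ and \eqref{eq:globalSector_interior}) two of the mutually exterior orbits $\Xi_i,\Xi_j$ would both lie in $\mathcal{S}(\Xi)$, contradicting the nestedness statement of Proposition \ref{prop:globalSector_interior}. This is essentially how the paper argues, working directly with the local FED geometry of \cite[Proposition 4.3, Definition 4.1]{kainz2024local}, which is why its proof needs neither the basins of Section 4 nor any global property of characteristic orbits. Your parts (b) and the nestedness argument identifying $\mathcal{S}(\Xi)$ with some $\mathcal{S}(\Xi_i)$ are fine.
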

\begin{proof}
    By \cite[Theorem 4.4]{kainz2024local}, there is a FED of order $d:=2m-2$ in $a$. We use the geometrical objects $\Gamma$, $\Gamma_i$ and $p_i$, $i\in\{1,\ldots,d\}$, defined by \cite[Definition 4.1 d)]{kainz2024local}, cf Figure \ref{fig:local_elliptic_sector}. We order the definite directions $\mathcal{E}(F,m)=\{\theta_1,\ldots,\theta_d\}$ in such a way that for all $i\in\{1,\ldots,d\}$ the local elliptic sector with characteristic orbits $\Gamma_i$ and $\Gamma_{i+1}$ lies between $\theta_i$ and $\theta_{i+1}$, i.e. $\Gamma_i$ tends to $a$ in the definite direction $\theta_i$. By using \cite[Definition 4.1 c) (iv)]{kainz2024local}, there exist points $E_i\in\Gamma(p_i,p_{i+1})$ such that $\Xi_i:=\Gamma(E_i)$ is a sector-forming orbit with definite directions $\theta_i,\theta_{i+1}$ for all $i\in\{1,\ldots,d\}$. Moreover, since all local sectors in $a$ have pairwise empty intersection except for the characteristic orbits and $a$, these sector-forming orbits are pairwise mutually exterior. This leads to $d$ global elliptic sectors, generated by the $d$ pairwise mutually exterior sector-forming orbits $\Xi_1,\ldots,\Xi_d$. On the one hand, due to the local geometry of $a$ it is clear that each global elliptic sector admits at least one of the orbits $\Xi_1,\ldots,\Xi_d$. On the other hand, since all orbits in a global elliptic sector are nested, it is not possible to find two orbits $\Xi_i,\Xi_j$, $i\not=j$, lying in the same global elliptic sector. Hence, we get indeed exactly $d$ distinct global elliptic sectors in $a$. The rest follows from the geometry described in \cite[Proposition 4.3]{kainz2024local}.
\end{proof}
We have proved Corollary \ref{cor:globalSector_final_corrollaries_1} by using the geometry of the local structure in $a$. This structure is defined by a FED, which is not uniquely determined, i.e. there exist several FEDs in $a$. However, we can now see that the global structure is the unique globalization of this local structure.

In the following, we consider the case $m\ge 3$. In this setting, the global elliptic sector can be defined independently of sector-forming orbits, solely by means of the chosen adjacent definite directions. For this purpose, we require the function $\lambda:\mathcal{E}(F,m)\to\{-1,1\}$,
\begin{align*}
    \lambda(\theta):=\cos(\argg(F^{(m)}(a))+\theta(m-1))
\end{align*}
to determine the time direction (positive or negative) in which the orbits tend to $a$ in the FED, cf. \cite[Proposition 4.3 (iii)]{kainz2024local} as well as the calculation of $\lambda_i$ on page 11 in this reference.
\begin{definition}
    Let $m\ge 3$.
    \begin{itemize}
        \item[(i)] Let $\Gamma\subset\C$ be an orbit and $x\in\Gamma$. We call $\Gamma$ $(\theta_+,\theta_-)$-elliptic in $a$, if there exist $\theta_+,\theta_-\in\mathcal{E}(F,m)$ such that $w_+(\Gamma)=w_-(\Gamma)=\{a\}$, $\arg(\Phi(t,x)-a)=\theta_{+}$ for $t\to\infty$ and $\arg(\Phi(t,x)-a)=\theta_{-}$ for $t\to-\infty$, that is, $\Gamma$ is homoclinic and tends to $a$ in the definite direction $\theta_\pm$ for $t\to\pm\infty$.
        \item[(ii)] Let $\theta_+,\theta_-\in\mathcal{E}(F,m)$ be two adjacent definite directions satisfying $\lambda(\theta_\pm)=\mp 1$.\footnote{If the corresponding local elliptic sector between $\theta_+$ and $\theta_-$ has clockwise (counterclockwise) direction, then $\theta_{-(+)}=\theta_{+(-)}+\frac{\pi}{m-1}$.} We define the set
        \begin{align}\label{eq:globalSector_m_3}
        \mathcal{S}(\theta_+,\theta_-):=\left\{x\in\C{}:\Gamma(x)\text{ is }(\theta_+,\theta_-)\text{-elliptic in }a\right\}\text{.}
    \end{align}
    \end{itemize}
\end{definition}
\newpage
\begin{corollary}
    \it Let $m\ge3$.
    \begin{itemize}
        \item[(i)] Let $\mathcal{S}:=\mathcal{S}(\Xi)$ be a global elliptic sector generated by the sector-forming orbit $\Xi$. Then there exist $\theta_+,\theta_-\in\mathcal{E}(F,m)$ such that
        \begin{align}\label{eq:cor_sector_1}
            \mathcal{S}(\Xi)=\mathcal{S}(\theta_+,\theta_-)\text{.}
        \end{align}
        \item[(ii)] Let $\theta_+,\theta_-\in\mathcal{E}(F,m)$ be two adjacent definite directions satisfying $\lambda(\theta_\pm)=\mp 1$. Then there exists a sector forming orbit $\Xi$ such that
        \begin{align}\label{eq:cor_sector_2}
            \mathcal{S}(\theta_+,\theta_-)=\mathcal{S}(\Xi)\text{.}
        \end{align}
    \end{itemize}
\end{corollary}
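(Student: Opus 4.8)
The plan is to prove the two directions essentially by matching up the combinatorial data (a pair of adjacent definite directions with the correct sign condition on $\lambda$) with the geometric data (a sector-forming orbit), using the enumeration already established in Corollary \ref{cor:globalSector_final_corrollaries_1}. For part (i), I would start with the global elliptic sector $\mathcal{S}=\mathcal{S}(\Xi)$ and recall from the proof of Lemma \ref{lem:existence_of_sector_forming_orbit} (and from \cite[Proposition 4.3]{kainz2024local}) that the sector-forming orbit $\Xi$ lies in a unique local elliptic sector of the FED in $a$, hence has two well-defined adjacent definite directions $\theta_+,\theta_-\in\mathcal{E}(F,m)$ with $\arg(\Phi(t,\xi)-a)\to\theta_\pm$ as $t\to\pm\infty$ for $\xi\in\Xi$. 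The sign convention $\lambda(\theta_\pm)=\mp1$ is exactly the statement that the orbits of this local sector tend to $a$ in positive time along $\theta_+$ and in negative time along $\theta_-$, which is \cite[Proposition 4.3 (iii)]{kainz2024local}; so $\theta_+,\theta_-$ do satisfy the hypothesis of the definition of $\mathcal{S}(\theta_+,\theta_-)$. It then remains to show $\mathcal{S}(\Xi)=\mathcal{S}(\theta_+,\theta_-)$ as sets.

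For the set equality I would argue both inclusions. If $x\in\mathcal{S}(\Xi)$, then by \eqref{eq:globalSector_interior} together with Proposition \ref{prop:globalSector_interior} the orbit $\Gamma(x)$ is homoclinic in $a$ and nested with $\Xi$; moreover, by Lemma \ref{lemma:globalSector_nested_orbits} applied along $\Xi$ (and the stability of definite directions in \cite[Proposition 4.3 (iii)]{kainz2024local}), every orbit sufficiently close to, and nested with, $\Xi$ shares the same pair of definite directions $\theta_+,\theta_-$. The point is to propagate this from a neighborhood of $\Xi$ to all of $\mathcal{S}$: since all orbits in $\mathcal{S}$ are nested (Proposition \ref{prop:globalSector_interior}) and $\mathcal{S}$ is connected (Theorem \ref{thm:globalSector_simply_connected}), the set of orbits in $\mathcal{S}$ having definite directions $(\theta_+,\theta_-)$ is both open and closed in $\mathcal{S}$ and nonempty, hence all of $\mathcal{S}$; therefore $\Gamma(x)$ is $(\theta_+,\theta_-)$-elliptic and $x\in\mathcal{S}(\theta_+,\theta_-)$. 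Conversely, if $x\in\mathcal{S}(\theta_+,\theta_-)$, then $\Gamma(x)$ is homoclinic and, being a homoclinic orbit, is a sector-forming orbit by Corollary \ref{cor:globalSector_final_corrollaries_1}; it lies in the unique global elliptic sector containing the local elliptic sector between $\theta_+$ and $\theta_-$, which by the uniqueness statement in Corollary \ref{cor:globalSector_final_corrollaries_1} and Lemma \ref{lem:globalSector_independent_sector-forming_orbit} is exactly $\mathcal{S}(\Xi)$; hence $x\in\mathcal{S}(\Xi)$.

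For part (ii), given two adjacent definite directions with $\lambda(\theta_\pm)=\mp1$, I would invoke the construction in the proof of Corollary \ref{cor:globalSector_final_corrollaries_1}: this pair bounds a local elliptic sector of the FED, and by \cite[Definition 4.1 c)]{kainz2024local} there is a point $E$ in that sector whose orbit $\Xi:=\Gamma(E)$ is sector-forming with definite directions $\theta_+,\theta_-$. Then part (i) applied to this $\Xi$ yields $\mathcal{S}(\Xi)=\mathcal{S}(\theta_+,\theta_-)$, which is \eqref{eq:cor_sector_2}.

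The main obstacle I expect is the propagation step in part (i): showing that the definite directions are constant across the whole (possibly unbounded) global elliptic sector, not just near $\Xi$. The clean way is the open–closed argument sketched above — openness comes from the local stability of definite directions (\cite[Proposition 4.3 (iii)]{kainz2024local}) combined with Lemma \ref{lemma:globalSector_nested_orbits}, and closedness requires checking that a limit of orbits with definite directions $(\theta_+,\theta_-)$ that is still homoclinic in $a$ again has those directions, which follows because $\mathcal{E}(F,m)$ is finite and the possible limiting directions are isolated, so a small perturbation cannot jump to a different pair. Care is needed to ensure the limiting orbit does not escape $\mathcal{S}$ or degenerate, but Proposition \ref{prop:globalSector_interior} and the nesting structure keep everything inside a single sector, so this does not actually arise.
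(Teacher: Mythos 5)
Your argument is correct and takes essentially the same route as the paper: both parts reduce to Corollary \ref{cor:globalSector_final_corrollaries_1} (exactly $2m-2$ distinct global elliptic sectors, each lying between a pair of adjacent definite directions, with every homoclinic orbit sector-forming), with $m\ge3$ guaranteeing that an adjacent pair $(\theta_+,\theta_-)$ singles out a unique sector. Your extra open--closed propagation of the definite directions across $\mathcal{S}$ is a more explicit justification of a step the paper simply delegates to that corollary, not a genuinely different approach.
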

\begin{proof}
    The key point underlying this result is that, for $m\ge 3$, there exist more than two definite directions. Since each of the $2m-2$ distinct unbounded global elliptic sectors lies between two adjacent definite directions, there always exists exactly one global elliptic sector $\mathcal{S}_0$ between two adjacent arbitrarily chosen angles $\theta_+$ and $\theta_-$, cf. Corollary \ref{cor:globalSector_final_corrollaries_1}. Moreover, by applying the second statement in Corollary \ref{cor:globalSector_final_corrollaries_1}, all $(\theta_+,\theta_-)$-elliptic orbits must lie in one of these $2m-2$ global elliptic sectors. This shows \eqref{eq:cor_sector_1} and \eqref{eq:cor_sector_2}.
\end{proof}

This second corollary shows that we need to choose a sector-forming orbit only in the case $m=2$. Here we have only two definite directions, which requires the choice of one arbitrary homoclinic orbit to specify one of the two global elliptic sectors. The two sectors cannot be distinguished on the basis of the definite directions alone. If we consider the set on the right hand side of \eqref{eq:globalSector_m_3} in the case $m=2$, then this set is still open and unbounded, but not connected. The set would become connected (but no longer open) only by adding the equilibrium $a$. For this reason, the construction in Definition \ref{def:globalSector} is based on the notion of a sector-forming orbit, which is defined independently of the choice of definite directions.

We observe that the boundaries of two adjacent global elliptic sectors in general do not have to coincide near $a$, as it is the case for the characteristic orbits in a FED. However, if they indeed coincide between all adjacent sectors, the local structure of the FED can even be transferred to the global phase portrait. The question therefore arises as to what the global geometric structure looks like in a region between two adjacent global elliptic sectors having no common boundary near $a$. How many separatrices can tend to a multiple equilibrium (either in positive or negative time)? This question takes up the discussions at the end of \cite[Example 4.7]{kainz2024local} and \cite[Example 4.8]{kainz2024local}. We will discuss these issues in Chapter 5.

Finally, we conclude by addressing the question of how the global elliptic sector fits into the theory of canonical regions.

\begin{corollary}
    \it Let $\mathcal{S}:=\mathcal{S}(\Xi)$ be a global elliptic sector generated by the sector-forming orbit $\Xi$. $\mathcal{S}$ is a canonical region of strip type. There exists a global smooth transversal through $\mathcal{S}$, i.e. an open smooth arc crossing each orbit in $\mathcal{S}$ exactly once.
\end{corollary}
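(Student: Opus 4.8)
The plan is to verify the two defining properties of a canonical region of strip type in the sense of Markus and Neumann: first, that $\mathcal{S}$ is a flow-invariant open set on which the flow is parallelizable, i.e. flow-equivalent to a product flow on a strip $\R\times\R$ (or equivalently, admits a global section met exactly once by each orbit); second, that the induced quotient (orbit) space is homeomorphic to $\R$, which is what distinguishes the strip type from the annular, radial, and spiral types. The key structural facts are already in hand: $\mathcal{S}$ is open (Theorem \ref{thm:globalSector_open}), connected and simply connected (Theorem \ref{thm:globalSector_simply_connected}), flow-invariant with $\mathcal{S}=\bigcup_{x\in\mathcal{S}}\Gamma(x)$ (Proposition \ref{prop:globalSector_invariant}), it contains no equilibria and no periodic orbits (every orbit in $\mathcal{S}$ is homoclinic to $a$, hence non-periodic and non-constant), and crucially, by Proposition \ref{prop:globalSector_interior}, all orbits in $\mathcal{S}$ are linearly ordered by nesting: $\Gamma(y_1)\subset\Int(\Gamma(y_2)\cup\{a\})$ or vice versa.

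First I would invoke the general theory (e.g. \cite[Theorem 5.2]{markus1954global} or the parallelizability results in \cite[p. 47 ff.]{neumann1976global}): a connected flow-invariant open subset of the plane containing no equilibria, on which every orbit is non-periodic and the orbit space is Hausdorff, is parallelizable, hence a canonical region. The orbit space being Hausdorff is where the nesting order does the work — I would show that the map sending $x\in\mathcal{S}$ to its orbit $\Gamma(x)$, with the ordering from Proposition \ref{prop:globalSector_interior}, is an order isomorphism onto an ordered set that is order-complete and has no jumps, hence order-isomorphic (and homeomorphic in the quotient topology) to an open interval, i.e. to $\R$. Concretely, fixing a base point and using the nesting of the interiors $\Int(\Gamma(x)\cup\{a\})$ as a nested exhaustion on one side and a decreasing chain on the other, openness of $\mathcal{S}$ (Theorem \ref{thm:globalSector_open} explicitly supplies, for each $x$, an orbit strictly outside $\Gamma(x)$ whose interior contains it, and Lemma \ref{lemma:globalSector_nested_orbits} supplies nested orbits arbitrarily close on the inside) shows the order is dense and without endpoints in $\mathcal{S}$, and the decomposition \eqref{eq:globalSector_interior} shows it is connected/complete. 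Since the orbit space is homeomorphic to $\R$ rather than $S^1$, this pins down the strip (parallel) type as opposed to the annular type, and the absence of equilibria in $\mathring{\mathcal{S}}$ and on $\partial\mathcal{S}\setminus\{a\}$ (Proposition \ref{prop:globalSector_Equilibria_on_boundary}) rules out the radial and spiral types. The global smooth transversal is then the image of a smooth section of the parallelizing flow-box coordinates, an open smooth arc meeting each orbit exactly once; one can take it, for instance, as a short transversal arc through the sector-forming orbit $\Xi$ extended along the flow, using that each orbit in $\mathcal{S}$ is crossed because of the nesting and that it is crossed only once because orbits are non-periodic.

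The main obstacle I expect is making the identification of the orbit space with $\R$ fully rigorous in the quotient topology rather than just as an ordered set — one must check that the quotient topology agrees with the order topology, which amounts to producing, for each orbit, a genuine flow-box neighborhood in $\mathcal{S}$ whose plaques are precisely the nearby orbits, and this in turn relies on continuity of the flow together with the nesting to guarantee that a small transversal disk at a point of an orbit meets each nearby orbit in a single point. A secondary subtlety is the behavior near $a$: although $a\notin\mathcal{S}$, every orbit accumulates at $a$ in both time directions, so the ``ends'' of the strip are both pinched at $a$; I would note that this is irrelevant to the canonical-region classification, which concerns only $\mathring{\mathcal{S}}=\mathcal{S}$ minus its boundary, and that the parallelizability statement is a statement about $\mathcal{S}$ alone. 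Modulo these points, the corollary follows by assembling the already-established topological properties with the cited Markus–Neumann classification, so I would keep the proof short and reference-driven rather than reconstructing the parallelization from scratch.
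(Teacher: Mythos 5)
Your route --- establishing parallelizability of $\mathcal{S}$ directly, via the nesting order of Proposition \ref{prop:globalSector_interior} making the orbit space Hausdorff and order-isomorphic to $\mathbb{R}$ --- is a reasonable and more self-contained way to see that $\mathcal{S}$ is a parallel region of strip type admitting a global transversal, and it differs from the paper, which argues much more briefly by locating $\mathcal{S}$ inside the Markus canonical-region decomposition. However, there is a genuine gap at the pivot ``parallelizable, hence a canonical region.'' A canonical region in the Markus--Neumann sense is a connected component of the complement of the separatrix skeleton, so it is by definition maximal, and parallelizability of an open invariant set does not give maximality: any proper open invariant sub-union of orbits of a strip region (for instance $\Int(\Gamma(x)\cup\{a\})\setminus\{a\}$ for a single $x\in\mathcal{S}$) is again parallelizable with orbit space an interval, yet is not a canonical region. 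Nothing in your argument shows that the canonical region $R$ containing $\mathcal{S}$ does not extend past $\partial\mathcal{S}$, and that is precisely the content of the corollary's first claim.

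To close the gap you need the boundary information already established earlier in the paper, which is exactly how its proof proceeds: since $\mathcal{S}$ is open, connected and contains no separatrices, it lies in a single canonical region $R$; and since every orbit of $\partial\mathcal{S}\setminus\{a\}$ is unbounded (Theorem \ref{thm:globalSector_unbounded}) whereas the orbits of $\mathcal{S}\subset R$ are homoclinic in $a$ and orbits of one canonical region share the same limit behavior, no boundary orbit can belong to $R$; hence $R\cap\partial\mathcal{S}=\emptyset$ and $R=\mathcal{S}$. With this maximality step added, your parallelization argument (or simply the citation of Markus' results, as in the paper) yields the strip type and the global smooth transversal; as written, though, the proposal proves only that $\mathcal{S}$ is a parallelizable region of strip character, not that it is a canonical region.
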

\begin{proof}
    By openness of $\mathcal{S}$, all orbits in $\mathcal{S}$ are part of a strip canonical region $R$. Thus, there are no separatrices in $\mathcal{S}$. Since $\partial\mathcal{S}$ does not have any homoclinic orbits, we have $\partial\mathcal{S}\cap R=\emptyset$, i.e. $R=\mathcal{S}$. The rest follows directly from \cite[Theorem 5.1]{markus1954global} and \cite[Theorem I]{markus1954globalintegrals}.
\end{proof}

\section{\bf Basin of nodes and foci}
In this chapter, we consider an entire function $F\in\Hol{}(\C)$, $F\not\equiv0$, and a node or focus $a\in\C$ of \eqref{eq:planarODE}. We provide detailed proofs of the topological results given in \cite[Theorem 4.3]{broughan2003structure}.
\begin{definition}\label{def:node_focus}
    The \textit{stable (unstable) basin of attraction (repulsion)} $\mathcal{N}$ of $F$ in $a$ is defined as
    \begin{align*}
        \mathcal{N}:=\left\{x\in\C:w_{+(-)}(\Gamma(x))=\{a\}\right\}.
    \end{align*}
    The boundary of $a$ is defined as the boundary of $\mathcal{N}$.
\end{definition}
\begin{proposition}\label{prop:nf_invariant}
    $\mathcal{N}$ and $\partial\mathcal{N}$ are flow-invariant. It holds
    \begin{align*}
		\mathcal{N}= \bigcup_{\mathclap{x\in\mathcal{N}}}\Gamma(x)\text{.}
    \end{align*}
\end{proposition}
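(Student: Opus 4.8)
The plan is to mirror the structure of the proof of Proposition \ref{prop:center_invariant}, since the statement is the direct analogue for nodes and foci. First I would establish flow-invariance of $\mathcal{N}$: if $x\in\mathcal{N}$ and $y\in\Gamma(x)$, then $\Gamma(y)=\Gamma(x)$ as sets, so $w_{+(-)}(\Gamma(y))=w_{+(-)}(\Gamma(x))=\{a\}$, hence $y\in\mathcal{N}$. This immediately gives $\mathcal{N}=\bigcup_{x\in\mathcal{N}}\Gamma(x)$, since each orbit through a point of $\mathcal{N}$ is entirely contained in $\mathcal{N}$, and conversely $\mathcal{N}$ is covered by the orbits through its points. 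For the flow-invariance of $\partial\mathcal{N}$, I would invoke the same tool used before, namely \cite[Lemma 6.4]{teschl2012ordinary}, which states that the boundary of a flow-invariant set is again flow-invariant (this uses that the flow is a homeomorphism in each time slice, so it maps $\overline{\mathcal{N}}$ to $\overline{\mathcal{N}}$ and $\overline{\C\setminus\mathcal{N}}$ to itself, and $\partial\mathcal{N}=\overline{\mathcal{N}}\cap\overline{\C\setminus\mathcal{N}}$).

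Concretely, the proof would read: Let $x\in\mathcal{N}$ and $y\in\Gamma(x)$. Since $\Gamma(y)=\Gamma(x)$, the limit set is unchanged, so $w_{+(-)}(\Gamma(y))=\{a\}$ and $y\in\mathcal{N}$; this shows both that $\mathcal{N}$ is flow-invariant and that $\mathcal{N}=\bigcup_{x\in\mathcal{N}}\Gamma(x)$. The flow-invariance of $\partial\mathcal{N}=\overline{\mathcal{N}}\cap\overline{\C\setminus\mathcal{N}}$ then follows from \cite[Lemma 6.4]{teschl2012ordinary}, exactly as in Proposition \ref{prop:center_invariant}.

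I do not expect any genuine obstacle here: unlike the center case, there is no need to appeal to Theorem \ref{thm:central_tool} to rule out limit cycles inside, because the union-of-orbits identity for $\mathcal{N}$ is tautological once flow-invariance is known — every flow-invariant set equals the union of the orbits it contains. The only point requiring a little care is making sure the definition of $\mathcal{N}$ via $w_{+(-)}$ is genuinely orbit-dependent rather than point-dependent, which is immediate since the $\omega$- and $\alpha$-limit sets depend only on the orbit, not on the chosen base point. So the mild subtlety, if any, is purely notational: confirming that "the positive (negative) limit set of $\Gamma(x)$" is well defined independently of $x\in\Gamma(x)$.
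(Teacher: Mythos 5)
Your proposal is correct and follows essentially the same route as the paper, which simply declares the proof analogous to Proposition \ref{prop:center_invariant}: flow-invariance of $\mathcal{N}$ from the orbit-dependence of the limit sets, flow-invariance of $\partial\mathcal{N}$ via \cite[Lemma 6.4]{teschl2012ordinary}, and the union-of-orbits identity as an immediate consequence. Your observation that no appeal to Theorem \ref{thm:central_tool} is needed here (unlike in the center case, where the identity involves interiors of periodic orbits) is accurate and a fair simplification of the ``analogous'' argument.
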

\begin{proof}
    This proof is analogous to the proof of Proposition \ref{prop:center_invariant}.
\end{proof}
\begin{theorem}\label{thm:nf_open}
    It holds $a\in\mathring{\mathcal{N}}$. $\mathcal{N}$ is open, connected and path-connected.
\end{theorem}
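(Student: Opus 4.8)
\textbf{Proof plan for Theorem \ref{thm:nf_open}.}
The plan is to establish the three assertions in sequence: first that $a$ is an interior point of $\mathcal{N}$, then openness of $\mathcal{N}$, and finally connectedness (which will immediately yield path-connectedness by the same argument used for Theorem \ref{thm:center_simply_connected}). For the first claim, I would invoke the local structure of a node or focus as recalled in the introduction (item (ii), based on \cite[Theorem 3.2]{kainz2024local} and the classification of simple equilibria): there is a neighborhood $U$ of $a$ such that every orbit through $U$ has $w_{+(-)}(\Gamma(x))=\{a\}$. Hence $U\subset\mathcal{N}$ and $a\in\mathring{\mathcal{N}}$.

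For openness, let $x\in\mathcal{N}$, so $w_{+(-)}(\Gamma(x))=\{a\}$. Since $\Gamma(x)$ enters the attracting (repelling) neighborhood $U$ of $a$ in positive (negative) time, there is a time $\tau$ with $\Phi(\tau,x)\in U$. By continuity of the flow, cf. \cite[Chapter 2.4, Theorem 4]{perko2013differential}, there is $\delta>0$ with $\Phi(\tau,y)\in U$ for all $y\in\mathcal{B}_\delta(x)$, and then $w_{+(-)}(\Gamma(y))=\{a\}$ because orbits entering $U$ tend to $a$ in the appropriate time direction. Thus $\mathcal{B}_\delta(x)\subset\mathcal{N}$, so $\mathcal{N}$ is open.

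For connectedness, the point is that every orbit in $\mathcal{N}$ accumulates at $a$, which lies in the open set $\mathcal{N}$. Concretely, for each $x\in\mathcal{N}$ the set $\Gamma(x)\cup\{a\}$ is connected (it is the continuous image of an interval together with a limit point), contains $a$, and is contained in $\mathcal{N}$ by Proposition \ref{prop:nf_invariant} together with $a\in\mathcal{N}$. Since $\mathcal{N}=\bigcup_{x\in\mathcal{N}}(\Gamma(x)\cup\{a\})$ is a union of connected sets all containing the common point $a$, it is connected, cf. \cite[Theorem 23.3]{munkres2000topology}. As $\mathcal{N}$ is open in $\C$, it is locally path-connected, hence path-connected, cf. \cite[Proposition 12.25]{sutherland2009introduction}.

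The only genuinely delicate point is the precise formulation of the local ``basin neighborhood'' $U$ of a node or focus: one must be sure that $U$ can be chosen so that \emph{every} orbit meeting $U$ — not merely those starting in $U$ — tends to $a$ in the relevant time direction, and that this neighborhood is forward (resp. backward) invariant enough for the flow-continuity argument to close. This is exactly the content of the local analysis of simple equilibria in \cite{kainz2024local}, so the obstacle is one of citation precision rather than of new mathematics; everything else is a routine flow-continuity and point-set-topology argument parallel to the center case treated above.
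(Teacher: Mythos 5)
Your proposal is correct and follows essentially the same route as the paper's proof: the local basin neighborhood of the stable (unstable) node/focus from \cite[Definition 3.1]{kainz2024local} gives $a\in\mathring{\mathcal{N}}$, flow continuity pushes a neighborhood of any $x\in\mathcal{N}$ into that basin neighborhood to give openness, and connectedness plus path-connectedness follow from writing $\mathcal{N}$ as a union of the connected sets $\Gamma(x)\cup\{a\}$ sharing the point $a$, citing the same results of Munkres and Sutherland. Your worry about the neighborhood $U$ is unnecessary: the cited local definition already states that every point of a small ball around $a$ has $\{a\}$ as its positive (negative) limit set, which is all the argument needs.
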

\begin{proof}
    Assume w.l.o.g. that $a$ is stable. By \cite[Definition 3.1]{kainz2024local}, there exists $\varepsilon_1>0$ such that for all $y\in\mathcal{B}_{\varepsilon_1}(a)$ we have $w_+(\Gamma(y))=\{a\}$. This proves $\mathcal{B}_{\varepsilon_1}(a)\subset\mathcal{N}$ and $a\in\mathring{\mathcal{N}}$. Let $x\in\mathcal{N}\setminus\{a\}$ be arbitrary. Then there exists a $\tau>0$ with $\xi:=\Phi(\tau,x)\in\mathcal{B}_{\varepsilon_1}(a)$. Choose $\varepsilon_2:=\frac{1}{2}\min\{|\xi-a|,\varepsilon_1-|\xi-a|\}>0$ such that $\mathcal{B}_{\varepsilon_2}(\xi)\subset\mathcal{B}_{\varepsilon_1}(a)$. By continuity of the flow, cf. \cite[Chapter 2.4, Theorem 4]{perko2013differential}, there exists a $\delta>0$ such that $|\Phi(\tau,z)-\xi|<\varepsilon_2$ for all $z\in\mathcal{B}_\delta(x)$. Thus $w_+(\Gamma(z))=\{a\}$ for all $z\in\mathcal{B}_\delta(x)$ and $\mathcal{N}$ is open.
    
    For $x\in\mathcal{N}$ set $J_x:=\Gamma(x)\cup\{a\}$ and define the cover $C:=\{J_x:x\in\mathcal{N}\}$ of $\mathcal{N}$. Since $a$ lies on the boundary of each orbit in $\mathcal{N}$, every set in $C$ is connected, cf. \cite[Theorem 23.4]{munkres2000topology}. Moreover, for $J,\tilde{J}\in C$ it always holds $J\cap\tilde{J}\supset\{a\}\not=\emptyset$. Thus, by \cite[Theorem 23.3]{munkres2000topology}, $\mathcal{N}$ is connected. From \cite[Proposition 12.25]{sutherland2009introduction} it follows that $\mathcal{N}$ is even path-connected.
\end{proof}
%\begin{lemma}\label{lem:nf_no_centers_on_boundary}
%    All centers $\tilde{a}\in F^{-1}(\{0\})\setminus\{a\}$ with its corresponding basin $\mathcal{V}$ satisfy $\mathcal{V}\cap\partial\mathcal{N}=\emptyset$. In particular, there are no centers on $\partial\mathcal{N}$.
%\end{lemma}
%\begin{proof}
%    Assume w.l.o.g. that $a$ is stable. Suppose there exists a center $\tilde{a}\in\C{}\setminus\{a\}$ such that its basin $\mathcal{V}$ satisfies $\mathcal{V}\cap\partial\mathcal{N}\not=\emptyset$. Since $\mathcal{V}$ is open, cf. Theorem \ref{thm:center_open}, we can choose $z\in\mathcal{V}\cap\partial\mathcal{N}$ and $r>0$ sufficiently small such that $\mathcal{B}_r(z)\subset\mathcal{V}$. Since $z\in\partial\mathcal{N}$, there exists $y\in\mathcal{N}\cap\mathcal{B}_r(z)\not=\emptyset$ satisfying $w_+(\Gamma(y))=\{a\}$. But this is impossible for a periodic orbit in $\mathcal{V}$. Thus $\mathcal{V}\cap\partial\mathcal{N}=\emptyset$. In particular, there are no centers on $\partial\mathcal{N}$, i.e. all equilibria on the boundary of $a$ have to be nodes, foci or zeros of order $m\ge2$.
%\end{proof}
In the proof of \cite[Theorem 4.3]{broughan2003structure} the author does not investigate the case of an isolated equilibrium on the boundary of a node or focus. This is a significant gap, which is closed by the following result.
\begin{proposition}\label{prop:nf_no_isolated_equilibria}
    It holds
    \begin{align*}
        \forall\,\tilde{a}\in\partial\mathcal{N}\cap F^{-1}(\{0\}):\forall\,\rho>0: \left(\mathcal{B}_\rho(\tilde{a})\cap\partial\mathcal{N}\right)\setminus\{\tilde{a}\}\not=\emptyset,
    \end{align*}
    i.e. there are no isolated points with respect to the subspace topology on $\partial\mathcal{N}$. Moreover, for all $\tilde{a}\in\partial\mathcal{N}\cap F^{-1}(\{0\})$ there exists an unbounded orbit $\Gamma\subset\partial\mathcal{N}$ with $\tilde{a}\in w_+(\Gamma)\cup w_-(\Gamma)$, i.e. all equilibria on the boundary of $a$ are attached to an orbit on the boundary of $a$.
\end{proposition}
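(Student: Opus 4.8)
The plan is to reduce the statement to producing a single unbounded orbit in $\partial\mathcal{N}$ with $\tilde a$ in one of its limit sets, and then to obtain that orbit as a ``separatrix'' of $\mathcal{N}$ attached to $\tilde a$.

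\emph{Reductions.} Reversing time if necessary, assume $\mathcal{N}$ is the stable basin, $\mathcal{N}=\{x:w_+(\Gamma(x))=\{a\}\}$. Since $a\in\mathring{\mathcal{N}}$ by Theorem \ref{thm:nf_open} and $\mathcal{N}$ is open, every $\tilde a\in\partial\mathcal{N}$ satisfies $\tilde a\ne a$ and $\tilde a\notin\mathcal{N}$. The second assertion implies the first: if $\Gamma\subset\partial\mathcal{N}$ is unbounded with $\tilde a\in w_+(\Gamma)\cup w_-(\Gamma)$, then $\Gamma\ne\{\tilde a\}$, so points of $\Gamma\setminus\{\tilde a\}\subset\partial\mathcal{N}\setminus\{\tilde a\}$ lie in every $\mathcal{B}_\rho(\tilde a)$; and a boundary point that is not an equilibrium is never isolated in $\partial\mathcal{N}$, because $\partial\mathcal{N}$ is flow-invariant (Proposition \ref{prop:nf_invariant}) and contains the locally arc-like orbit through such a point. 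Hence it suffices to produce the unbounded orbit.

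\emph{Local type of $\tilde a$.} I would then exclude two cases. If $\tilde a$ were a center, then since $\mathcal{N}$ is flow-invariant and carries no closed orbit (orbits in $\mathcal{N}$ have limit set $\{a\}$, hence are not periodic), Lemma \ref{lem:no_center_on_boundary_of_invariant_sets} would give $\mathcal{V}\cap\partial\mathcal{N}=\emptyset$ for the period annulus $\mathcal{V}$ at $\tilde a$, contradicting $\tilde a\in\mathcal{V}\cap\partial\mathcal{N}$. If $\tilde a$ were a stable node or focus, a whole ball $\mathcal{B}_\varepsilon(\tilde a)$ would lie in the stable basin of $\tilde a$, so every orbit meeting it would have limit set $\{\tilde a\}\ne\{a\}$ and the ball would miss $\mathcal{N}$, again contradicting $\tilde a\in\partial\mathcal{N}$. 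So, by \cite[Theorem 4.4]{kainz2024local}, $\tilde a$ is either an unstable node/focus or an equilibrium of order $m\ge2$ carrying a finite elliptic decomposition (FED).

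\emph{A separating boundary orbit.} Fix $\varepsilon>0$ small enough that the local phase portrait at $\tilde a$ is controlled by \cite{kainz2024local} (outward transversality of a small circle in the node/focus case; the sector description of \cite[Proposition 4.3]{kainz2024local} in the FED case). First, $\mathcal{B}_\varepsilon(\tilde a)\setminus\{\tilde a\}\not\subset\mathcal{N}$: in the FED case this is immediate, since arbitrarily small homoclinic orbits to $\tilde a$ exist and have limit set $\{\tilde a\}\ne\{a\}$; in the node/focus case, if every orbit through a small circle $\gamma$ around $\tilde a$ tended to $a$ in forward time, then by uniform convergence on the compact set $\gamma$ we would get $\Phi(T,\gamma)\subset\mathcal{B}_\delta(a)$ for large $T$ and arbitrary $\delta>0$, while $\tilde a=\Phi(T,\tilde a)$ lies in the bounded interior of the Jordan curve $\Phi(T,\gamma)$, whence $\tilde a\in\mathcal{B}_\delta(a)$, impossible for $\delta<|\tilde a-a|$. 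On the other hand $\mathcal{B}_\varepsilon(\tilde a)\cap\mathcal{N}\ne\emptyset$ since $\tilde a\in\partial\mathcal{N}$. Parametrising the local orbits appropriately (by $\gamma$ in the node/focus case, sector by sector in the FED case) and letting $U$ be the parameters whose orbit meets $\mathcal{N}$, the set $U$ is open (openness of $\mathcal{N}$), nonempty and proper, so it has a boundary point $p_0$ in the parameter circle. For the orbit $\Gamma_0$ through $p_0$ one checks in the standard way that $\Gamma_0\cap\mathcal{N}=\emptyset$ (openness of $\mathcal{N}$), that $\Gamma_0\subset\overline{\mathcal{N}}$ (it is a locally uniform limit of orbits $\Gamma_p\subset\mathcal{N}$, using continuity of the flow and flow-invariance of $\mathcal{N}$), hence $\Gamma_0\subset\partial\mathcal{N}$, and that $\tilde a\in w_-(\Gamma_0)$.

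\emph{Unboundedness of $\Gamma_0$ --- the crux.} Suppose $\Gamma_0$ is bounded. Then $\overline{\Gamma_0}$ is compact and contained in the closed set $\partial\mathcal{N}$, so Theorem \ref{thm:central_tool} applies: $\Gamma_0$ cannot be periodic (its $\alpha$-limit contains the equilibrium $\tilde a$), so $w_+(\Gamma_0)=\{b\}$ for an equilibrium $b\in\partial\mathcal{N}$, and $b\ne a$ (otherwise $p_0\in\mathcal{N}$). Orbits $\Gamma_p\subset\mathcal{N}$ with $p\to p_0$ shadow $\Gamma_0$ forward, hence come arbitrarily close to $b$ while still converging to $a$; this rules out $b$ being a stable node/focus (its basin would trap the $\Gamma_p$), an unstable node/focus (nothing converges to it forward), or a center (no orbit converges to a center forward, by flow-invariance of the period annulus, cf. Theorem \ref{thm:center_open}), so $b$ also carries a FED. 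If $b=\tilde a$, then $\Gamma_0$ is homoclinic to $\tilde a$, hence a sector-forming orbit by Corollary \ref{cor:globalSector_final_corrollaries_1}; the associated global elliptic sector $\mathcal{S}(\Gamma_0)$ is open (Theorem \ref{thm:globalSector_open}) and consists entirely of orbits homoclinic to $\tilde a$, so a full neighbourhood of $\Gamma_0$ misses $\mathcal{N}$ --- contradicting $\Gamma_0\subset\partial\mathcal{N}$. This leaves the case $b\ne\tilde a$, in which $\Gamma_0$ is a bounded heteroclinic orbit joining two FED-equilibria inside $\partial\mathcal{N}$; I expect this to be the main obstacle. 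I would close it by repeating the separating-orbit construction at $b$ to obtain a further bounded boundary orbit, and then showing the resulting chain of heteroclinic orbits in $\partial\mathcal{N}$ cannot continue indefinitely --- exploiting the holomorphy of $F$ (isolatedness of zeros, hence finitely many equilibria in any compact set, together with Theorem \ref{thm:central_tool} excluding the recurrent behaviour that would be needed to close such a chain), which is the same circle of ideas later yielding Corollary \ref{cor:hetreg_no_cycle_with_heteroclinic_regions}. Once a bounded $\Gamma_0$ is excluded, $\Gamma_0$ is the required unbounded orbit and the Proposition follows.
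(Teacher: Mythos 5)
Your route is genuinely different from the paper's (the paper proves non-isolation by contradiction: the punctured ball would lie in $\mathcal{N}$, forcing $\tilde a$ to be a node/focus whose basin sits inside $\mathcal{N}$; two circles without contact and a transit-time map then yield a homeomorphism of $\mathcal{N}\setminus\{a\}$ with the cylinder $\R{}\times S^1$, and the fundamental-group obstruction — two punctures versus $\pi_1\cong\mathbb{Z}$ — gives the contradiction; the attached boundary orbit is only extracted afterwards from the local structure). But your proposal has a genuine gap, and it is exactly the step you flag yourself: excluding that the separating orbit $\Gamma_0\subset\partial\mathcal{N}$ is a \emph{bounded heteroclinic} orbit joining $\tilde a$ to a second multiple equilibrium $b$. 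The ``chain of heteroclinic orbits cannot continue indefinitely'' idea is not an argument: nothing in your sketch controls the chain (the subsequent separating orbits need not be bounded, nothing forces the chain to close up or to stay in a fixed compact set, and Corollary \ref{cor:hetreg_no_cycle_with_heteroclinic_regions} is proved later and via different means), so as written the proof of unboundedness — and hence of the whole proposition — is incomplete.

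The irony is that this case is closable by the very mechanism you already used to rule out $b$ being a stable node/focus, and it is how the paper handles it in the proof of Theorem \ref{thm:nf_unbounded}: if $b$ carries a FED and $w_+(\Gamma_0)=\{b\}$, then by \cite[Proposition 4.3]{kainz2024local} there is an \emph{open} set $A$ near $b$ (a sector neighborhood of a definite direction) all of whose orbits tend to $b$ as $t\to\infty$, and $\Gamma_0$ eventually enters $A$. Since $\Gamma_0\subset\partial\mathcal{N}$ and $A$ is open, $A\cap\mathcal{N}\not=\emptyset$; but every orbit of $\mathcal{N}$ tends to $a\not=b$ in forward time, a contradiction. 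Inserting this observation closes your crux, after which your argument goes through in outline; note, though, that your ``sector by sector'' parametrisation of local orbits at a FED equilibrium is only sketched (a small circle around $\tilde a$ is not crossed exactly once by homoclinic orbits, so the open/closed structure of $U$ needs an honest construction — e.g.\ join a point of $\mathcal{B}_r(\tilde a)\cap\mathcal{N}$ to a point of a small homoclinic orbit by an arc in $\mathcal{B}_r(\tilde a)\setminus\{\tilde a\}$ and take a boundary point of $\mathcal{N}$ on it, with $r$ chosen so that every orbit through $\mathcal{B}_r(\tilde a)$ has $\tilde a$ in a limit set). As a side remark, your Jordan-curve/uniform-convergence argument showing that a punctured neighbourhood of $\tilde a$ cannot lie in $\mathcal{N}$ is an attractive, more elementary substitute for the paper's cylinder and fundamental-group computation in the node/focus case.
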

\begin{proof}
    This proof, carried out by contradiction, relies on constructing a homeomorphism between two appropriately chosen circles without contact, each encircling a focus/node. Since this is highly technical, a very detailed step-by-step proof can be found in the appendix.
\end{proof}
\begin{theorem}\label{thm:nf_unbounded}
    $\partial\mathcal{N}$ is either empty or consists only of equilibria and unbounded orbits. $\mathcal{N}$ is unbounded.
\end{theorem}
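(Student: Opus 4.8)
The plan is to follow the pattern established for the period annulus in Theorem \ref{thm:center_open}, replacing the classification of \cite{andronov1973qualitativetheory} by the Poincaré--Bendixson statement Theorem \ref{thm:central_tool} and using Proposition \ref{prop:nf_no_isolated_equilibria} to deal with equilibria on $\partial\mathcal N$. First I would reduce to the case that $a$ is attracting: replacing $F$ by $-F$ leaves $\mathcal N$, $\partial\mathcal N$, the set of orbits and the equilibrium set unchanged, interchanges attracting and repelling nodes/foci, and keeps the vector field entire; so from now on $\mathcal N=\{x\in\C:w_+(\Gamma(x))=\{a\}\}$.

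The heart of the argument is to show that $\partial\mathcal N$ contains no bounded non-stationary orbit. Let $x\in\partial\mathcal N$ with $F(x)\neq0$. By flow-invariance of $\partial\mathcal N$ (Proposition \ref{prop:nf_invariant}), $\Gamma(x)\subseteq\partial\mathcal N$, and, $\partial\mathcal N$ being closed, boundedness of $\Gamma(x)$ would make $K:=\overline{\Gamma(x)}$ a compact subset of $\partial\mathcal N$. Theorem \ref{thm:central_tool} then leaves two possibilities. In the first, $\Gamma(x)$ is a periodic orbit with a center $\tilde a$ in its interior; then $x$ lies in the center basin of $\tilde a$, which is open by Theorem \ref{thm:center_open} and disjoint from $\mathcal N$ (its points are either $\tilde a\neq a$ or sit on nontrivial periodic orbits, whose $\omega$-limit set is the orbit itself and not $\{a\}$), so $x$ has an open neighbourhood missing $\mathcal N$, contradicting $x\in\partial\mathcal N\subseteq\overline{\mathcal N}$ -- this is exactly Lemma \ref{lem:no_center_on_boundary_of_invariant_sets} applied with $A=\mathcal N$. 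In the second, $w_+(\Gamma(x))=\{b\}$ for some equilibrium $b$; then $b\in K\subseteq\partial\mathcal N$, while $a\in\mathring{\mathcal N}$ by Theorem \ref{thm:nf_open}, so $b\neq a$. Recalling that every equilibrium is a center, a node, a focus or a multiple equilibrium, and noting that an orbit tending to $b$ in forward time excludes $b$ being a center or a repelling node/focus, $b$ must be an attracting node/focus or a multiple equilibrium. In either case there is an open set $O$ such that $\Phi(t_0,x)\in O$ for all large $t_0$ and every orbit meeting $O$ tends to $b$ in forward time: if $b$ is an attracting node/focus, $O$ is its basin of attraction (Theorem \ref{thm:nf_open}, so any $t_0$ works); if $b$ is a multiple equilibrium, $\Gamma(x)$ approaches $b$ along one of the definite directions of its finite elliptic decomposition and $O$ is the corresponding sector from \cite[Proposition 4.3 (iii)]{kainz2024local}. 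Fixing such a $t_0$, continuity of the flow gives $\delta>0$ with $\Phi(t_0,\mathcal B_\delta(x))\subseteq O$; choosing $y\in\mathcal B_\delta(x)\cap\mathcal N$ (possible since $x\in\partial\mathcal N$) and using flow-invariance of $\mathcal N$ forces both $w_+(\Gamma(y))=\{b\}$ and $w_+(\Gamma(y))=\{a\}$, hence $b=a$ -- a contradiction. Therefore $\Gamma(x)$ is unbounded, which proves the first assertion.

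For the unboundedness of $\mathcal N$: if $\partial\mathcal N=\emptyset$, then $\mathcal N$ is clopen in the connected space $\C$ and contains $a$, so $\mathcal N=\C$; otherwise pick $p\in\partial\mathcal N$, and either $p$ is an equilibrium -- then Proposition \ref{prop:nf_no_isolated_equilibria} supplies an unbounded orbit inside $\partial\mathcal N$ -- or $p$ is not, in which case $\Gamma(p)\subseteq\partial\mathcal N$ is itself unbounded by the previous paragraph. In both cases $\partial\mathcal N\subseteq\overline{\mathcal N}$ contains points of arbitrarily large modulus, so $\mathcal N$ is unbounded.

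I expect the delicate point to be the second case of the compactness step -- constructing the open forward-stable set $O$ around $b$ and justifying that $\Gamma(x)$ eventually enters it -- since this is exactly where the local structure of a multiple equilibrium from \cite{kainz2024local} (the finite elliptic decomposition and the convergence along definite directions) enters, and where holomorphy is genuinely essential: the absence of saddle equilibria is what rules out $\Gamma(x)$ being a bounded saddle separatrix on the boundary, the scenario that can occur for general planar fields. The remaining bookkeeping parallels the period-annulus case closely.
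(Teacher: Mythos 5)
Your proof is correct, and its skeleton is the paper's: compactness of $\overline{\Gamma(x)}$ plus Theorem \ref{thm:central_tool}, exclusion of the periodic alternative via the openness of the period annulus (Lemma \ref{lem:no_center_on_boundary_of_invariant_sets}), and Proposition \ref{prop:nf_no_isolated_equilibria} to produce an unbounded boundary orbit when the chosen boundary point is an equilibrium; the reduction via $F\mapsto-F$ matches the paper's ``w.l.o.g.\ stable'' step. The genuine difference lies in how the remaining alternative is killed. The paper distinguishes whether the bounded boundary orbit is homoclinic or heteroclinic, and in the homoclinic subcase it invokes the global elliptic sector machinery of Section 3 (Corollary \ref{cor:globalSector_final_corrollaries_1} together with the openness Theorem \ref{thm:globalSector_open}); in the heteroclinic subcase it rules out a node/focus limit by a basin-intersection argument and then uses the local sector of \cite[Proposition 4.3]{kainz2024local}. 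You instead ignore the backward limit altogether and argue uniformly from $w_+(\Gamma(x))=\{b\}$: either $b$ is an attracting node/focus, whose basin is open by Theorem \ref{thm:nf_open} and hence meets $\mathcal{N}$, or $b$ is a multiple equilibrium and the open definite-direction sector of \cite[Proposition 4.3 (iii)]{kainz2024local} serves as the forward-stable set; in both cases nearby orbits of $\mathcal{N}$ are forced to tend to $b\neq a$, a contradiction. This buys a more economical proof -- the theorem becomes independent of the global-sector results, needing only the local FED geometry -- while the paper's version of the homoclinic case illustrates how the global sector theory enters; in the heteroclinic case the two arguments essentially coincide. Your one-line dismissal of a repelling node/focus as a forward limit is at the same level of detail as the paper's corresponding assertion and rests on the standard local monotonicity of $|x-b|$ near a simple repelling equilibrium, so no gap there.
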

\begin{proof}
    If $\partial\mathcal{N}=\emptyset$, then $\mathcal{N}$ is open, closed and nonempty, i.e. $\mathcal{N}=\C{}$ is clearly unbounded. Thus, assume $\partial\mathcal{N}\not=\emptyset$. Additionally, assume w.l.o.g. that $a$ is stable.
    
    Suppose there exists a bounded orbit $\Gamma\subset\partial\mathcal{N}\setminus F^{-1}(\{0\})$. As in the proof of Theorem \ref{thm:center_open}, $K:=\overline{\Gamma}$ is compact and $\Gamma\cup w_\pm(\Gamma)\subset K\subset\partial\mathcal{N}$. By Lemma \ref{lem:no_center_on_boundary_of_invariant_sets}, $\Gamma$ cannot be periodic. Hence, by Theorem \ref{thm:central_tool}, there exist two equilibria $a_+,a_-\in\partial\mathcal{N}\cap F^{-1}(\{0\})$ such that $w_\pm(\Gamma)=\{a_\pm\}$, i.e. $\Gamma$ is either heteroclinic or homoclinic. In both cases we want to derive a contradiction.
    
    Suppose $\Gamma$ is homoclinic, i.e. $a_+=a_-$. By Corollary \ref{cor:globalSector_final_corrollaries_1}, $\Gamma$ is a sector-forming orbit lying in an global elliptic sector $\mathcal{S}_0$ in $a_+$. Choose $z\in\Gamma\subset\mathcal{S}_0\cap\partial\mathcal{N}$. By openness of $\mathcal{S}_0$, cf. Theorem \ref{thm:globalSector_open}, we get $\mathcal{S}_0\cap\mathcal{N}\not=\emptyset$, contradicting the fact that all orbits in $\mathcal{N}$ tend to $a\not\in\{a_+,a_-\}$.
    
    Suppose, $\Gamma$ is heteroclinic, i.e. $a_+\not=a_-$. Since $a\not\in\{a_+,a_-\}$, $a_+$ cannot be a node or focus, since its basin would have nonempty intersection with $\mathcal{N}$.\footnote{If we investigated the case that $a$ is unstable, we would choose $a_-$ instead of $a_+$ at this point.} Additionally, by Lemma \ref{lem:no_center_on_boundary_of_invariant_sets}, $a_+\in\partial\mathcal{N}$ cannot be a center. It follows that $a_+$ must have order $m\ge 2$. By \cite[Proposition 4.3]{kainz2024local}, there exist $\theta\in\mathcal{E}(F,m)$ and $r,\delta>0$ such that $\Gamma$ as well as the orbit $\Gamma(x_0)$ through $x_0\in\{x\in\R{2}:|x-a_+|<r,|\arg(x-a_+)-\theta|<\delta\}=:A$ tend to $a_+$ for $t\to\infty$. We have $\Gamma\cap A\not=\emptyset$. %Choose $\hat{z}\in\Gamma\cap A\subset\partial\mathcal{N}\cap A$. Since $A$ is open, we can choose $\hat{\varepsilon}>0$ such that $\mathcal{B}_{\hat{\varepsilon}}(\hat{z})\subset A$.
    Since $A$ is open, we can choose $x_0\in A\cap\mathcal{N}\not=\emptyset$, i.e. $\Gamma(x_0)\subset\mathcal{N}$ tends to $a_+\not=a$ for $t\to\infty$. This is a contradiction.
    
    We conclude that $\Gamma$ cannot be bounded. All in all, it follows that $\partial\mathcal{N}$ consists only of equilibria (but no centers) and unbounded orbits. Since $\partial\mathcal{N}\not=\emptyset$, we can apply Proposition \ref{prop:nf_no_isolated_equilibria} to conclude that there exists at least one unbounded orbit lying on the boundary of $a$. Thus, also $\mathcal{N}$ is unbounded.
\end{proof}
With the results we have obtained so far, we can now prove that the basin is even simply connected. The crucial factor here is Proposition \ref{prop:nf_no_isolated_equilibria}, which excludes the existence of holes in the form of isolated equilibria on the boundary of $a$.
\begin{theorem}\label{thm:nf_simply_connected}
    $\mathcal{N}$ is simply connected.
\end{theorem}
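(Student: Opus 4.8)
The plan is to realize $\mathcal{N}$ as a union of simply connected open sets glued along a common connected intersection, and then apply the infinite-cover Seifert--van Kampen theorem exactly as in the proofs of Theorem \ref{thm:center_simply_connected} and Theorem \ref{thm:globalSector_simply_connected}. The natural building blocks are, for each $x\in\mathcal{N}$, the set $J_x := \Gamma(x)\cup\{a\}$ used already in the proof of Theorem \ref{thm:nf_open}; but these are not open, so I would instead thicken them. Concretely, assume w.l.o.g. that $a$ is stable, fix the ball $\mathcal{B}_{\varepsilon_1}(a)\subset\mathcal{N}$ from the proof of Theorem \ref{thm:nf_open}, and for each $x\in\mathcal{N}$ let $U_x := \mathcal{B}_{\varepsilon_1}(a)\cup\bigl(\bigcup_{t\le 0}\mathcal{B}_{\delta(t)}(\Phi(t,x))\bigr)$ for a suitable radius function $\delta(t)>0$ chosen small enough (via continuity of the flow, \cite[Chapter 2.4, Theorem 4]{perko2013differential}) that each such ball still lies in $\mathcal{N}$. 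Then each $U_x$ is open, contains $\mathcal{B}_{\varepsilon_1}(a)$, and deformation retracts onto $J_x\cap\overline{\mathcal{B}_{\varepsilon_1}(a)}$-type set along the flow, hence is contractible; the $U_x$ cover $\mathcal{N}$ since every point flows into $\mathcal{B}_{\varepsilon_1}(a)$; and any intersection $U_x\cap U_y$ contains the connected open set $\mathcal{B}_{\varepsilon_1}(a)$. Seifert--van Kampen for infinite open covers, \cite[Theorem 1.20]{hatcher2005algebraic}, then gives $\pi_1(\mathcal{N})=0$.

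Alternatively — and this is the route I would actually prefer to write, since it sidesteps constructing $\delta(t)$ carefully — I would argue via the complement in the sphere. Identify $\C$ with $\R^2\subset S^2$ and let $\widehat{\mathcal{N}} := \mathcal{N}\cup\{\infty\}$ if appropriate, or simply study $\overline{\C}\setminus\mathcal{N}$. An open connected subset of $\R^2$ is simply connected iff its complement in $S^2=\R^2\cup\{\infty\}$ is connected. So the goal becomes: $S^2\setminus\mathcal{N}=\bigl(\C\setminus\mathcal{N}\bigr)\cup\{\infty\}$ is connected. By Theorem \ref{thm:nf_unbounded}, $\mathcal{N}$ is unbounded and $\C\setminus\mathcal{N}\supseteq\partial\mathcal{N}$, with $\partial\mathcal{N}$ consisting only of equilibria and unbounded orbits. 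The key point is that every connected component of $\C\setminus\mathcal{N}$ is unbounded and hence accumulates at $\infty$: indeed $\C\setminus\mathcal{N}$ is flow-invariant (complement of a flow-invariant set), closed, and any bounded component would have compact closure, forcing (by Theorem \ref{thm:central_tool}) a center or a homoclinic/heteroclinic configuration inside it, each of which is excluded on $\partial\mathcal{N}$ by the arguments already used in the proof of Theorem \ref{thm:nf_unbounded} together with Proposition \ref{prop:nf_no_isolated_equilibria} (which prevents isolated equilibria). Since every component of $\C\setminus\mathcal{N}$ is unbounded, adjoining $\infty$ connects all of them, so $S^2\setminus\mathcal{N}$ is connected, proving simple connectedness.

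The main obstacle is establishing that $\C\setminus\mathcal{N}$ has no bounded component — equivalently, that $\mathcal{N}$ has no "holes." The boundary of such a hole would lie in $\partial\mathcal{N}$, hence consist of equilibria and orbits; a bounded invariant set with compact closure triggers Theorem \ref{thm:central_tool}, yielding either a periodic orbit (excluded on $\partial\mathcal{N}$ by Lemma \ref{lem:no_center_on_boundary_of_invariant_sets}) or orbits limiting onto equilibria, i.e. homoclinic or heteroclinic orbits on $\partial\mathcal{N}$ — but Theorem \ref{thm:nf_unbounded} already shows no bounded orbit lies on $\partial\mathcal{N}$, and Proposition \ref{prop:nf_no_isolated_equilibria} rules out a hole shrinking to an isolated equilibrium. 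The delicate part is making the topological implication "bounded complementary component $\Rightarrow$ its topological boundary is an invariant set with compact closure to which Theorem \ref{thm:central_tool} applies" fully rigorous; I expect this to need a careful selection of a boundary point whose whole orbit stays in $\partial\mathcal{N}$ (using flow-invariance of $\partial\mathcal{N}$) and then an appeal to compactness of the closure of that bounded orbit, exactly mirroring the opening move in the proof of Theorem \ref{thm:nf_unbounded}.
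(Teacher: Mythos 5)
Your preferred (second) route is correct, and at its core it is the paper's own argument in a different topological wrapper. The paper argues directly with loops: given a loop $J\subset\mathcal{N}$ with $\Int(J)\setminus\mathcal{N}\neq\emptyset$, it picks $x\in\partial\mathcal{N}\cap\Int(J)$ and uses Theorem \ref{thm:nf_unbounded} (if $x$ is not an equilibrium) or Proposition \ref{prop:nf_no_isolated_equilibria} (if it is) to produce an unbounded orbit of $\partial\mathcal{N}$ meeting the bounded set $\Int(J)$; that orbit must cross $J\subset\mathcal{N}$, contradicting openness of $\mathcal{N}$. You replace the loop by a bounded component $K$ of $\C\setminus\mathcal{N}$ and the final contradiction by ``unbounded orbit trapped in the bounded set $K$'', together with the standard criterion that an open connected planar set is simply connected iff its complement in $S^2$ is connected (a fact the paper does not invoke). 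The delicate step you flag does close exactly as you predict, and more easily than you fear: if $x\in K$ and $x\notin\partial\mathcal{N}$, a ball around $x$ lies in $\C\setminus\mathcal{N}$ and hence in $K$, so $\partial K\subset\partial\mathcal{N}$; choosing $x\in\partial K$, its orbit lies in $\partial\mathcal{N}$ (flow-invariance, Proposition \ref{prop:nf_invariant}) and, being connected, in $K$, hence is bounded, so by Theorem \ref{thm:nf_unbounded} $x$ is an equilibrium; the attached unbounded orbit $\Gamma\subset\partial\mathcal{N}$ from Proposition \ref{prop:nf_no_isolated_equilibria} satisfies that $\Gamma\cup\{x\}$ is connected and contained in $\C\setminus\mathcal{N}$, hence lies in $K$ --- contradiction. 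In particular you do not even need Theorem \ref{thm:central_tool} or a discussion of centers/homoclinic configurations here; Theorem \ref{thm:nf_unbounded} already carries that load. So the essential inputs coincide with the paper's; the difference is only loop-versus-complementary-component bookkeeping.

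Your first route (Seifert--van Kampen with thickened orbits) is the one with a genuine gap, and I would drop it. Unlike Theorems \ref{thm:center_simply_connected} and \ref{thm:globalSector_simply_connected}, where the cover consists of nested open interiors of Jordan curves, your sets $U_x$ are tubes around orbits, and \cite[Theorem 1.20]{hatcher2005algebraic} needs pairwise and triple intersections to be path-connected; knowing only that $U_x\cap U_y$ contains $\mathcal{B}_{\varepsilon_1}(a)$ does not give this, since two tubes may also intersect far from $a$ in pieces not joined to that ball within the intersection. Likewise, contractibility of each $U_x$ (e.g.\ for an orbit spiralling into a focus, where distinct time segments of the orbit come close to each other) is not automatic for a naive choice of $\delta(t)$. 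These are repairable but costly, which is presumably why the paper does not take this path.
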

%\begin{proof}
%    Let $L\subset\mathcal{N}$ be a loop. Since $L$ is compact and $\mathcal{N}$ is open, cf. Theorem \ref{thm:nf_open}, there exist points $y_1,\ldots,y_\ell\in L$ and radii $r_1,\ldots,r_\ell>0$, $\ell\in\N{}$, such that
%    \begin{align*}
%		L\subset\bigcup_{k=1}^\ell\mathcal{B}_{r_k}(y_k)=:O
%    \end{align*}
%    and $\overline{O}\subset\mathcal{N}$. The latter property can be ensured by reducing the radii. In addition, there exists a closed Jordan curve $J\subset\partial O\subset\mathcal{N}$ such that $L\subset\Int(J)$. If $\Int(J)\setminus\mathcal{N}\not=\emptyset$, we can choose $x\in\partial\mathcal{N}\cap\Int(J)
%    \not=\emptyset$.\footnote{A detailed topological argumentation for this can be found in the proof of \cite[Theorem 5.10]{masterthesiskainz}.} If $x$ is not an equilibrium, $\Gamma(x)$ must be unbounded, cf. Theorem \ref{thm:nf_unbounded}. If $x$ is an equilibrium, there exists an orbit $\Gamma\subset\partial\mathcal{N}$ with $x$ in its limit set, cf. Proposition \ref{prop:nf_no_isolated_equilibria}. Thus, by openness of $\Int(J)$, we have $\Gamma\cap\Int(J)\not=\emptyset$. In both cases we get an unbounded orbit with nonempty intersection with $\Int(J)$. But $\Int(J)$ is bounded, i.e. this is a contradiction. Thus, we have $\Int(J)\subset\mathcal{N}$. Since $\Int(J)$ is simply connected, $L$ can be continuously deformed to a point. Since $L$ is arbitrary and $\mathcal{N}$ is already path-connected, it follows that $\mathcal{N}$ is even simply connected.
%\end{proof}
\begin{proof}
	Let $J\subset\mathcal{N}$ be a loop and suppose that $\Int(J)\setminus\mathcal{N}\not=\emptyset$. Then we can choose $x\in\partial\mathcal{N}\cap\Int(J)
	\not=\emptyset$. If $x$ is not an equilibrium, $\Gamma(x)$ must be unbounded, cf. Theorem \ref{thm:nf_unbounded}. If $x$ is an equilibrium, there exists an orbit $\Gamma\subset\partial\mathcal{N}$ with $x$ in its limit sets, cf. Proposition \ref{prop:nf_no_isolated_equilibria}. In both cases we get an unbounded orbit on the boundary of $a$ having nonempty intersection with $\Int(J)$. Since $\Int(J)$ is bounded, this orbit must intersect $J$. This is a contradiction to the openness of $\mathcal{N}$. Thus, we conclude $\Int(J)\subset\mathcal{N}$ and $J$ can be continuously deformed to a point in $\mathcal{N}$. Since $J$ is arbitrary and $\mathcal{N}$ is already path-connected, it follows that $\mathcal{N}$ is even simply connected.
\end{proof}

To put the basin $\mathcal{N}$ into the framework of the theory of canonical regions and global transversals, we consider the following example of a polynomial vector field.

\begin{example}\label{ex:three_nodes}
     We consider the entire vector field $F:\C{}\to\C{}$ with
    \begin{equation*}
        F(x):=x^3-x=x(x-1)(x+1)\text{.}
    \end{equation*}

    \begin{figure}[ht]
       \centering
        \includegraphics[clip,scale=0.6]{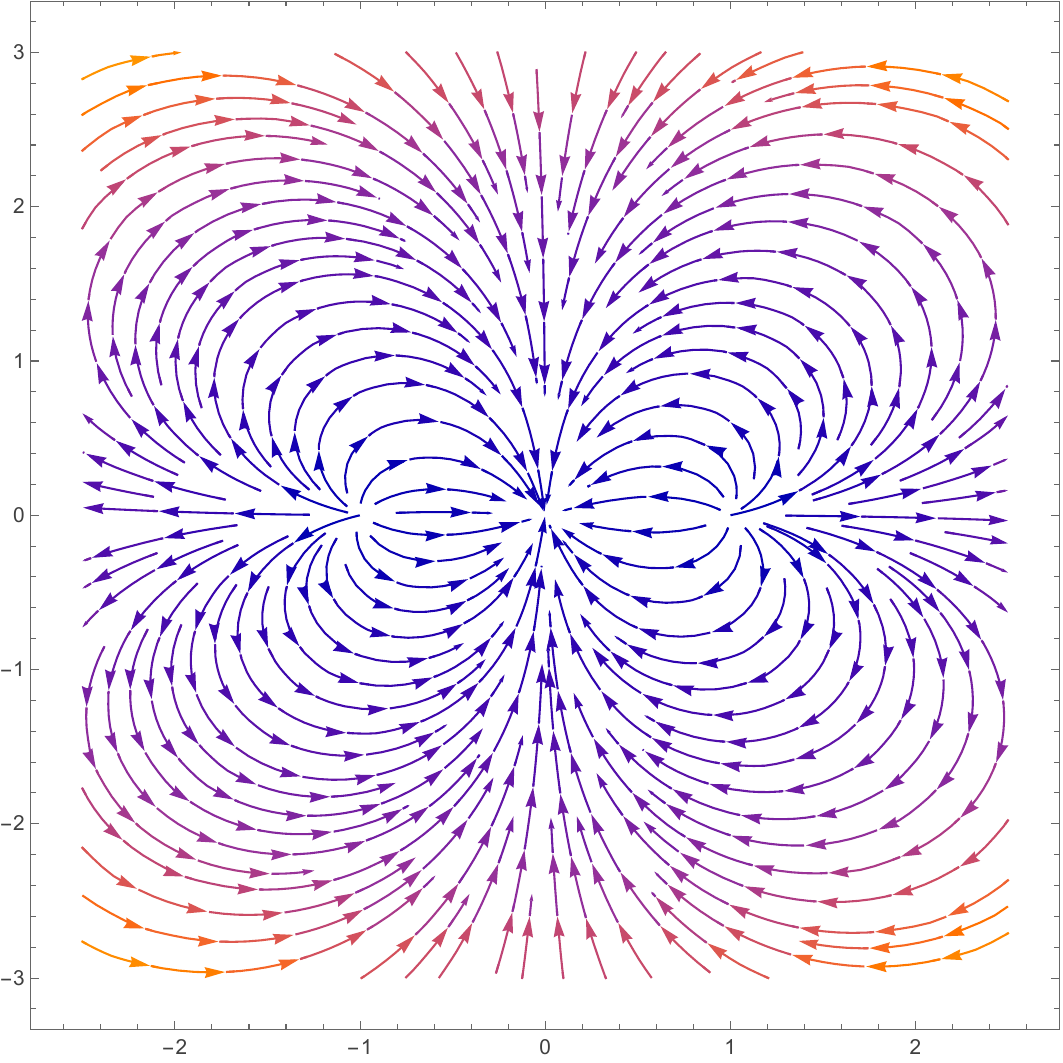}
       \caption{Local phase portrait of system \eqref{eq:planarODE} with $F(x)=x^3-x$, plotted with Wolfram Mathematica.}
   \end{figure}

    All equilibria $F^{-1}(\{0\})=\{0,1,-1\}$ are nodes. The nodes at $\pm 1$ are repelling, while the node at $a=0$ is attracting. By using the notation $\mathrm{i}\R{}:=\{x\in\C:\Re(x)=0\}$, we obtain the following two properties of the phase space by a straightforward calculation:
    \begin{itemize}
        \item $F(\alpha)\in\R{}$ for all $\alpha\in\R{}$, i.e. $\R{}$ is flow-invariant.
        \item $F(\mathrm{i}\beta)\in\mathrm{i}\R{}$ for all $\beta\in\R{}$, i.e. $\mathrm{i}\R{}$ is flow-invariant.
    \end{itemize}
    Thus, $(-1,0)$ as well as $(0,1)$ are orbits with distinct negative limit sets ($\{1\}$ and $\{-1\}$, respectively). Moreover, the upper (denoted $A_1$) and lower (denoted $A_2$) parts of the imaginary axis are orbits, both having empty negative limit sets. All four of these orbits lie in the basin of the node at $a=0$. We therefore conclude that these orbits cannot belong to the same canonical region, cf. \cite[Chapter II.3]{markus1954global}. In particular, since the orbits through arbitrarily close neighborhoods of $A_1$ and $A_2$ have nonempty negative limit sets, $A_1$ and $A_2$ must be separatrices in the sense of \cite{markus1954global}. A similar observation applies to foci if we consider the rotated vector field $\hat{F}(x):=(1+\mathrm{i})F(x)$, $x\in\C$.
\end{example}

Example \ref{ex:three_nodes} demonstrates that the basins of nodes and foci do not necessarily have to form canonical regions. Applying Proposition \ref{prop:nf_no_isolated_equilibria}, we obtain the following result: if $\mathcal{N}$ is a canonical region, then there cannot be any equilibria on $\partial\mathcal{N}$.

In general, nodal and spiral canonical regions can be constructed with the simple linear vector field $x\mapsto\lambda x$ with $\Re(\lambda)\not=0$, cf. \cite[Definition 3.1]{kainz2024local}.

\section{\bf Heteroclinic regions}
In this chapter we consider an entire function $F\in\Hol{}(\C{})$, $F\not\equiv0$, with two equilibria $a_+,a_-\in\C{}$ of \eqref{eq:planarODE} and at least one heteroclinic orbit connecting these two equilibria. By the holomorphy of $F$, the two equilibria have to be nodes, foci or multiple equilibria. Under these assumptions, it is possible to investigate topologically another characteristic set, the so-called "heteroclinic region". Taking up the questions at the end of chapter 3, a heteroclinic region is one possible structure between two adjacent global elliptic sectors. We verify that heteroclinic regions are strip canonical regions in the sense of Markus and Neumann, cf. \cite{markus1954global,neumann1976global}.

\subsection{Geometry of heteroclinic regions}

\begin{definition}
    We define the \textit{heteroclinic region} $\mathcal{H}$ of $F$ between $a_+$ and $a_-$ as
    \begin{align*}
        \mathcal{H}:=\mathcal{H}_+\cap\mathcal{H}_-
    \end{align*}
    with
    \begin{align*}
        \mathcal{H}_{+(-)}:=\left\{x\in\C{}:w_{+(-)}(\Gamma(x))=\{a_{+(-)}\}\right\}\text{.}
    \end{align*}
\end{definition}
\begin{proposition}\label{prop:hetreg_invariant}
    $\mathcal{H}$ and $\partial\mathcal{H}$ are flow-invariant. It holds
    \begin{align*}
		\mathcal{H}=\bigcup_{\mathclap{x\in\mathcal{H}}}\Gamma(x)\text{.}
    \end{align*}
\end{proposition}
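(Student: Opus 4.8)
The plan is to mirror exactly the structure of the proof of Proposition \ref{prop:center_invariant}, since $\mathcal{H}$ is defined as an intersection of two $\omega$-limit conditions analogous to those defining $\mathcal{N}$. First I would establish flow-invariance of $\mathcal{H}$: the sets $\mathcal{H}_+$ and $\mathcal{H}_-$ are each flow-invariant because the positive (resp. negative) limit set of an orbit is an intrinsic property of the orbit, not of the chosen initial point — this is precisely \cite[Lemma 6.4]{teschl2012ordinary} applied to each factor. An intersection of flow-invariant sets is flow-invariant, so $\mathcal{H}$ is flow-invariant. For the boundary, one uses that $\partial\mathcal{H} = \overline{\mathcal{H}} \cap \overline{\C\setminus\mathcal{H}}$ and that both the closure and the complement of a flow-invariant set are flow-invariant (the flow acts by homeomorphisms $\Phi(t,\cdot)$, so it preserves closures and complements), hence so is their intersection.

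Next I would prove the displayed identity $\mathcal{H} = \bigcup_{x\in\mathcal{H}}\Gamma(x)$. The inclusion $\mathcal{H} \subseteq \bigcup_{x\in\mathcal{H}}\Gamma(x)$ is immediate since $x \in \Gamma(x)$ for every $x \in \mathcal{H}$. For the reverse inclusion, let $y \in \Gamma(x)$ for some $x \in \mathcal{H}$; then $y = \Phi(t,x)$ for some $t \in \R$, and by flow-invariance of $\mathcal{H}$ just established, $y \in \mathcal{H}$. Thus the union is contained in $\mathcal{H}$, giving equality. This is the same two-line argument that underlies \eqref{eq:center_invariant} and the analogous statement in Proposition \ref{prop:nf_invariant}, so it can simply be cited as "analogous to the proof of Proposition \ref{prop:center_invariant}" as the paper does elsewhere.

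I do not expect any genuine obstacle here: unlike Proposition \ref{prop:center_invariant}, the statement does not assert that $\mathring{\mathcal{H}}$ is nonempty (which would require the actual existence of a heteroclinic orbit, guaranteed by the standing hypothesis of this chapter, plus an openness argument for nearby orbits), nor does it invoke Theorem \ref{thm:central_tool}; it is purely the soft invariance bookkeeping. The one point worth a moment's care is that the identity $\mathcal{H} = \bigcup_{x\in\mathcal{H}}\Gamma(x)$ should not be misread as the stronger claim $\mathcal{H} = \bigcup_{x\in\mathcal{H}}\overline{\Gamma(x)}$ — the limit points $a_+, a_-$ need not lie in $\mathcal{H}$, and indeed one expects them to lie on $\partial\mathcal{H}$, exactly parallel to the center case where $a \notin$ the relevant closed-orbit description but the union-of-orbits identity still holds on the open part. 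So the proof is a direct transcription: cite \cite[Lemma 6.4]{teschl2012ordinary} for invariance of each $\mathcal{H}_\pm$, note intersections and boundaries of invariant sets are invariant, and conclude the union identity from flow-invariance and $x \in \Gamma(x)$.

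\begin{proof}
    This proof is analogous to the proof of Proposition \ref{prop:center_invariant}. The sets $\mathcal{H}_+$ and $\mathcal{H}_-$ are each flow-invariant by \cite[Lemma 6.4]{teschl2012ordinary}, hence so is their intersection $\mathcal{H}$ and, consequently, $\partial\mathcal{H}=\overline{\mathcal{H}}\cap\overline{\C\setminus\mathcal{H}}$. The inclusion $\mathcal{H}\subset\bigcup_{x\in\mathcal{H}}\Gamma(x)$ is trivial, and the reverse inclusion follows from the flow-invariance of $\mathcal{H}$.
\end{proof}
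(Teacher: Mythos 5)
Your proposal is correct and matches the paper's approach: the paper itself proves this by declaring it "analogous to the proof of Proposition \ref{prop:center_invariant}", i.e. flow-invariance of $\mathcal{H}$ and $\partial\mathcal{H}$ via \cite[Lemma 6.4]{teschl2012ordinary} (applied through $\mathcal{H}_\pm$) and the union identity as a direct consequence of invariance. Your added remark that the statement deliberately omits $\mathring{\mathcal{H}}\not=\emptyset$ and does not need Theorem \ref{thm:central_tool} is an accurate reading of why the analogy only uses the "soft" part of that earlier proof.
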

\begin{proof}
    This proof is analogous to the proof of Proposition \ref{prop:center_invariant}.
\end{proof}
\begin{theorem}\label{thm:hetreg_open}
    $\mathcal{H}$ is open.
\end{theorem}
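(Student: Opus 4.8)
The plan is to show that $\mathcal{H}=\mathcal{H}_+\cap\mathcal{H}_-$ is open by proving that each of the two sets $\mathcal{H}_+$ and $\mathcal{H}_-$ is open, since a finite intersection of open sets is open. This reduces the problem to a single statement: the set of points whose positive (respectively negative) limit set equals a fixed node or focus $a_{+(-)}$ is open. But this is exactly the content of Theorem \ref{thm:nf_open} applied to $a_+$ (for $\mathcal{H}_+$) and to $a_-$ (for $\mathcal{H}_-$): in the notation of Definition \ref{def:node_focus}, $\mathcal{H}_+$ is the stable basin $\mathcal{N}$ of $F$ in $a_+$, and $\mathcal{H}_-$ is the unstable basin $\mathcal{N}$ of $F$ in $a_-$ (or stable/unstable depending on the attracting/repelling types, which is immaterial to openness).

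First I would note that by the holomorphy of $F$, both $a_+$ and $a_-$ are nodes, foci, or multiple equilibria (as already stated at the start of the chapter); but in fact, if $\Gamma(x)$ is a nonconstant orbit with $w_+(\Gamma(x))=\{a_+\}$ then by \cite[Theorem 4.4]{kainz2024local} (or the local classification) the equilibrium $a_+$ cannot be a center, and the presence of orbits limiting to it in forward time is consistent with $a_+$ being an attracting node/focus or a multiple equilibrium. In the node/focus case, Theorem \ref{thm:nf_open} directly gives that $\mathcal{H}_+=\mathcal{N}(a_+)$ is open. If $a_+$ is a multiple equilibrium, one still has openness of the relevant attracting set by essentially the same argument as in the proof of Theorem \ref{thm:nf_open}: given $x\in\mathcal{H}_+$ with $x\ne a_+$, one flows forward for time $\tau$ into a small neighborhood of $a_+$ where, by \cite[Proposition 4.3]{kainz2024local}, the local geometry guarantees that nearby orbits in an appropriate sector also tend to $a_+$, then uses continuity of the flow (cf. \cite[Chapter 2.4, Theorem 4]{perko2013differential}) to transfer this openness back along the orbit to a neighborhood of $x$. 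The same reasoning applied with time reversed shows $\mathcal{H}_-$ is open.

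Hence I would structure the proof as: (1) observe $\mathcal{H}_+$ is the stable/unstable basin of $a_+$ and $\mathcal{H}_-$ that of $a_-$; (2) invoke the openness established in Theorem \ref{thm:nf_open} — or, in the multiple-equilibrium case, repeat the flow-plus-continuity argument from that proof using the sectorial attracting set from \cite[Proposition 4.3]{kainz2024local}; (3) conclude that $\mathcal{H}=\mathcal{H}_+\cap\mathcal{H}_-$ is open as a finite intersection of open sets. The main subtlety — rather than a genuine obstacle — is handling the case where one or both of $a_\pm$ is a multiple equilibrium rather than a simple node/focus, so that Theorem \ref{thm:nf_open} does not literally apply and one must reprove openness locally; but this is routine given \cite[Proposition 4.3]{kainz2024local} and the continuity of the flow, so I expect no real difficulty. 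Everything else is immediate.
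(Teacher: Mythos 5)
Your proposal is correct and follows essentially the same route as the paper: split $\mathcal{H}=\mathcal{H}_+\cap\mathcal{H}_-$, invoke Theorem \ref{thm:nf_open} when $a_\pm$ is a node or focus, and in the multiple-equilibrium case use the attracting sector from \cite[Proposition 4.3]{kainz2024local} together with continuity of the flow to pull openness back along the orbit. No substantive difference from the paper's proof.
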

\begin{proof}
    Both, $a_+$ as well $a_-$ is either a focus or node or an equilibrium of order $m\in\mathbb{N}\setminus\{1\}$. Let $x\in\mathcal{H}_+$ and assume that $a_+$ is an equilibrium of order $m\ge 2$. By \cite[Proposition 4.3]{kainz2024local}, there exist numbers $r,\theta,\Theta,\tau>0$ such that for all $z\in\{\hat{z}\in\C{}:|\hat{z}-a_+|<r,|\arg(\hat{z}-a_+)-\theta|<\Theta\}=:A$ the orbit through $z$ tends to $a$ for $t\to\infty$ and $M:=\Phi(\tau,x)\in A$. Since $A$ is open, we can choose $\varepsilon>0$ small enough such that $\mathcal{B}_{\varepsilon}(M)\subset A$. By continuity of the flow, cf. \cite[Chapter 2.4, Theorem 4]{perko2013differential}, there exists $\delta>0$ such that $|\Phi(\tau,x)-\Phi(\tau,y)|<\varepsilon$ for all $y\in\mathcal{B}_{\delta}(x)$, i.e. $\mathcal{B}_{\delta}(x)\subset\mathcal{H}_+$ and $\mathcal{H}_+$ is open. Additionally, if $a_+$ is a node or focus, then we can apply Theorem \ref{thm:nf_open} to conclude again the openness of $\mathcal{H}_+$. Since the same argumentation holds for $a_-$, $\mathcal{H}_-$ is also open. It follows that $\mathcal{H}=\mathcal{H}_+\cap\mathcal{H}_-$ is open.
\end{proof}
%\begin{lemma}\label{lem:hetreg_no_other_equilibria_on_boundary}
%    The basins of all centers, foci and nodes $a\in F^{-1}(\{0\})\setminus\{a_+,a_-\}$ have empty intersection with $\mathcal{H}$. In particular, there are no centers, nodes and foci on $\partial\mathcal{H}$.
%\end{lemma}
%\begin{proof}
%    There are no periodic orbits in $\mathcal{H}$. Additionally, $a\not\in\{a_+,a_-\}$. Thus, this proof is analogous to the proof of Lemma \ref{lem:nf_no_centers_on_boundary}.
%\end{proof}
\begin{proposition}\label{prop:hetreg_equilibria_on_boundary}
    It holds $\partial\mathcal{H}\cap F^{-1}(\{0\})=\{a_+,a_-\}$.
\end{proposition}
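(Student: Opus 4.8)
The plan is to show the two inclusions $\{a_+,a_-\}\subset\partial\mathcal{H}\cap F^{-1}(\{0\})$ and $\partial\mathcal{H}\cap F^{-1}(\{0\})\subset\{a_+,a_-\}$ separately, mimicking the structure of the proof of Proposition \ref{prop:globalSector_Equilibria_on_boundary}. For the first inclusion, I would argue that by hypothesis there is a heteroclinic orbit $\Gamma$ connecting $a_+$ and $a_-$; this orbit lies in $\mathcal{H}$ (since $w_+(\Gamma)=\{a_+\}$ and $w_-(\Gamma)=\{a_-\}$), and it accumulates on both $a_+$ and $a_-$. Hence $a_\pm\in\overline{\mathcal{H}}$. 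Since $a_\pm$ are equilibria, $\Gamma(a_\pm)=\{a_\pm\}$ is not heteroclinic from $a_+$ to $a_-$, so $a_\pm\notin\mathcal{H}$; therefore $a_\pm\in\overline{\mathcal{H}}\setminus\mathcal{H}\subset\partial\mathcal{H}$, and of course $a_\pm\in F^{-1}(\{0\})$.

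For the reverse inclusion, let $x\in\partial\mathcal{H}\cap F^{-1}(\{0\})$ and suppose $x\notin\{a_+,a_-\}$. The key observation is that every orbit in $\mathcal{H}$ tends to $a_+$ in positive time and to $a_-$ in negative time, so no orbit in $\mathcal{H}$ is periodic; moreover since $x$ is in the closure of $\mathcal{H}$ but $x\neq a_\pm$, arbitrarily close to $x$ there are points whose orbits do not tend to $x$ in either time direction. This rules out $x$ being a center (by Lemma \ref{lem:no_center_on_boundary_of_invariant_sets}, since $\mathcal{H}$ is a flow-invariant set admitting no closed orbits, $\mathcal{V}\cap\partial\mathcal{H}=\emptyset$) as well as a node or focus (its basin would meet $\mathcal{H}$, contradicting that orbits in $\mathcal{H}$ tend to $a_+\neq x$ and $a_-\neq x$). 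By the classification of equilibria and the holomorphy of $F$, cf. \cite[Theorem 4.4]{kainz2024local}, $x$ must therefore have order $m\ge 2$ and possess a FED; in particular there is $r>0$ with $x\in w_+(\Gamma(z))\cup w_-(\Gamma(z))$ for all $z\in\mathcal{B}_r(x)$. Since $x\in\partial\mathcal{H}$, we have $\mathcal{B}_r(x)\cap\mathcal{H}\neq\emptyset$, so some orbit in $\mathcal{H}$ has $x$ in a limit set; but orbits in $\mathcal{H}$ have limit sets $\{a_+\}$ and $\{a_-\}$, forcing $x\in\{a_+,a_-\}$, a contradiction. Hence $\partial\mathcal{H}\cap F^{-1}(\{0\})\subset\{a_+,a_-\}$.

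The main obstacle I anticipate is the case analysis ruling out $x$ being a focus or node when $x\notin\{a_+,a_-\}$: one must be careful that a repelling equilibrium on the boundary could in principle have its unstable basin meet $\mathcal{H}$ only through points whose orbits nonetheless land in $a_+$ — but this is impossible precisely because membership in a node/focus basin forces the limit set to be that equilibrium, which differs from both $a_+$ and $a_-$. A secondary subtlety is confirming that $\mathcal{H}$ genuinely admits no closed orbits (needed to invoke Lemma \ref{lem:no_center_on_boundary_of_invariant_sets}); this is immediate since any $x\in\mathcal{H}$ has $w_+(\Gamma(x))=\{a_+\}$, and a periodic orbit has itself as its positive limit set, which is not a single equilibrium. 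Everything else is a direct transcription of the arguments already used for the global elliptic sector and the basin of nodes and foci.
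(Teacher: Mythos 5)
Your proposal is correct and follows essentially the same route as the paper: the heteroclinic orbit puts $a_\pm$ in $\overline{\mathcal{H}}\setminus\mathcal{H}\subset\partial\mathcal{H}$, centers are excluded via Lemma \ref{lem:no_center_on_boundary_of_invariant_sets}, nodes/foci are excluded because their basins would meet $\mathcal{H}$ while all orbits in $\mathcal{H}$ tend to $a_+$ and $a_-$, and the remaining case of a multiple equilibrium is ruled out with the FED property from \cite[Theorem 4.4]{kainz2024local} exactly as in the paper. Your extra verification that $\mathcal{H}$ contains no closed orbits (so the lemma applies) is a detail the paper leaves implicit, but otherwise the arguments coincide.
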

\begin{proof}
    Since there exists at least one heteroclinic orbit in $\mathcal{H}$ connecting $a_+$ and $a_-$, we have $\{a_+,a_-\}\subset\partial\mathcal{H}$. Suppose there exists an equilibrium $a\in\partial\mathcal{H}\setminus\{a_+,a_-\}$. By Lemma \ref{lem:no_center_on_boundary_of_invariant_sets}, $a$ cannot be a center. Additionally, since all orbits in $\mathcal{H}$ connect $a_+$ to $a_-$, $a$ cannot be a node a focus. Hence, by using the holomorphy of $F$, we can apply \cite[Theorem 4.4]{kainz2024local} to conclude that $a$ has a FED. In particular, there exists $r>0$ such that $a\in w_+(\Gamma(x))\cup w_-(\Gamma(x))$ for all $x\in\mathcal{B}_r(a)$. We get $\mathcal{B}_r(a)\cap\mathcal{H}\not=\emptyset$, which is also impossible. Thus, we conclude $\partial\mathcal{H}\cap F^{-1}(\{0\})=\{a_+,a_-\}$.
\end{proof}
\begin{theorem}\label{thm:hetreg_unbounded}
    All orbits on $\partial\mathcal{H}\setminus\{a_+,a_-\}$ are unbounded. $\mathcal{H}$ is unbounded.
\end{theorem}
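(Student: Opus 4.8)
The plan is to mirror the strategy used for Theorem \ref{thm:center_open} and Theorem \ref{thm:nf_unbounded}: first show that no orbit on $\partial\mathcal{H}\setminus\{a_+,a_-\}$ can be bounded, and then deduce the unboundedness of $\mathcal{H}$ itself. So let $x\in\partial\mathcal{H}\setminus\{a_+,a_-\}$ and suppose $\Gamma(x)\subset\partial\mathcal{H}\setminus\{a_+,a_-\}$ is bounded. Then $K:=\overline{\Gamma(x)}$ is compact and $\Gamma(x)\cup w_\pm(\Gamma(x))\subset K\subset\partial\mathcal{H}$. By the holomorphy of $F$ we invoke Theorem \ref{thm:central_tool}: either $\Gamma(x)$ is periodic with a center $\tilde a$ in its interior, or $w_+(\Gamma(x))$ and $w_-(\Gamma(x))$ each consist of a single equilibrium. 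The periodic case is ruled out by Lemma \ref{lem:no_center_on_boundary_of_invariant_sets}, since $\mathcal{H}$ is flow-invariant (Proposition \ref{prop:hetreg_invariant}) and contains no closed orbits (all orbits in $\mathcal{H}$ are heteroclinic from $a_+$ to $a_-$). Hence $\Gamma(x)$ is homoclinic or heteroclinic with limit sets among the equilibria on $\partial\mathcal{H}$, which by Proposition \ref{prop:hetreg_equilibria_on_boundary} are exactly $\{a_+,a_-\}$.

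Now I would split into the remaining cases exactly as in the proof of Theorem \ref{thm:nf_unbounded}. If $\Gamma(x)$ is homoclinic in one of $a_+$ or $a_-$ — say $a_+$ — then $a_+$ has order $m\ge 2$, and by Corollary \ref{cor:globalSector_final_corrollaries_1} $\Gamma(x)$ is a sector-forming orbit lying in some global elliptic sector $\mathcal{S}_0$ in $a_+$. Picking $z\in\Gamma(x)\subset\mathcal{S}_0\cap\partial\mathcal{H}$ and using the openness of $\mathcal{S}_0$ (Theorem \ref{thm:globalSector_open}), we find a point of $\mathcal{S}_0$ arbitrarily close to $z$; but the orbits in $\mathcal{S}_0$ are all homoclinic in $a_+$ and hence cannot lie in $\mathcal{H}$ (whose orbits run from $a_+$ to $a_-\ne a_+$), while openness of $\mathcal{H}$ (Theorem \ref{thm:hetreg_open}) forces a whole neighborhood of $z\in\partial\mathcal{H}$ to meet $\mathcal{H}$ — contradiction. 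If instead $\Gamma(x)$ is heteroclinic, it connects $a_+$ to $a_-$ (the only equilibria available), with $w_+(\Gamma(x))=\{a_+\}$ and $w_-(\Gamma(x))=\{a_-\}$ after possibly relabeling. Here I use \cite[Proposition 4.3]{kainz2024local} at $a_+$ (treating the focus/node case via Theorem \ref{thm:nf_open} instead, or noting that if $a_+$ is a focus/node its basin meets $\mathcal{H}$ giving the contradiction directly): there is a sectorial open region $A$ based at $a_+$ all of whose orbits tend to $a_+$ in positive time, and some $\Phi(\tau,x)\in A$, so $A$ meets $\Gamma(x)$. Since $A$ is open it contains a point $x_0\in A\cap\mathcal{H}$; but then $\Gamma(x_0)\subset\mathcal{H}$ would have to be heteroclinic from $a_+$ to $a_-$, yet it tends to $a_+$ in positive time, and — the point — the corresponding orbit on the boundary forces, via the openness of $\mathcal{H}$ together with the sectorial structure, a contradiction with $\Gamma(x)\subset\partial\mathcal{H}$. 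I would streamline this so that the heteroclinic sub-case is handled by the same sectorial-neighborhood argument as in Theorem \ref{thm:nf_unbounded}, producing an orbit that lies in $\mathcal{H}$ yet should lie on $\partial\mathcal{H}$.

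Having shown $\Gamma(x)$ cannot be bounded, every orbit on $\partial\mathcal{H}\setminus\{a_+,a_-\}$ is unbounded. For the unboundedness of $\mathcal{H}$: since there is at least one heteroclinic orbit $\Gamma_0\subset\mathcal{H}$ with $w_+(\Gamma_0)=\{a_+\}$, $w_-(\Gamma_0)=\{a_-\}$, the set $\partial\mathcal{H}$ is nonempty (it contains $a_+,a_-$), and if $\mathcal{H}$ were bounded its closure $\overline{\mathcal{H}}$ would be compact, hence $\partial\mathcal{H}$ compact; but then any orbit in $\partial\mathcal{H}\setminus\{a_+,a_-\}$ would be bounded, and if $\partial\mathcal{H}$ consisted only of $\{a_+,a_-\}$ then $\overline{\mathcal{H}}=\mathcal{H}\cup\{a_+,a_-\}$ would be both open-minus-two-points and compact, which is impossible for a nonempty open proper subset of $\C$ — more cleanly, a bounded $\mathcal{H}$ would force $\overline{\Gamma_0}$ compact inside $\overline{\mathcal{H}}$, and a limiting argument at $\partial\mathcal{H}$ together with the already-established dichotomy yields the contradiction. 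I expect the main obstacle to be the heteroclinic sub-case: ensuring that the sectorial region $A$ at $a_+$ can be chosen so that the orbit it produces genuinely witnesses a contradiction (i.e. lies in $\mathcal{H}$ while a nearby boundary orbit must also tend to $a_+$), which requires carefully transporting the \cite[Proposition 4.3]{kainz2024local} local picture along the boundary orbit and invoking openness of both $\mathcal{H}$ and the relevant sector; the homoclinic sub-case and the final unboundedness deduction are routine once Corollary \ref{cor:globalSector_final_corrollaries_1}, Theorem \ref{thm:globalSector_open}, and Proposition \ref{prop:hetreg_equilibria_on_boundary} are in hand.
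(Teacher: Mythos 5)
Your skeleton matches the paper's proof: rule out a periodic boundary orbit via Lemma \ref{lem:no_center_on_boundary_of_invariant_sets}, use Theorem \ref{thm:central_tool} and Proposition \ref{prop:hetreg_equilibria_on_boundary} to force the limit sets into $\{a_+,a_-\}$, and handle the homoclinic case by producing nearby orbits that are homoclinic in $a_+$ (the paper does this with Lemma \ref{lemma:globalSector_nested_orbits}; your route through Corollary \ref{cor:globalSector_final_corrollaries_1} and the openness of the global elliptic sector, as in Theorem \ref{thm:nf_unbounded}, works just as well). The final deduction of unboundedness is also essentially the paper's: if $\partial\mathcal{H}=\{a_+,a_-\}$ then $\mathcal{H}$ is open and closed in the connected set $\C\setminus\{a_+,a_-\}$, hence equals it; otherwise an unbounded orbit on $\partial\mathcal{H}\subset\overline{\mathcal{H}}$ forces $\mathcal{H}$ to be unbounded. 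Your phrasing of this last step is more convoluted than necessary, but the content is there.

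The genuine gap is the heteroclinic sub-case, exactly the point you flag as "the main obstacle". The argument you try to transplant from Theorem \ref{thm:nf_unbounded} cannot work here: in that proof the contradiction came from producing an orbit $\Gamma(x_0)\subset\mathcal{N}$ tending to $a_+\neq a$, whereas \emph{every} orbit of $\mathcal{H}$ tends to $a_+$ in positive time, so finding $x_0\in A\cap\mathcal{H}$ with $\Gamma(x_0)\to a_+$ contradicts nothing; likewise "if $a_+$ is a focus/node its basin meets $\mathcal{H}$" is no contradiction, since $\mathcal{H}$ is contained in that basin by definition. The phrase "after possibly relabeling" also blurs the fixed roles of $a_+$ and $a_-$ in the definition of $\mathcal{H}$. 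What is actually needed (and is the paper's one-line argument) is pure set membership: a boundary orbit $\Gamma$ with $w_+(\Gamma)=\{a_+\}$ and $w_-(\Gamma)=\{a_-\}$ lies in $\mathcal{H}$ by Definition of $\mathcal{H}$, which is impossible because $\mathcal{H}$ is open (Theorem \ref{thm:hetreg_open}) and hence disjoint from $\partial\mathcal{H}$. No sectorial neighborhood, no transport of the local picture along the boundary orbit. With that replacement your proof closes; without it, the heteroclinic case is not established.
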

\begin{proof}
    If $\partial\mathcal{H}=\{a_+,a_-\}$, then $\mathcal{H}\not=\emptyset$ is open and closed in the connected set $\C{}\setminus\{a_+,a_-\}$, i.e. $\mathcal{H}=\C{}\setminus\{a_+,a_-\}$ is clearly unbounded. Thus, we assume $\partial\mathcal{H}\setminus\{a_+,a_-\}\not=\emptyset$.
    
    Suppose there exists a bounded orbit $\Gamma\subset\partial\mathcal{H}\setminus\{a_+,a_-\}$. By Lemma \ref{lem:no_center_on_boundary_of_invariant_sets}, $\Gamma$ cannot be periodic. Hence, by Proposition \ref{prop:hetreg_equilibria_on_boundary} and Theorem \ref{thm:central_tool}, we must have $w_+(\Gamma)\cup w_-(\Gamma)\subset\{a_+,a_-\}$. Additionally, by openness of $\mathcal{H}$, $\Gamma$ cannot be heteroclinic. It follows that $\Gamma$ must be homoclinic, w.l.o.g. in $a_+$, i.e. $w_+(\Gamma)=w_-(\Gamma)=\{a_+\}$ and $a_+$ is an equilibrium of order $m\ge 2$. We choose a point $x\in\Gamma\subset\partial\mathcal{H}\setminus\{a_+,a_-\}$. By Lemma \ref{lemma:globalSector_nested_orbits}, there exists $\delta>0$ such that all orbits $\hat{\Gamma}$ through $\mathcal{B}_\delta(x)$ are also homoclinic in $a_+$. Choosing $\hat{\Gamma}\in\mathcal{H}$ leads to a contradiction. We conclude that $\Gamma$ cannot be bounded. Moreover, since $\partial\mathcal{H}\not=\{a_+,a_-\}$, there exists at least one unbounded orbit on $\partial\mathcal{H}$. Therefore, it follows that $\mathcal{H}$ is unbounded.
\end{proof}
\begin{corollary}\label{cor:hetreg_no_cycle_with_heteroclinic_regions}
    \it Let $p\in\mathbb{N}\setminus\{1\}$. It is not possible to find $p$ equilibria $a_1,\ldots,a_p\in F^{-1}(\{0\})$ and $p$ heteroclinic orbits $\Gamma_1,\ldots,\Gamma_p$ such that $\Gamma_p$ is a heteroclinic orbit connecting $a_p$ and $a_1$ and $\Gamma_j$ is a heteroclinic orbit connecting $a_j$ and $a_{j+1}$ for all $j\in\{1,\ldots,p\}$.
\end{corollary}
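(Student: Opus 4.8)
The plan is to argue by contradiction, assembling the $p$ heteroclinic orbits together with the $p$ equilibria into a closed Jordan curve $\Gamma$ and then derive a contradiction from the orientation of the vector field along it. First I would note that each heteroclinic orbit $\Gamma_j$, together with its two endpoint equilibria $a_j,a_{j+1}$ (indices mod $p$), forms a compact connected arc $\overline{\Gamma_j}=\Gamma_j\cup\{a_j,a_{j+1}\}$; a compact-time parameterization of each such arc is available exactly as in the footnote to Definition~\ref{def:globalSector} (via \cite[Remark 4.2]{kainz2024local}, applied here to a heteroclinic rather than homoclinic orbit). Concatenating these $p$ arcs cyclically yields a closed curve $\Gamma:=\bigcup_{j=1}^p\overline{\Gamma_j}$. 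The first technical point is to arrange that $\Gamma$ is in fact a \emph{simple} closed (Jordan) curve: the arcs are pairwise disjoint except possibly at shared equilibria, and two distinct heteroclinic orbits through the same equilibrium approach it along distinct definite directions by \cite[Proposition 4.3]{kainz2024local}, so the only way $\Gamma$ could fail to be simple is if some equilibrium is visited more than twice or some $a_j$ coincide; in the minimal counterexample (smallest $p$) this cannot happen, so we may assume $\Gamma$ is Jordan.

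Next I would invoke the Jordan curve Theorem (as used throughout the paper) to split $\C\setminus\Gamma$ into $\Int(\Gamma)$ and $\Ext(\Gamma)$, and then apply Theorem~\ref{thm:central_tool} in the form already exploited in the proofs of Theorems~\ref{thm:center_open}, \ref{thm:globalSector_unbounded}, \ref{thm:nf_unbounded} and \ref{thm:hetreg_unbounded}. The set $K:=\overline{\Int(\Gamma)}$ is compact and flow-invariant-forward or backward for suitable starting points near $\Gamma$; more precisely, pick a point $\xi$ on one of the open arcs $\Gamma_j$ and slightly inside $\Int(\Gamma)$ — by continuity of the flow its forward (or backward) orbit stays in $K$. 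Then Theorem~\ref{thm:central_tool} forces either a periodic orbit enclosing a center inside $\Gamma$, or convergence of that orbit to a single equilibrium. Each of the $a_j$ on $\partial K$ is a node, focus, or multiple equilibrium (holomorphy rules out saddles and, by Lemma~\ref{lem:no_center_on_boundary_of_invariant_sets}, a center cannot lie on the boundary of the invariant set $\bigcup_j\Gamma_j$), and for a multiple equilibrium the FED (cf.~\cite[Theorem 4.4]{kainz2024local}) produces, in every punctured neighborhood, orbits entering the equilibrium from a sector that points into $\Int(\Gamma)$ — exactly as in the heteroclinic case of Theorem~\ref{thm:nf_unbounded}. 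This yields interior orbits limiting onto one of the $a_j$, hence $\Int(\Gamma)\cap\mathcal{N}\ne\emptyset$ for a basin $\mathcal{N}$ (or $\mathcal{S}$, or $\mathcal{V}$) of one of the equilibria whose openness (Theorems~\ref{thm:nf_open}, \ref{thm:globalSector_open}, \ref{thm:center_open}) then propagates across $\Gamma$.

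The main obstacle is the transition from "some orbit in $\Int(\Gamma)$ tends to an equilibrium $a_j\in\Gamma$" to an actual contradiction. The clean way is this: the chain $\Gamma_1,\dots,\Gamma_p$ realizes a cyclic heteroclinic structure, so by the openness results above, the heteroclinic region $\mathcal{H}$ associated with the pair $(a_j,a_{j+1})$ — or the appropriate basin when an $a_j$ is a node/focus — is open and contains points arbitrarily close to $\Gamma_j$ on \emph{both} sides. Fix the orbit $\Gamma_j$ and a point $x\in\Gamma_j$; by Theorem~\ref{thm:hetreg_open} (or Theorem~\ref{thm:nf_open}) a whole ball $\mathcal{B}_\delta(x)$ lies in $\mathcal{H}_+\cap\mathcal{H}_-$ in the relevant sense, so all nearby orbits are heteroclinic with the \emph{same} endpoints $a_j,a_{j+1}$. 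Following the crescent-region argument of \cite[\S16.7, Lemma 15]{andronov1973qualitativetheory} exactly as in Lemma~\ref{lemma:globalSector_nested_orbits}, the nearby heteroclinic orbits on the two sides of $\Gamma_j$ together with $\Gamma_j$ and $\{a_j,a_{j+1}\}$ bound a bounded lune-shaped region, whereas an orbit coming from $\Int(\Gamma)$ and limiting onto $a_j$ along a definite direction that points into $\Int(\Gamma)$ must, by the construction of $\Gamma$ as a cycle, separate $a_{j+1}$ from $a_{j-1}$ inside $\Int(\Gamma)$ — which is incompatible with all nearby orbits having both endpoints on the \emph{same} arc of $\Gamma$. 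This contradiction completes the proof; I expect the bookkeeping of which definite direction points inward, and the reduction to a minimal (Jordan) cycle, to be the fiddly parts, while the dynamical core is just the by-now-standard combination of Theorem~\ref{thm:central_tool} with the openness of the various basins.
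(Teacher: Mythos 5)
There is a genuine gap: your sketch assembles the right ingredients (the Jordan curve $J$ built from the cycle, flow-invariance, openness of the heteroclinic regions via Theorem \ref{thm:hetreg_open}, the unboundedness result of Theorem \ref{thm:hetreg_unbounded}) but never combines them into a valid contradiction. The fact that "some orbit in $\Int(J)$ tends to $a_j$" cannot be the source of a contradiction: by the very openness you invoke, the orbits just inside $\Int(J)$ near $\Gamma_j$ \emph{are} heteroclinic from $a_j$ to $a_{j+1}$, so interior orbits limiting onto $a_j$ are exactly what one expects. Your closing claim --- that such an orbit "separates $a_{j+1}$ from $a_{j-1}$ inside $\Int(J)$" and that this is "incompatible with all nearby orbits having both endpoints on the same arc" --- is neither established nor obviously contradictory: an interior orbit heteroclinic from $a_j$ to $a_{j+1}$ is perfectly consistent with everything you have proved. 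The Poincar\'e--Bendixson detour (Theorem \ref{thm:central_tool}) adds nothing beyond this. A further structural warning sign is that your argument is orientation-blind: for $p=2$ with both orbits running in the \emph{same} direction the configuration actually exists (for $F(z)=z^2-1$ the segment $(-1,1)$ and the upper unit semicircle are two heteroclinic orbits, both from $1$ to $-1$, bounding a Jordan curve), so the statement must be read as a directed cycle and any correct proof has to use that orientation somewhere --- e.g. through the disjointness of the associated heteroclinic regions --- which your sketch never does.

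For comparison, the paper closes the argument with a short connectedness/separation argument rather than anything local at the equilibria: with $J:=a_1\cup\ldots\cup a_p\cup\Gamma_1\cup\ldots\cup\Gamma_p$, the sets $U:=\mathcal{H}_1$ and $V:=\mathcal{H}_2\cup\ldots\cup\mathcal{H}_p$ are open (Theorem \ref{thm:hetreg_open}) and disjoint (here the cyclic orientation enters, since the ordered limit-set pairs differ), and both meet $\Int(J)$ because each $\Gamma_j\subset\mathcal{H}_j$ lies on the boundary of $\Int(J)$. Since $\Int(J)$ is connected it cannot be covered by $U\cup V$, so there exists $x\in\partial\mathcal{H}_1\cap\Int(J)$. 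By flow-invariance of $\partial\mathcal{H}_1$ (Proposition \ref{prop:hetreg_invariant}) and of $\Int(J)$, the whole orbit $\Gamma(x)$ stays in the bounded set $\Int(J)$; it is not an equilibrium because $\partial\mathcal{H}_1\cap F^{-1}(\{0\})=\{a_1,a_2\}\subset J$ (Proposition \ref{prop:hetreg_equilibria_on_boundary}); yet Theorem \ref{thm:hetreg_unbounded} forces it to be unbounded --- a contradiction. If you replace your final paragraph by this (or an equivalent) global separation argument, and state explicitly that the heteroclinic connections are oriented cyclically so that the $\mathcal{H}_j$ relevant to your chosen decomposition are pairwise disjoint, the proof goes through; as written, the decisive step is missing.
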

\begin{proof}
    Suppose we find $p$ equilibria and $p$ heteroclinic orbits with these properties. Then $J:=a_1\cup\ldots\cup a_p\cup\Gamma_1\cup\ldots\cup\Gamma_p$ is a closed piecewise continuously differentiable Jordan curve. Let $\mathcal{H}_j$ be the heteroclinic region defined by $\Gamma_j$. Define the two open sets $U:=\mathcal{H}_1$ and $V:=\mathcal{H}_2\cup\ldots\cup\mathcal{H}_p$, cf. Theorem \ref{thm:hetreg_open}. Then there holds $U\cap V=\emptyset$ as well as, by Theorem \ref{thm:hetreg_open}, $U\cap\Int(J)\not=\emptyset$ and $V\cap\Int(J)\not=\emptyset$. Since $\Int(J)$ is connected, we get $U\cup V\not=\Int(J)$ and thus there exists $x\in\partial U\cap\Int(J)\not=\emptyset$. By Theorem \ref{thm:hetreg_unbounded}, $\Gamma(x)\subset\partial\mathcal{H}_1\cap\Int(J)$ is unbounded, which is a contradiction.
\end{proof}
This illustrates again that holomorphy severely constrains the possible geometric configurations of the phase space. In fact, Corollary \ref{cor:hetreg_no_cycle_with_heteroclinic_regions} does not hold if $F$ is not holomorphic. This happens in particular when one of the equilibria in Corollary \ref{cor:hetreg_no_cycle_with_heteroclinic_regions} is a saddle, cf. e.g. \cite[Chapter 2]{brickman1977conformal}.

Having established all of the above topological properties, we can now assemble them to reveal the full geometric structure of the heteroclinic region. The following argumentation shows the existence of a global transversal for a heteroclinic region. 

\begin{itemize}
	\item[(I)] By using the local FED geometry in a multiple equilibrium, we deduce the existence of a transversal $T$ connecting two adjacent global elliptic sectors that have no common boundary near the multiple equilibrium, cf. \cite[Definition 4.1]{kainz2024local} and Figure \ref{fig:local_elliptic_sector}. Every orbit intersecting $T$, crosses it exactly once. Assume that a heteroclinic region $\mathcal{H}$ lies between these two sectors, and define the set $A=\mathcal{H}\cap T$. Let $q_1,q_2\in A$, and consider the closed Jordan-curve $J$ formed by $\Gamma(q_1)$, $\Gamma(q_2)$, $a_+$ and $a_-$. Connecting $q_1$ and $a_2$ along the segment of $T$, one may ask whether this segment could leave $\mathcal{H}$? The answer is no. In fact, since the segment lies entirely in the interior of $J$, this would contradict Corollary \ref{cor:hetreg_no_cycle_with_heteroclinic_regions} (with $p=2$). It follows that all points in $A$ are connected along the transversal $T$. Taking now the longest possible segment of the transversal $T$ contained in $\mathcal{H}$, we obtain an open smooth arc crossing each closed orbit in $\mathcal{H}$ exactly once. This is illustrated in Figure \ref{fig:hetreg_left}.
	\item[(II)] Let $a$ be a focus or node on the boundary of a heteroclinic region $\mathcal{H}$. By applying the theory of circles without contact around nodes and foci, cf. \cite[\S3, 10.-14., \S7, 1.-2. and \S18, Lemma 3]{andronov1973qualitativetheory}, we find always a continuously differentiable closed path $C\subset\mathcal{H}$ being nowhere tangential to $F$ and satisfying $\Int(C)\cap F^{-1}(\{0\})=\{a\}$. All orbits in the basin of $a$ cross $C$ exactly once. This is illustrated in Figure \ref{fig:hetreg_right}.
	\item[(III)] If $a_+$ or $a_-$ is a multiple equilibrium, we can apply (I) to deduce the existence of global transversal. If both, $a_+$ and $a_-$ are foci/nodes, then we can apply Proposition \ref{prop:nf_no_isolated_equilibria} to show the existence of at least one unbounded orbit $\Gamma$ with $a_+$ in its limit set. Since this orbit intersects $C$ at exactly one point, cf. (II), $C\cap\mathcal{H}$ cannot be a closed segment of $C$, cf. Figure \ref{fig:hetreg_right}. Analogous to the argument in (I), $C\cap\mathcal{H}$ has to be path-connected and is thus a global transversal for $\mathcal{H}$.
\end{itemize}

\begin{minipage}{0.45\textwidth}
    \centering
    \tikzset{every picture/.style={line width=0.75pt}}
    \begin{tikzpicture}[x=0.75pt,y=0.75pt,yscale=-1,xscale=1,scale=0.7]
    %\path (0,843); %set diagram left start at 0, and has height of 843
    
    %Curve Lines
    \draw [color={rgb, 255:red, 0; green, 0; blue, 0 }  ,draw opacity=1 ]   (435.06,307.7) .. controls (422.06,282.7) and (401,243.67) .. (356.71,233.38) ;
    %Curve Lines
    \draw [color={rgb, 255:red, 0; green, 0; blue, 0 }  ,draw opacity=1 ]   (421,322.67) .. controls (411.65,292.9) and (387.98,253.9) .. (356.71,233.38) ;
    %Curve Lines
    \draw [color={rgb, 255:red, 0; green, 0; blue, 0 }  ,draw opacity=1 ]   (377,338.67) .. controls (380,299.67) and (372.02,281.64) .. (356.71,233.38) ;
    %Curve Lines
    \draw [color={rgb, 255:red, 0; green, 0; blue, 0 }  ,draw opacity=1 ]   (327.87,326.87) .. controls (337.07,309.67) and (353.07,270.87) .. (356.71,233.38) ;
    %Curve Lines
    \draw [color={rgb, 255:red, 0; green, 0; blue, 0 }  ,draw opacity=1 ]   (350.67,334.47) .. controls (359.47,305.27) and (362.27,282.87) .. (356.71,233.38) ;
    %Curve Lines
    \draw [color={rgb, 255:red, 0; green, 0; blue, 0 }  ,draw opacity=1 ]   (400.06,337.91) .. controls (398.06,309.91) and (382.31,265.9) .. (356.71,233.38) ;
    %Curve Lines
    \draw [color={rgb, 255:red, 208; green, 2; blue, 27 }  ,draw opacity=1 ][line width=1.5]    (272.89,349.89) .. controls (296.89,331.22) and (335.27,272.07) .. (356.71,233.38) ;
    %Curve Lines
    \draw [color={rgb, 255:red, 74; green, 144; blue, 226 }  ,draw opacity=1 ][line width=1.5]    (318.67,296.47) .. controls (374,324.8) and (413,303.53) .. (442.67,276.87) ;
    %Curve Lines
    \draw [color={rgb, 255:red, 208; green, 2; blue, 27 }  ,draw opacity=1 ][line width=1.5]    (476.06,324.7) .. controls (457.06,287.7) and (407.87,226.47) .. (356.71,233.38) ;
    %Circle
    \draw  [color={rgb, 255:red, 0; green, 220; blue, 20}  ,draw opacity=1 ][fill={rgb, 255:red, 0; green, 220; blue, 20}  ,fill opacity=1 ][line width=2]  (354.73,231.35) .. controls (355.85,230.26) and (357.65,230.28) .. (358.74,231.41) .. controls (359.83,232.53) and (359.81,234.32) .. (358.69,235.41) .. controls (357.57,236.5) and (355.78,236.48) .. (354.68,235.36) .. controls (353.59,234.24) and (353.61,232.45) .. (354.73,231.35) -- cycle;
    \end{tikzpicture}
    \captionsetup[figure]{width=.87\textwidth}
    \captionof{figure}{A complete transversal (blue) for $\mathcal{H}$ near a multiple equilibrium (green). Separatrices are colored red.}
    \label{fig:hetreg_left}
\end{minipage}
\hfill
\begin{minipage}{0.45\textwidth}
    \centering
    \tikzset{every picture/.style={line width=0.75pt}}
    \begin{tikzpicture}[x=0.75pt,y=0.75pt,yscale=-1,xscale=1,scale=0.7]
    %\path (0,843); %set diagram left start at 0, and has height of 843
    
    %Curve Lines
    \draw [color={rgb, 255:red, 0; green, 0; blue, 0 }  ,draw opacity=1 ]   (344.33,237.33) .. controls (327.33,213.33) and (294.33,192.33) .. (250.05,182.05) ;
    %Curve Lines
    \draw [color={rgb, 255:red, 0; green, 0; blue, 0 }  ,draw opacity=1 ]   (314.33,271.33) .. controls (304.33,241.33) and (275.33,203.33) .. (250.05,182.05) ;
    %Curve Lines
    \draw [color={rgb, 255:red, 0; green, 0; blue, 0 }  ,draw opacity=1 ]   (270.33,287.33) .. controls (273.33,248.33) and (265.36,230.3) .. (250.05,182.05) ;
    %Curve Lines
    \draw [color={rgb, 255:red, 0; green, 0; blue, 0 }  ,draw opacity=1 ]   (211.33,263.33) .. controls (230.33,245.33) and (240.33,218.33) .. (250.05,182.05) ;
    %Curve Lines
    \draw [color={rgb, 255:red, 0; green, 0; blue, 0 }  ,draw opacity=1 ]   (164.33,220.33) .. controls (211.33,205.33) and (210.33,199.33) .. (250.05,182.05) ;
    %Curve Lines
    \draw [color={rgb, 255:red, 0; green, 0; blue, 0 }  ,draw opacity=1 ]   (176.33,144.33) .. controls (201.33,161.33) and (204.33,163.33) .. (250.05,182.05) ;
    %Curve Lines
    \draw [color={rgb, 255:red, 0; green, 0; blue, 0 }  ,draw opacity=1 ]   (198.33,121.33) .. controls (233.33,142.33) and (237.31,151.36) .. (250.05,182.05) ;
    %Curve Lines
    \draw [color={rgb, 255:red, 0; green, 0; blue, 0 }  ,draw opacity=1 ]   (246.33,111.33) .. controls (254.33,137.33) and (254.33,144.33) .. (250.05,182.05) ;
    %Curve Lines
    \draw [color={rgb, 255:red, 0; green, 0; blue, 0 }  ,draw opacity=1 ]   (293.33,119.33) .. controls (290.33,141.33) and (279.36,165.36) .. (250.05,182.05) ;
    %Curve Lines
    \draw [color={rgb, 255:red, 0; green, 0; blue, 0 }  ,draw opacity=1 ]   (310.4,132.52) .. controls (299.4,152.52) and (282.4,169.52) .. (250.05,182.05) ;
    %Curve Lines
    \draw [color={rgb, 255:red, 208; green, 2; blue, 27 }  ,draw opacity=1 ][line width=1.5]    (351.83,162.33) .. controls (322.83,172.33) and (298.33,175.33) .. (250.05,182.05) ;
    %Curve Lines
    \draw [color={rgb, 255:red, 0; green, 0; blue, 0 }  ,draw opacity=1 ]   (348.4,208.52) .. controls (328.4,194.52) and (294.4,183.52) .. (250.05,182.05) ;
    %Curve Lines
    \draw [color={rgb, 255:red, 0; green, 0; blue, 0 }  ,draw opacity=1 ]   (167.4,174.52) .. controls (199.4,184.52) and (230.4,184.52) .. (250.05,182.05);
    %Curve Lines
    \draw [color={rgb, 255:red, 74; green, 144; blue, 226 }  ,draw opacity=1 ][line width=1.5]    (301.83,178.33) .. controls (302.83,191.33) and (306.83,213.83) .. (296.83,231.83) .. controls (286.83,249.83) and (269.83,268.83) .. (243.33,260.83) .. controls (216.83,252.83) and (188.83,196.83) .. (202.83,150.83) .. controls (216.83,104.83) and (277.4,108.52) .. (300.4,170.52) ;
    %Circle 
    \draw  [color={rgb, 255:red, 0; green, 220; blue, 20}  ,draw opacity=1 ][fill={rgb, 255:red, 0; green, 220; blue, 20}  ,fill opacity=1 ][line width=2]  (248.07,180.02) .. controls (249.19,178.93) and (250.98,178.95) .. (252.07,180.07) .. controls (253.17,181.19) and (253.14,182.99) .. (252.02,184.08) .. controls (250.9,185.17) and (249.11,185.15) .. (248.02,184.03) .. controls (246.92,182.91) and (246.95,181.11) .. (248.07,180.02) -- cycle ;
    \end{tikzpicture}
    \captionsetup[figure]{width=.87\textwidth}
    \captionof{figure}{A complete transversal (blue) for $\mathcal{H}$ near a node (green). The unbounded separatrix is colored red, cf. step (III).}
    \label{fig:hetreg_right}
\end{minipage} 

\begin{theorem}
    $\mathcal{H}$ is connected, path-connected and simply connected. $\mathcal{H}$ is a strip canonical region.
\end{theorem}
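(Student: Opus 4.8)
The plan is to run the argument through the global transversal for $\mathcal{H}$ whose existence was established in the discussion (I)--(III) immediately above; call this open smooth arc $T^{*}$, so that every orbit in $\mathcal{H}$ meets $T^{*}$ in exactly one point. The first observation I would record is that every orbit $\Gamma(x)$ with $x\in\mathcal{H}$ is complete, i.e.\ $\Phi(\cdot,x)$ is defined on all of $\R$: since $w_+(\Gamma(x))=\{a_+\}$ and $w_-(\Gamma(x))=\{a_-\}$ are nonempty, the maximal interval of existence cannot be bounded at either end (on a bounded maximal half-interval the solution leaves every compact set, forcing the corresponding limit set to be empty). Consequently there is a well-defined \emph{crossing-time} map $\tau\colon\mathcal{H}\to\R$ determined by $\Phi(\tau(x),x)\in T^{*}$, and $\tau$ is continuous: this follows from transversality of $T^{*}$ to $F$ via the flow-box theorem together with continuity of the flow, cf.\ \cite[Chapter 2.4, Theorem 4]{perko2013differential}, while well-definedness and uniqueness of $\tau(x)$ are exactly the global transversal property.

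Next I would form $H\colon\mathcal{H}\times[0,1]\to\mathcal{H}$, $H(x,s):=\Phi\bigl(s\,\tau(x),x\bigr)$. By the completeness just noted, continuity of $\tau$ and of the flow, and flow-invariance of $\mathcal{H}$ (Proposition \ref{prop:hetreg_invariant}), $H$ is a well-defined continuous map with $H(\cdot,0)=\mathrm{id}_{\mathcal{H}}$, $H(x,1)\in T^{*}$ for all $x$, and $H(p,s)=p$ for $p\in T^{*}$ because $\tau(p)=0$; hence $H$ is a deformation retraction of $\mathcal{H}$ onto $T^{*}$. Equivalently, $(p,t)\mapsto\Phi(t,p)$ is a homeomorphism $T^{*}\times\R\xrightarrow{\sim}\mathcal{H}$, so in fact $\mathcal{H}\cong\R^{2}$. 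Since the open arc $T^{*}$ is contractible, $\mathcal{H}$ is homotopy equivalent to a point and therefore connected, path-connected and simply connected.

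For the final assertion I would proceed exactly as in the corresponding corollary for global elliptic sectors. By Theorem \ref{thm:hetreg_open} the set $\mathcal{H}$ is open, so all of its orbits belong to a strip canonical region $R$ in the sense of Markus and Neumann: the flow-box neighbourhoods furnished by $T^{*}$ show that $\mathcal{H}$ contains no separatrices, and the presence of non-periodic (heteroclinic) orbits together with the transversal arc $T^{*}$ — rather than a transversal circle — excludes the annular (and, via the uniform heteroclinic behaviour, the spiral and radial) types. It remains to see $R=\mathcal{H}$; here I would use that every orbit heteroclinic between $a_+$ and $a_-$ already lies in $\mathcal{H}$ by definition while $\mathcal{H}\cap\partial\mathcal{H}=\emptyset$, so $\partial\mathcal{H}$ contains no such orbit, and combine this with Proposition \ref{prop:hetreg_equilibria_on_boundary} and Theorem \ref{thm:hetreg_unbounded} — whereby $\partial\mathcal{H}$ consists solely of $\{a_+,a_-\}$ and unbounded orbits — to conclude $\partial\mathcal{H}\cap R=\emptyset$; since $R$ and $\mathcal{H}$ are both open, connected and nested, this forces $R=\mathcal{H}$. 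The claimed strip canonical region structure (together with its global transversal, which is $T^{*}$) then follows from \cite[Theorem 5.1]{markus1954global}, \cite[Theorem I]{markus1954globalintegrals} and \cite{neumann1976global}.

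The step I expect to be the main obstacle is the very last one: showing that the ambient strip canonical region $R$ cannot be strictly larger than $\mathcal{H}$. This amounts to a parallelism statement — that orbits of $R$ accumulating on $\partial\mathcal{H}$ would have to inherit the heteroclinic behaviour of the orbits of $\mathcal{H}$ — which goes beyond the elementary arguments used so far and relies on the structure theory of canonical regions of Markus and Neumann (one can, if desired, instead deduce simple connectedness directly from the deformation retraction, as above, and quote the classification only for the canonical-region terminology). Everything preceding it is routine once the global transversal of items (I)--(III) is taken as given.
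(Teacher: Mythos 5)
Your proposal is correct and takes essentially the same route as the paper: both rest on the global transversal furnished by (I)--(III), parametrize $\mathcal{H}$ homeomorphically by transversal $\times$ time via the flow (the paper's map $\mu(s,t)=\Phi(t,T(s))$, your $T^{*}\times\R\cong\mathcal{H}$), deduce connectedness, path-connectedness and simple connectedness from this product structure, and then invoke Markus's theorems for the strip canonical region statement. The step you single out as the main obstacle is handled no more elaborately in the paper, which simply observes that openness of $\mathcal{H}$ rules out separatrices inside it and concludes directly from the cited classification results, exactly in the spirit of the earlier corollary identifying a global elliptic sector with its containing strip region.
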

\begin{proof}
We use our above considerations in (I)--(III). Let $T:[0,1]\to\mathcal{H}$ be the smooth global transversal through $\mathcal{H}$ and define $Q:=\R{}\times[0,1]\subset\R{2}$. Then the map $\mu:Q\to\mathcal{H}$, $\mu(s,t):=\Phi(t,T(s))$ is a homeomorphism. Moreover, by openness of $\mathcal{H}$, there are no separatrices in $\mathcal{H}$. Since $Q$ is simply connected, this theorem is a direct consequence of \cite[Theorem I]{markus1954globalintegrals} and \cite[Theorem 5.1]{markus1954global}.
\end{proof}

This leads to an interesting observation: global elliptic sectors and heteroclinic regions are both strip regions and thus topologically equivalent in the sense of \cite{markus1954global}. In order to distinguish these two sets we need information about the boundary orbits (separatrices). This underlines that the behavior of separatrices is crucial for determining the global phase portrait of \eqref{eq:planarODE}.

\subsection{Illustrating example}

Several examples of basins, sectors, and regions can be found in \cite{broughan2003holomorphic,broughan2003structure,kainz2024local}. Another example has already been discussed, cf. Example \ref{ex:three_nodes}. The vector field $F(x)=x^3-x$ has two heteroclinic regions: the left and right open half-planes of $\C{}\setminus\{x\in\R{}:|x|>1\}$. These heteroclinic regions share $\mathrm{i}\R{}$ (one equilibrium and two separatrices) as their common boundary. We now turn to a more complicated example to illustrate the theory developed in the preceding chapters.
%many examples are illustrating topological properties of the basins of equilibria and global elliptic sectors. \cite[Example 4.8, Figure 3]{kainz2024local} demonstrates a heteroclinic region between the two equilibria. The following example shows that there can be more than one heteroclinic region between two global elliptic sectors.
\begin{example}
    We consider the entire vector field $F:\C{}\to\C{}$ with
    \begin{equation*}
        F(x):=x^2(x-1)(x-\mathrm{i})(x-1-\mathrm{i})\text{.}
    \end{equation*}
    
   \begin{figure}[ht]
       \centering
        \includegraphics[clip,scale=0.75]{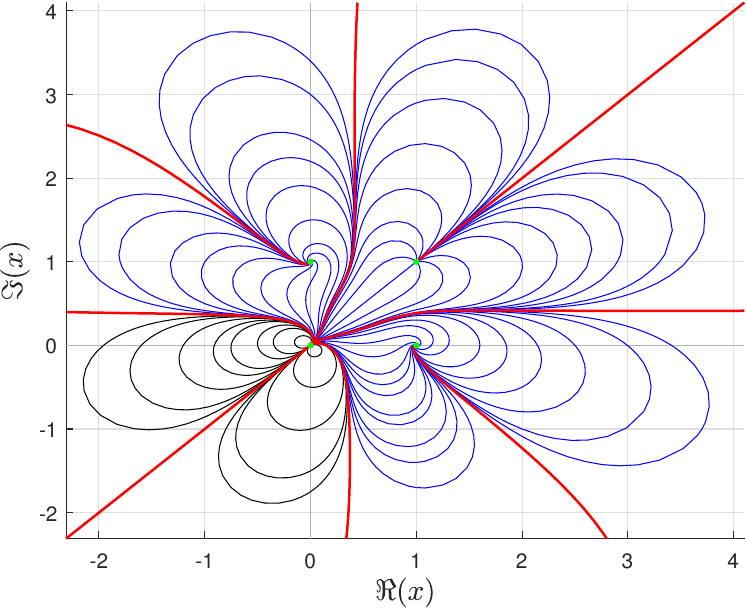}
       \caption{Local phase portrait of system \eqref{eq:planarODE} with $F(x)=x^2(x-1)(x-\mathrm{i})(x-1-\mathrm{i})$, plotted with Matlab.}
   \end{figure}

    We have $F^{-1}(\{0\})=\{0,1,\mathrm{i},1+\mathrm{i}\}$ (colored green). In particular, $a_1=0$ is an equilibrium of order $2$, $a_2=1+\mathrm{i}$ is an attracting node and $a_3=1$ as well as $a_4=\mathrm{i}$ are attracting foci. We observe three heteroclinic regions (colored blue) lying between the two global elliptic sectors: the first between $a_1$ and $a_2$, the second between $a_1$ and $a_3$, and the third between $a_1$ and $a_4$. The red trajectories are the separatrices. In sum, we see five strip canonical regions. We can verify all the topological properties proven in this chapter as well as Corollary \ref{cor:hetreg_no_cycle_with_heteroclinic_regions}.
\end{example}

\newpage

In this context, the question arises as to how many heteroclinic regions can lie between two global elliptic sectors. Finally, we propose the following conjecture, for which a general example is still lacking.

\begin{conj}
    For all $n,m\in\mathbb{N}$ there exists a complex polynomial $F_{n,m}$ of degree $n+m$ such that \eqref{eq:planarODE} has an equilibrium $a$ of order $m$ and $n$ foci or nodes $a_1,\ldots,a_n$ with the following properties:
    \begin{itemize}
        \item[(i)] Either all equilibria $a_1,\ldots,a_n$ are stable or all are unstable.
        \item[(ii)] For all $j\in\{1,\ldots,n\}$ there exists a heteroclinic region between $a$ and $a_j$.
        \item[(iii)] All heteroclinic orbits of \eqref{eq:planarODE} tend to $a$ in the same definite direction.
    \end{itemize}
\end{conj}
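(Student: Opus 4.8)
The plan is to produce $F_{n,m}$ explicitly in the factored form
\[
F_{n,m}(x)=c\,(x-a)^{m}\prod_{j=1}^{n}(x-a_j),\qquad a:=0,
\]
which is in fact the most general degree-$(n+m)$ polynomial whose only multiple zero is an order-$m$ zero at $a$, so that the remaining zeros $a_1,\dots,a_n$ are automatically simple. With this ansatz the statement becomes a problem about the $2n+1$ real parameters $\arg c$ and $a_1,\dots,a_n\in\C\setminus\{0\}$: each $a_j$ is a node or focus iff $\Re F_{n,m}'(a_j)=\Re\!\bigl(c\,a_j^{m}\prod_{k\ne j}(a_j-a_k)\bigr)\ne 0$, and it is stable resp.\ unstable according to the sign of this real number, so (i) is the open demand that these $n$ numbers share a common sign. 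Since the admissible approach directions at $a$ form the set $\mathcal{E}(F_{n,m},m)$ and $\lambda$ distinguishes outgoing from incoming directions, I would first normalize $\arg c$ so that $F_{n,m}^{(m)}(0)=c\,m!\prod_{j}(-a_j)>0$; then $0\in\mathcal{E}(F_{n,m},m)$ with $\lambda(0)=1$, so $\theta_0:=0$ is one of the $m-1$ outgoing definite directions, and (iii) becomes the requirement that every heteroclinic orbit leave $a$ precisely along $\theta_0$.

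The core idea for (ii) and (iii) is a \emph{barrier argument}: I would arrange the phase portrait near $a$ so that, of the $2m-2$ local elliptic sectors of the FED at $a$, all but the two adjacent to $\theta_0$ survive globally as genuine global elliptic sectors. By Theorems \ref{thm:globalSector_open}--\ref{thm:globalSector_unbounded} these surviving sectors are open, unbounded, flow-invariant strips, hence barriers across which no orbit can pass; consequently every orbit that is not homoclinic to $a$ must run through one of the two broken sectors, and — by the purely local description of the incoming cones in \cite[Proposition 4.3 (iii)]{kainz2024local} — any such orbit that limits on $a$ does so along $\theta_0$, which already forces (iii). To obtain (ii) I would build the $n$ heteroclinic connections one at a time by ``pulling a zero in from infinity'': starting from the base case $n=1$, where for $F_{1,m}(x)=c\,x^m(x-a_1)$ the flow near and between $0$ and $a_1$ can be analyzed directly (the FED at $0$ is an explicit rotation of that of $x\mapsto x^m$, an adjacent sector opens, and a single heteroclinic region anchored at $\theta_0$ appears), I would set $F_{n,m}=\kappa\,(x-a_n)F_{n-1,m}$ and let $|a_n|\to\infty$ along a suitable ray while readjusting $\kappa$ and $\arg c$ (taking $\kappa(-a_n)>0$, which simultaneously preserves $F^{(m)}_{n,m}(0)>0$ and the stability of the old $a_j$) so that on every bounded set $F_{n,m}$ converges, up to a positive real factor, to $F_{n-1,m}$. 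By continuity of the flow the $n-1$ heteroclinic regions already anchored at $\theta_0$ and the surviving global elliptic sectors of $0$ persist, $a_n$ can be kept a stable node, and — since $a$ is the only equilibrium any new orbit can emanate from — the open, unbounded basin of $a_n$ (Theorem \ref{thm:nf_unbounded}) must reach back to $a$, producing one further heteroclinic region which the barriers confine to the broken sector at $\theta_0$. Finally, Corollary \ref{cor:hetreg_no_cycle_with_heteroclinic_regions} guarantees the $n$ heteroclinic regions cannot close into a cycle, so they stack as nested strips all anchored at $\theta_0$, giving (ii) together with (iii), while the repeated adjustment of $\arg c$ preserves (i).

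The main obstacle is the global content of the inductive step, not the local bookkeeping. One must show that \emph{adding} the zero $a_n$ genuinely creates a heteroclinic orbit back to $a$ — i.e.\ that the basin of $a_n$ reaches through the broken sector rather than escaping to infinity without ever meeting an equilibrium — and simultaneously that it destroys neither an earlier heteroclinic region nor the integrity of a surviving global elliptic sector of $a$. Because multiplying by $(x-a_n)$ is a large, not a small, perturbation of the vector field, the convergence ``$F_{n,m}\to(\text{const})\cdot F_{n-1,m}$'' controls the flow only on compact subsets of $\C$, so one has to track the heteroclinic orbits and the separatrices bounding the surviving elliptic sectors \emph{uniformly out to infinity}; this is exactly where the holomorphy of $F$ must be exploited decisively, via Theorem \ref{thm:central_tool}, to exclude limit cycles and to pin down all limit sets along the way — and it is the step for which, as the discussion preceding this conjecture notes, a fully general construction is still missing.
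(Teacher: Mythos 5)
The statement you are trying to prove is stated in the paper as a \emph{conjecture}; the authors explicitly note that a general example is still lacking, so there is no proof in the paper to compare against, and your proposal must stand on its own. It does not: as you yourself concede in the final paragraph, the entire global content of the inductive step is left open, and that step is not a technicality but the whole assertion. Concretely, the claim that ``since $a$ is the only equilibrium any new orbit can emanate from, the open, unbounded basin of $a_n$ must reach back to $a$'' is unjustified: orbits in the basin of a node or focus may be unbounded in negative time with empty negative limit set (this happens already in the paper's Example \ref{ex:three_nodes}, where the two halves of the imaginary axis lie in the basin of $0$ and emanate from no equilibrium), so nothing forces even a single heteroclinic orbit from $a$ to $a_n$ to exist after the zero is pulled in from infinity, let alone the persistence of the $n-1$ previously constructed heteroclinic regions under what is, as you note, a non-small perturbation controlled only on compact sets.

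A second, more structural problem is the ``barrier'' premise itself. You propose to arrange that ``all but the two sectors adjacent to $\theta_0$ survive globally,'' but by Corollary \ref{cor:globalSector_final_corrollaries_1} \emph{all} $2m-2$ global elliptic sectors exist for every entire $F$ with an order-$m$ equilibrium; there are no ``broken'' sectors to play the role you assign them. Moreover the global elliptic sectors consist exclusively of homoclinic orbits and need not share boundaries near $a$, so between any two adjacent sectors there is in general an angular gap through which a heteroclinic orbit can approach $a$ along \emph{any} of the $m-1$ outgoing definite directions. Hence the sectors do not act as barriers funnelling every heteroclinic connection into the single direction $\theta_0$, and condition (iii) --- which is exactly the delicate part of the conjecture --- is not secured by your normalization of $\arg c$ together with openness/unboundedness of the sectors. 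The local bookkeeping (the factored ansatz, the stability criterion via $\Re F'(a_j)$, the normalization putting $\theta_0=0$ with $\lambda(0)=1$) is fine, but the two global claims above are precisely where a proof would have to do real work, and they are missing.
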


\appendix
\section{}

\begin{proof}[Proof of Proposition \ref{prop:globalSector_interior}]
    Let $x\in\mathcal{S}$. Suppose there exists $z_1\in\Int(\Gamma(x)\cup\{a\})$ such that $z_1\not\in\mathcal{S}$. Then there exists $z_2\in\partial\mathcal{S}\cap\Int(\Gamma(x)\cup\{a\})\not=\emptyset$. Since the case $a=z_2$ is not possible, $\Gamma(z_2)$ has to be unbounded, cf. Theorem \ref{thm:globalSector_unbounded}. But $\Gamma(z_2)$ lies in the bounded set $\Int(\Gamma(x)\cup\{a\})$, which is a contradiction. Hence we have $\Int(\Gamma(x)\cup\{a\})\subset\mathcal{S}$. Additionally, by applying Proposition \ref{prop:globalSector_invariant}, we get
    \begin{align*}
        \overline{\Int(\Gamma(x)\cup\{a\})}\setminus\{a\}=\Int(\Gamma(x)\cup\{a\})\cup\Gamma(x)\subset\mathcal{S}\quad\forall\,x\in\mathcal{S}\text{.}
    \end{align*}
    By Theorem \ref{thm:globalSector_open}, for all $x\in\mathcal{S}$ we can choose $y_x\in\Ext(\Gamma(x)\cup\{a\})\cap\mathcal{S}$ such that $\Gamma(x)\subset\Int(\Gamma(y_x)\cup\{a\})$. It follows
     \begin{align*}
        \overline{\Int(\Gamma(x)\cup\{a\})}\setminus\{a\}\subset\Int(\Gamma(y_x)\cup\{a\})\subset\mathcal{S}\quad\forall\,x\in\mathcal{S}\text{.}
    \end{align*}
    These considerations show both equations in (\ref{eq:globalSector_interior}).
    
    Suppose there exist $y_1,y_2\in\mathcal{S}$, $\Gamma(y_1)\not=\Gamma(y_2)$, such that $\Gamma(y_1)$ and $\Gamma(y_2)$ are mutually exterior. 
    
    At first, assume that $\{y_1,y_2\}\cap\Ext(\Xi\cup\{a\})\not=\emptyset$. To fix ideas, assume w.l.o.g. $y_1\in\Ext(\Xi\cup\{a\})$. We have $\Xi\subset\Int(\Gamma(y_1)\cup\{a\})$. Since $\Gamma(y_2)\subset\Ext(\Gamma(y_1)\cup\{a\})$, we get $y_2\in\Ext(\Xi\cup\{a\})$ and thus $\Xi\subset\Int(\Gamma(y_1)\cup\{a\})\subset\Ext(\Gamma(y_2)\cup\{a\})$. It follows the contradiction $y_2\not\in\mathcal{S}$.
    
    Secondly, we assume $\{y_1,y_2\}\subset\Int(\Xi\cup\{a\})$. By using \eqref{eq:globalSector_interior}, we have $\Int(\Gamma(y_1)\cup\{a\})\subset\mathcal{S}$, i.e. all orbits in $\Int(\Gamma(y_1)\cup\{a\})$ are homoclinic in $a$ and $\hat{\Xi}:=\Gamma(y_1)$ is another sector-forming orbit. Define $\hat{\mathcal{S}}:=\mathcal{S}(\hat{\Xi})$. Since $\hat{\Xi}\subset\Int(\Xi\cup\{a\})$, we get $\Xi\subset\hat{\mathcal{S}}$. By using \eqref{eq:globalSector_interior}, we conclude $y_2\in\Int(\Xi\cup\{a\})\subset\hat{\mathcal{S}}$. Additionally, since $y_2\in\Ext(\hat{\Xi}\cup\{a\})$, we must have $\Gamma(y_1)=\hat{\Xi}\subset\Int(\Gamma(y_2)\cup\{a\})$, which is a contradiction.
    
    The last possible case is $\{y_1,y_2\}\cap\Xi\not=\emptyset$. To fix ideas, we assume w.l.o.g. $\Xi=\Gamma(y_1)$. By Definition \ref{def:globalSector}, we must have either $y_2\in\Int(\Xi\cup\{a\})=\Int(\Gamma(y_1)\cup\{a\})$ or $\Gamma(y_1)=\Xi\subset\Int(\Gamma(y_2)\cup\{a\})$. Both results lead to a contradiction.
    %
    %Finally, $y_1$ and $y_2$ cannot exist and we conclude that all orbits in $\mathcal{S}$ must be nested.
\end{proof}

\begin{proof}[Proof of Proposition \ref{prop:nf_no_isolated_equilibria}]
	Suppose there exists an isolated point $\tilde{a}\in\partial\mathcal{N}\cap F^{-1}(\{0\})$, i.e. there exists $\rho>0$ such that
	\begin{align*}
		\partial\mathcal{N}\cap(\underbrace{\mathcal{B}_\rho(\tilde{a})\setminus\{\tilde{a}\}}_{=:A})=\left(\mathcal{B}_\rho(\tilde{a})\cap\partial\mathcal{N}\right)\setminus\{\tilde{a}\}=\emptyset\text{.}
	\end{align*}
	It follows\footnote{The simple but technical proof of $\hat{\partial}(\mathcal{N}\cap A)\subset\partial\mathcal{N}\cap A=\emptyset$, where $\hat{\partial}$ denotes the boundary with respect to the subspace topology on $A$, is omitted.} that $\mathcal{N}\cap A\not=\emptyset$ is open and closed with respect to the subspace topology on $A$. Since the punctured circle $A$ is connected, we must have $\mathcal{N}\cap A=A$. Hence, since $\tilde{a}\not\in\mathcal{N}$, we conclude
	\begin{align}\label{eq:nf_isolated1}
		\mathcal{N}\cap\mathcal{B}_\rho(\tilde{a})=\mathcal{B}_\rho(\tilde{a})\setminus\{\tilde{a}\}\text{.}
	\end{align}
	If $\tilde{a}$ was of order $m\ge2$, then, by \cite[Theorem 4.4]{kainz2024local}, there would be at least one homoclinic orbit in $A$ tending to $\tilde{a}$ in both time directions, i.e. equation \eqref{eq:nf_isolated1} would not be fulfilled and $\tilde{a}$ would not be isolated. Thus, $\tilde{a}$ must be a node or focus, cf. Lemma \ref{lem:no_center_on_boundary_of_invariant_sets}. Denote by $\tilde{\mathcal{N}}$ the basin of $F$ in $\tilde{a}$. We reduce $\rho$ such that $\mathcal{B}_\rho(\tilde{a})\subset\tilde{\mathcal{N}}$ and $\mathcal{B}_\rho(a)\subset\mathcal{N}$. By \eqref{eq:nf_isolated1} and Proposition \ref{prop:nf_invariant}, we have $\tilde{\mathcal{N}}\subset\mathcal{N}$.
	
	By applying the theory of circles without contact around nodes and foci, cf. \cite[\S3, 10.-14., \S7, 1.-2. and \S18, Lemma 3]{andronov1973qualitativetheory}, we find two continuously differentiable closed paths $C_1\subset\mathcal{B}_\rho(a)$ and $C_2\subset\mathcal{B}_\rho(\tilde{a})$, $C_1\cap C_2=\emptyset$, being nowhere tangential to $F$ and satisfying $\Int(C_1)\cap F^{-1}(\{0\})=\{a\}$ and $\Int(C_2)\cap F^{-1}(\{0\})=\{\tilde{a}\}$.\footnote{In particular, from the equations (6) and (11) in \cite[\S7, 1.]{andronov1973qualitativetheory} and the remarks made in \cite[\S7, 2.]{andronov1973qualitativetheory} it follows that $C_1$ and $C_2$ can be chosen as linear transformed ellipse.} It holds $\overline{\Int(C_1)}\cap\overline{\Int(C_2)}=\emptyset$. Moreover, every orbit in $\mathcal{N}$ ($\tilde{\mathcal{N}}$) crosses $C_1$ ($C_2$) exactly once, cf. \cite[\S3, 10., Figure 54]{andronov1973qualitativetheory}. We equip $C_1$ and $C_2$ with the subspace topology. Using this construction, the map $\Psi:\mathcal{N}\setminus\{a\}\to\R{}$ is well-defined, when $\Psi(x)$ is the transit time\footnote{cf. \cite[Definition 3.2]{broughan2003structure}.} the point $x\in\mathcal{N}\setminus\{a\}$ needs to $C_1$ along the path $\Gamma(x)$, i.e. $\{\Phi(\Psi(x),x)\}=\Gamma(x)\cap C_1$. Moreover, we define $\eta:C_2\to C_1$ by $\eta(x):=\Phi(\Psi(x),x)$. By the continuous dependence of the flow on initial conditions, cf. \cite[Chapter 2.4, Theorem 4]{perko2013differential}, and the continuous dependence of the transit time integral on the integral limits, $\Psi$ and $\eta$ are both continuous.
	
	One the one hand, since $C_2$ is compact, also $\eta(C_2)$ is compact in $C_1$, cf. \cite[Theorem 26.5]{munkres2000topology}. Thus, $\eta(C_2)$ is closed in $C_1$, cf. \cite[Theorem 26.3]{munkres2000topology}. On the other hand, by Proposition \ref{prop:nf_invariant}, we have $\eta(C_2)\subset\tilde{\mathcal{N}}\cap C_1$. Additionally, since all orbits through $\tilde{\mathcal{N}}\cap C_1\subset\tilde{\mathcal{N}}$ have to cross $C_2$, we also have $\tilde{\mathcal{N}}\cap C_1\subset\eta(C_2)$, i.e. $\eta(C_2)=\tilde{\mathcal{N}}\cap C_1$. Moreover, by Theorem \ref{thm:nf_open}, $\tilde{\mathcal{N}}\cap C_1$ is open in $C_1$. We conclude that $\eta(C_2)\not=\emptyset$ is open and closed in $C_1$. Since $C_1$ is connected, we must have $C_1=\eta(C_2)$, i.e. $\eta$ is surjective. Let $y_1,y_2\in C_2$ be arbitrary. Assume $\eta(y_1)=\eta(y_2)$. Then, by definition of $\eta$, $\Gamma(y_1)=\Gamma(\eta(y_1))=\Gamma(\eta(y_2))=\Gamma(y_2)$. Since all orbits in $\tilde{\mathcal{N}}$ cross $C_2$ exactly once, we must have $y_1=y_2$. Hence, $\eta$ is also injective. It follows that $\eta$ is bijective and thus a homeomorphism, cf. \cite[Theorem 26.6]{munkres2000topology}. We conclude that every orbit in $\mathcal{N}$ and $\tilde{\mathcal{N}}$ crosses $C_1$ as well as $C_2$ exactly once, i.e. we get
	\begin{align}\label{eq:nf_isolated2}
		\hspace{-7mm}\mathcal{H}:=\mathcal{N}\cap \tilde{\mathcal{N}}=\mathcal{N}\setminus\{a\}=\tilde{\mathcal{N}}\setminus\{\tilde{a}\}=\bigcup\limits_{\mathclap{x_2\in C_2}}\Gamma(x_2)=\bigcup\limits_{\mathclap{x_1\in C_1}}\Gamma(x_1)\text{.}
	\end{align}
	Since $C_1\subset\mathcal{H}$ is a Jordan curve, there exists a homeomorphism $h:C_1\to S^1$. Equip the cylinder surface $B:=\R{}\times S^1\subset\R{3}$ with the subspace topology and define the map $\zeta:B\to\mathcal{H}$ by $\zeta(t,x):=\Phi(t,h^{-1}(x))$. By \eqref{eq:nf_isolated2}, we have $\zeta(B)=\mathcal{H}$, i.e. $\zeta$ is surjective. Let $(t_1,x_1),(t_2,x_2)\in B$ be arbitrary and assume $\zeta(t_1,x_1)=\zeta(t_2,x_2)$. Then $\{h^{-1}(x_1),h^{-1}(x_2)\}\subset C_1$ and
	\begin{align*}
		\Gamma(h^{-1}(x_1))=\Gamma(\zeta(t_1,x_1))=\Gamma(\zeta(t_2,x_2))=\Gamma(h^{-1}(x_2))\text{.}
	\end{align*}
	Since all orbits in $\mathcal{H}$ cross $C_1$ exactly once, we get $h^{-1}(x_1)=h^{-1}(x_2)$. Since $h$ is bijective, we conclude $x_1=x_2$. This gives us $\Phi(t_1,h^{-1}(x_1))=\Phi(t_2,h^{-1}(x_1))$ and thus $t_1=t_2$, since $\Gamma(h^{-1}(x_1))$ is not periodic. It follows that $\zeta$ is also injective and thus bijective. Hence, there exists the inverse $\zeta^{-1}$ of $\zeta$, which is given by $\zeta^{-1}(x):=(-\Psi(x),h(\Phi(\Psi(x),x)))$, $x\in\mathcal{H}$. In fact, we easily check that the equation $\zeta(\zeta^{-1}(x))=x$ holds for all $x\in\mathcal{H}$. As a composition of continuous functions, $\zeta$ as well as $\zeta^{-1}$ are continuous and therefore $\zeta$ is a homeomorphism.
	
	By \cite[Corollary 52.5]{munkres2000topology}, the fundamental group of $\mathcal{H}$ is isomorphic to $B$. Additionally, as the circle line $S^1\subset\R{2}$ is a deformation retract of the cylinder surface $B$, we conclude that the fundamental group of $\mathcal{H}$ is isomorphic to the additive group $\mathbb{Z}$, cf. \cite[Theorem 54.5]{munkres2000topology} and \cite[Theorem 58.3]{munkres2000topology}. But $\mathcal{H}$ has the two holes $a$ and $\tilde{a}$, which is impossible for a set having a fundamental group isomorphic to $\mathbb{Z}$. In particular, by using \eqref{eq:nf_isolated2}, we can choose the loop $C_2\cup\Phi([0,\Psi(z_0)],z_0)\cup C_1\subset\mathcal{H}$ with an arbitrary point $z_0\in C_2$. This loop has $a\not\in\mathcal{H}$ as well as $\tilde{a}\not\in\mathcal{H}$ in its interior and thus cannot be transferred to a loop on $S^1$. All in all, we derive a contradiction and $\tilde{a}$ cannot be isolated on $\partial\mathcal{N}$.
	
	Moreover, all equilibria $\hat{a}\in\partial\mathcal{N}\cap F^{-1}(\{0\})$ are nodes, foci or equilibria of order $m\ge2$, cf. Lemma \ref{lem:no_center_on_boundary_of_invariant_sets}. Thus, we can choose $\hat{\rho}$ such that for all $\xi\in\mathcal{B}_{\hat{\rho}}(\hat{a})$ there holds $\hat{a}\in w_+(\Gamma(\xi))\cup w_-(\Gamma(\xi))$, cf. \cite[Definition 3.1]{kainz2024local} and \cite[Theorem 4.4]{kainz2024local}. If we choose $\xi\in(\mathcal{B}_{\hat{\rho}}(\hat{a})\cap\partial\mathcal{N})\setminus\{\hat{a}\}\not=\emptyset$, then $\Gamma:=\Gamma(\xi)\subset\mathcal{N}$ satisfies $\hat{a}\in w_+(\Gamma)\cup w_-(\Gamma)$, i.e. $\hat{a}$ is attached to $\Gamma$.
\end{proof}

\bibliographystyle{plain}

\end{document}